\renewenvironment{abstract}
{\begin{center}
		\textbf{Abstract}
	\end{center}
	\list{}{ 
		\setlength{\leftmargin}{0.05\textwidth}
		\setlength{\rightmargin}{\leftmargin}
	}
	\item\relax} 
{\endlist}
\newenvironment{keywords}
{\begin{trivlist}\item[]{\bfseries Keywords.}}
	{\end{trivlist}}
\newenvironment{subclass}
{\begin{trivlist}\item[]{\bfseries Mathematical Subject Classification:}}
	{\end{trivlist}}
\newenvironment{acknowledgements}
{\begin{trivlist}\item[]{\bfseries Acknowledgements:}}
	{\end{trivlist}}
\numberwithin{equation}{section}
\theoremstyle{plain} 
\newtheorem{thm}{Theorem}[section] 
\newtheorem{cor}[thm]{Corollary}
\newtheorem{lem}[thm]{Lemma}
\newtheorem{pro}[thm]{Proposition}
\theoremstyle{definition}
\newtheorem{rem}[thm]{Remark}
\newcommand{\C}{\mathbb{C}}
\newcommand{\N}{\mathbb{N}}
\newcommand{\R}{\mathbb{R}}
\newcommand{\Z}{\mathbb{Z}}
\newcommand{\F}{\mathbb{F}}
\newcommand{\T}{\mathbb{T}}
\newcommand{\D}{\mathbb{D}}
\newcommand\restr[2]{{
		\left.\kern-\nulldelimiterspace 
		#1 
		\vphantom{\big|} 
		\right|_{#2} 
}}
\newcommand{\norm}[2][]{\left\|#2\right\|_{#1}}
\newcommand{\semnorm}[2][]{\left[#2\right]_{#1}}
\newcommand{\Lpnorm}[2][]{\ifthenelse{\equal{#1}{}}{\norm{#2}_{L^p}}{\norm{#2}_{L^p(#1)}}}
\newcommand{\Hknorm}[2][]{\ifthenelse{\equal{#1}{}}{\norm{#2}_{H^k}}{\norm{#2}_{H^k(#1)}}}
\newcommand{\sgn}{\mathrm{sgn}}
\renewcommand{\Re}{\text{\normalfont Re}}
\newcommand{\divergence}{\text{\normalfont div}}
\newcommand{\QV}[2][]{\ifthenelse{\equal{#1}{}}{\langle #1 \rangle}{\langle #1,#2 \rangle}}
\newcommand{\ind}{\mathbbm{1}}
\newcommand{\Lin}{\mathscr{L}}
\newcounter{author}
\renewcommand*\author[1]{%
	\stepcounter{author}%
	\ifnum\c@author=1
	\gdef\@author{#1}%
	\else
	\xdef\@author{\unexpanded\expandafter{\@author\and#1}}%
	\fi
	\csgdef{author@\the\c@author}{#1}}
\newcommand*\email[1]{%
	\csgdef{email@\the\c@author}{#1}}
\newcommand*\address[1]{%
	\csgdef{address@\the\c@author}{#1}}
	\xdef\author@count{\the\c@author}%
\newcommand*\print@authors{%
	\ifnum\c@author>\author@count
	\else
	\print@author{\the\c@author}%
	\advance\c@author by 1
	\expandafter\print@authors
	\fi}
\newcommand*\print@author[1]{%
	\par\medskip
	\begin{tabular}{@{}l@{}}%
		\textsc{\csuse{author@#1}}\\
		\csuse{address@#1}\\
		\textit{E-mail address}: \csuse{email@#1}
\end{tabular}}
\title{\textrm{\textbf{\Large Rotating solutions to the incompressible Euler-Poisson equation with external particle}}}
\author{\large Diego Alonso-Or\'an}
\address{Universidad de La Laguna, Departamento de An\'alisis Matem\'atico \\ C/Astrof\'isico Francisco S\'anchez s/n \\ 38271 La Laguna \\ SPAIN}
\email{dalonsoo@ull.edu.es}
\author{\large Bernhard Kepka}
\address{University of Bonn, Institute for Applied Mathematics \\ Endenicher Allee 60 \\ D-53115 Bonn \\ GERMANY}
\email{kepka@iam.uni-bonn.de}
\author{\large Juan J. L. Vel\'{a}zquez}
\address{University of Bonn, Institute for Applied Mathematics \\ Endenicher Allee 60 \\ D-53115 Bonn \\ GERMANY}
\email{velazquez@iam.uni-bonn.de}
\date{\normalsize \today}
\begin{document}
	\maketitle
	
	\begin{abstract}
		We consider a two-dimensional, incompressible fluid body, together with self-induced interactions. The body is perturbed by an external particle with small mass. The whole configuration rotates uniformly around the common center of mass. We construct solutions, which are stationary in a rotating coordinate system, using perturbative methods. In addition, we consider a large class of internal motions of the fluid. The angular velocity is related to the position of the external particle and is chosen to satisfy a non-resonance condition.
		
		\begin{keywords}
			Euler-Poisson equation; external particle; free-boundary problem. 
		\end{keywords}
		\begin{subclass}
			35Q35, 35Q31, 76B07.
		\end{subclass}
		
		\begin{acknowledgements}
			D. Alonso-Or\'{a}n is supported by the Spanish MINECO \linebreak through Juan de la Cierva fellowship FJC2020-046032-I. B. Kepka and J. J. L. Vel\'azquez gratefully acknowledge the support by the Deutsche Forschungsgemeinschaft (DFG) through the collaborative research centre The mathematics of emerging effects (CRC 1060, Project-ID 211504053). B. Kepka is funded by the Bonn International Graduate School of Mathematics at
			the Hausdorff Center for Mathematics (EXC 2047/1, Project-ID 390685813). J. J. L. Vel\'azquez is funded by the DFG under Germany's Excellence Strategy-EXC2047/1-390685813.
		\end{acknowledgements}
	\end{abstract}
	\tableofcontents
	
	\section{Introduction and previous results}
	The shape of fluid objects due to the combination of rotational and self-gravitating forces is a classical research field which has been extensively considered for different fluid models. In particular, a detailed description of the historical evolution of the field can be found \cite{Chandrasekhar1969} for the (three-dimensional) incompressible Euler equations. Further results were established by Lichtenstein \cite{Lichtenstein1918UntersuchungenI}. For the case of compressible fluids we refer to the works \cite{AuchmutyBeals1971ModelsRotatingStars,ChanilloLi1994OnDiametersOfUniformly,Heilig1994Lichtenstein,JangMakino2017SlowlyRotatingAxissymmetricSol,JangMakino2019RotatingAxisymmetricSol,JangSeok2022RotatingBinaryStarsGalaxies,Li1991OnUniformlyRotatingStars,Lichtenstein1933UntersuchungenIII,LuoSmoller2004RotatingFluids,LuoSmoller2009ExistenceNonlinearStabilityRotatingStar,StraussWu2017SteadyStatesRotatingStarsGalaxies,StraussWu2019RapidlyRotatingStars} and references therein. A kinetic model, namely the Vlasov-Poisson equation, has been studied as well, see e.g. \cite{Dolbeault2008LoclaizedMinimizersFlatRotating,JangSeok2022RotatingBinaryStarsGalaxies}. In fact, there is a relation between steady states of the Vlasov-Poisson equation and the compressible Euler equation, see \cite{Rein2007} and references therein for an overview of the variational methods used in these problems.
	
	In this paper, we consider a two-dimensional, self-interacting, incompressible fluid body modeled by the Euler equations. Furthermore, we study the problem of deformations of the geometry when it is perturbed by some external particle. The fluid body and the external particle are assumed to rotate around their center of mass. This problem (adding a small particle) can be seen as a test of stability of the rotating solutions and also as a simple model of tides. Furthermore, differently from the results reviewed in \cite{Chandrasekhar1969} (excluding the figures studied by Riemann), we construct solutions of the Euler-Poisson equation for which the fluid velocity is in general different from zero in any coordinate system.
	
	In this work, we study a family of interaction potentials including the classical (Newtonian) gravitational forces. The latter can be interpreted as an extremely simplified model for galaxies. However, this does not correspond to a three-dimensional problem restricted to planar geometries. The reason being that the pressure would necessary act only in the plane which contains the fluid body as well as the external particle. Nevertheless, such a model can be considered in the case of the Vlasov-Poisson equation, assuming that the velocities of the particles are contained only in the same plane as the fluid. In this situation the tensor describing the pressure is anisotropic and it yields zero forces in the direction perpendicular to the plane but not in the horizontal direction, cf. \cite{Rein1999FlatSteadyStates}. 

	Since we consider a two-dimensional fluid body we can apply two tools that cannot be employed in three dimensional problems. Specifically, we use conformal mappings as well as the Grad-Shafranov method \cite{Grad1967Plasma,Safra}.

    Beside the problem treated here, a variety of different free-boundary problems arising in fluid mechanics have been studied in the last decades. For instance, the problem of jets and cavities with or without gravity has been studied in \cite{AltCaffarellFriedman1982AsymJetFlows,AltCaffarellFriedman1982JetGravity,AltCaffarellFriedman1982AxiallySymmJetFlows} and the theory of gravity water waves has been developed in several works, cf. \cite{IonescuPusateri, Sijue1, Sijue2}. An important difference between the previous free-boundary problems and the one studied in this paper is that the interacting force (e.g. gravity) is due to the fluid itself.  Another type of problems that have some similarities with the one considered in this article are those related to the theory of rotating vortex patches. The first rigorous result was shown by Burbea \cite{burbea1982motions} where he constructed rotating vortex patches close to the disk by means of the classical Crandall-Rabinowitch bifurcation approach. A more thorough study of rotating vortex patches can be found in \cite{hassainia2020global,hmidi2013boundary} and the references therein.
    
	\subsection{Setting of the problem}
	We are concerned with a flat incompressible fluid body with density $\rho=\ind_E $. Here, $ \ind_E $ denotes the indicator function of the set $ E $. The shape of the body $ E(t)\subset \R^2 $ has a smooth boundary and is close to a disk, see below for the precise meaning of this. We also include a particle $ X=X(t) \in \R^2 $ with small mass $ m $. However, we consider only situations in which the particle and the fluid body are at a positive distance. The velocity field $ v $ of the fluid body then satisfies the following free-boundary problem for the Euler-Poisson system
	\begin{align}\label{eq:IncompEulerPoissonPrev}
		\begin{cases}
			\partial_t v + (v\cdot \nabla) v = -\nabla p - \nabla U_{E(t)} -m\nabla U_{X(t)}  & \text{in } E(t),
			\\
			\nabla\cdot v=0 & \text{in } E(t), \\
			n\cdot v= V_N & \text{on } \partial E(t), \\
		\end{cases}
	\end{align}
	where $ V_N $ is the normal velocity of the interface $ \partial E(t) $ and $ n $ the outer unit normal vector of $ \partial E(t) $. Here, $ U_{E(t)} $ and $ U_{X(t)} $ are the gravitational potentials, see below for the precise definitions.  Furthermore, $ p=p(t,x) $ is the scalar pressure which describes the internal pressure of the body for $ x\in E(t) $ and the external pressure of the surrounding space for $ x\in \R^2\backslash E(t)$. We assume the external pressure to be constant on $\R^2\backslash E(t) $ and without of generality we can take this constant to be zero. This reflects that the configuration is surrounded by a uniform medium. Therefore, the continuity of the pressure at the interface that separates the liquid from the exterior implies that
    \begin{align}\label{ext:pressure}
		p=0 \quad \text{on } \partial E(t).
	\end{align}

    Since there are no external forces acting on the configuration described by the fluid body and the external particle, their common center of mass moves at constant speed. Consequently, we can assume without loss of generality (using a change of the coordinate system) that the center of mass is at zero, i.e.
	\begin{align}\label{eq:mass:particle:center}
		\int_{E(t)}x\, dx +mX(t) = 0.
	\end{align} 
	
	As mentioned in the introduction we study two cases for the potentials $ U_{E(t)} $ and $ U_{X(t)} $ in \eqref{eq:IncompEulerPoissonPrev}.
	\begin{enumerate}[(A)]
		\item\label{CaseA} We consider a family of power law potentials, more precisely for $ \nu\in (0,1] $ we define
		\begin{align}\label{potentials3D}
			U_{X(t)}(x):= -\dfrac{1}{|x-X(t)|^\nu}, \quad U_ {E(t)}(x) := -\int_{E(t)} \dfrac{dy}{|x-y|^\nu}.
		\end{align} 
		\item\label{CaseB} We consider potentials given via the fundamental solution of the (two-dimensional) Laplace operator, i.e.
		\begin{align}\label{potentials2D}
			U_{X(t)}(x):= \ln|x-X(t)|, \quad U_ {E(t)}(x) := \int_{E(t)} \ln|x-y|\, dy.
		\end{align}
	\end{enumerate}
	Note that in both cases the signs are chosen to yield attractive forces. Furthermore, Case \eqref{CaseA} with $ \nu=1 $ can be interpreted as Newtonian gravitational interactions.
	
	Let us mention here that in Case \eqref{CaseA} with $ \nu=1 $ some care is needed in order to define a solution to \eqref{eq:IncompEulerPoissonPrev} since the gradient $ \nabla U_{E(t)} $ is not well-defined due to the onset of a singularity. However, this does not suppose a problem since the pressure gradient $\nabla p$ has also a similar singularity with a reverse sign that compensates the singularity of $\nabla U_{E(t)}$. In order to avoid this singular terms, it is convenient to rewrite the problem \eqref{eq:IncompEulerPoissonPrev} substracting the hydrostatic pressure. To this end, we define $ p = P-U_{E(t)}-mU_{X(t)}$ where $P$ is the non-hydrostatic pressure. Then the system \eqref{eq:IncompEulerPoissonPrev} turns into 
	\begin{align}\label{eq:IncompEulerPoissonPrev2}
		\begin{cases}
			\partial_t v + (v\cdot \nabla) v = -\nabla P  & \text{in } E(t),
			\\
			\nabla\cdot v=0 & \text{in } E(t), \\
			n\cdot v= V_N & \text{on } \partial E(t), \\
			P=U_{E(t)}+mU_{X(t)} & \text{on } \partial E(t),
		\end{cases}
	\end{align}
	where the last equation follows from \eqref{ext:pressure}. Now, these equations do not contain singular terms.
	
	The solutions to \eqref{eq:IncompEulerPoissonPrev2} studied in this paper are classical solutions, i.e. $ v:E(t)\to \R^2 $ and $ \partial E(t) $ are regular. However, the function $ P:\overline{E(t)}\to \R $ is in general only continuous, i.e. in Case \eqref{CaseA} the gradient $ \nabla P $ is not defined on $ \partial E(t) $. As we will see in the next section, this condition of continuity of the pressure and the last equation in \eqref{eq:IncompEulerPoissonPrev2} yields an equation for the free-boundary.
	
	Furthermore, the solutions constructed in this paper occur as perturbations of solutions to the time-independent equation with $ m=0 $, that is
	\begin{align}\label{eq:UnperIncompEulerPoisson}
		\begin{cases}
			(v\cdot \nabla) v = -\nabla P & \text{in } E,
			\\
			\nabla\cdot v=0 & \text{in } E, \\
			n\cdot v= 0 & \text{on } \partial E
			\\
			P = U_{E}  & \text{on } \partial E.
		\end{cases}
	\end{align}
	One particular solution we consider is given by the unit disk $ E=\D $ together with a corresponding velocity field $ v $ and the non-hydrostatic pressure $ P $.
	
	In addition, we assume that the perturbed fluid body and the external particle solving \eqref{eq:IncompEulerPoissonPrev} rotate around their center of mass with angular speed of rotation $ \Omega_0>0 $. Furthermore, we look for configurations which are time-independent in a rotating frame at angular speed $ \Omega_0 $, see Figure \ref{fig:picture}. Changing to such a rotating coordinate system we obtain the equations
	\begin{align}\label{eq:IncompEulerPoisson}
		\begin{split}
			\begin{cases}
				(v\cdot \nabla) v + 2\Omega_0  Jv - \Omega_0^2 x = -\nabla P & \text{in } E
				\\
				\nabla \cdot v=0 & \text{in } E
				\\
				n\cdot v=0 & \text{on } \partial E
				\\
				P=U_E+mU_X & \text{on } \partial E
				\\
				\Omega_0^2 X = \nabla U_E(X)
				\\
				|E|=\pi
				\\
				\int_{E}x\, dx +mX=0.
			\end{cases}
		\end{split}
	\end{align}
	In equations \eqref{eq:IncompEulerPoisson} we used the matrix $ J $ defined by
	\begin{align*}
		J = \left( \begin{matrix}
			0 & -1 \\ 1 & 0
		\end{matrix}  \right)
	\end{align*}
	which encodes the action of the vector product in the two-dimensional case.
	
	\begin{figure}[ht]
		\centering
		\includegraphics[width=0.7\linewidth]{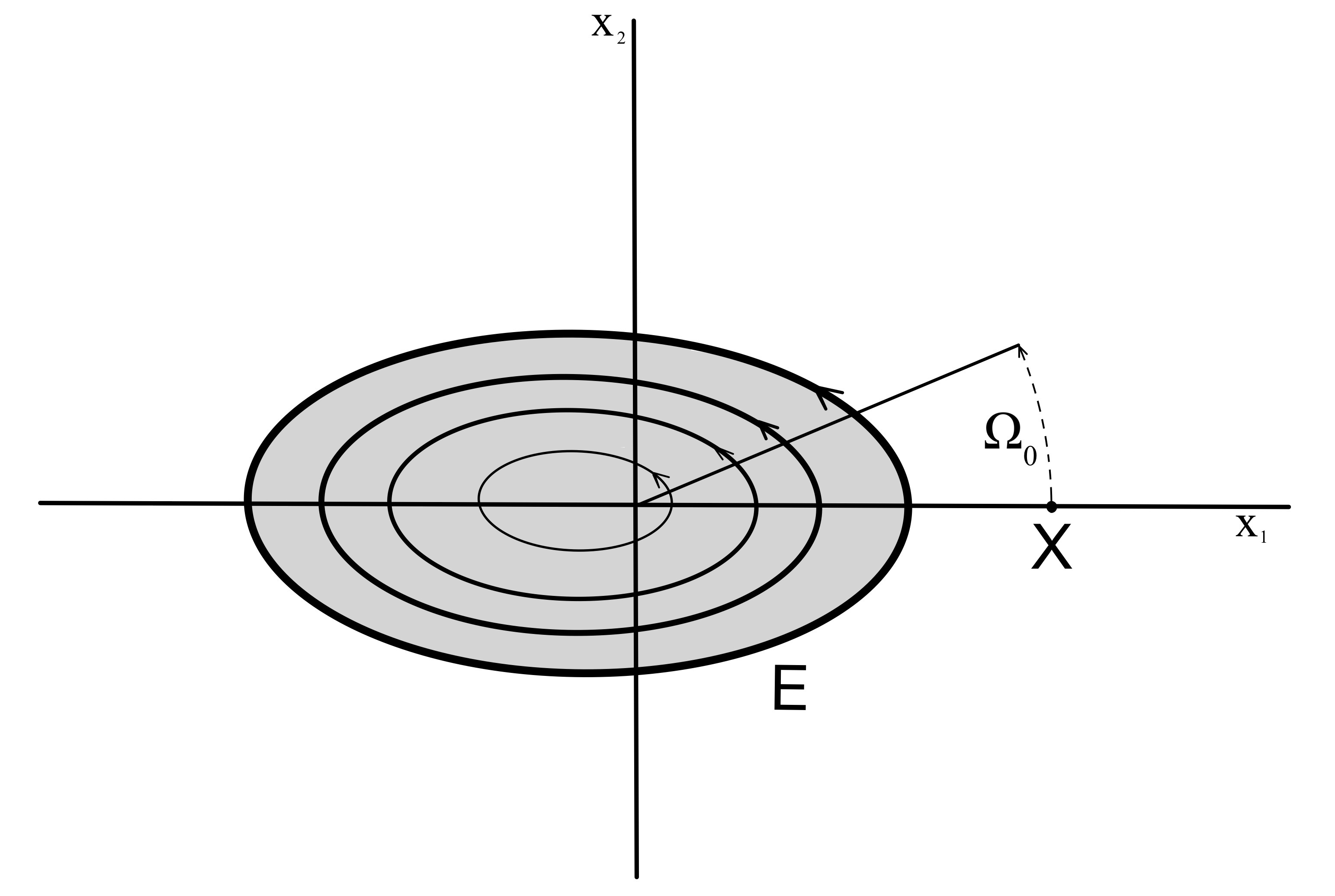}
		\caption{Configuration of the fluid body $ E $ and the external particle $ X $. Both rotate around their common center of mass (at the origin) with angular speed $ \Omega_0 $.}
		\label{fig:picture}
	\end{figure}
	
	Notice that in this setting, the shape of the body $ E $, the velocity field $ v $ and the position of the particle $ X $ do not depend on time. Furthermore, we construct solutions $ v\neq0 $, which can be interpreted as some type of tidal waves induced by the gravity of the external particle as well as the velocity of the unperturbed fluid.
	
	We briefly comment on the system of equations \eqref{eq:IncompEulerPoisson}. First, note that the terms $ 2\Omega_0  Jv $ and $ - \Omega_0^2 x $ represent the Coriolis and the centrifugal forces, respectively, which appear in the rotating frame of reference. The third equation in \eqref{eq:IncompEulerPoisson} ensures that the free-boundary is stationary, i.e. the fluid inside the body does not move across the boundary. As stated above, the external pressure is assumed to be constant outside the body. The equation $ \Omega_0^2 X = \nabla U_E(X) $ follows from Newton's law and ensures that the external particle is at rest. Note that $ \nabla U_E(X) $ is now well-defined also in Case \eqref{CaseA}, since we consider only cases with $ X $ separated from $ E $. The centrifugal force acting on $ X $ balances with the gravitational force of the fluid body. In addition, for definiteness, we assume that the total mass of the fluid is $ \pi=|\D| $. The last equation in \eqref{eq:IncompEulerPoisson} ensures that the center of mass is at the origin.
	
	Finally, let us mention that equations \eqref{eq:IncompEulerPoisson} are invariant under rotations around the origin. Hence, we can assume w.l.o.g. that the particle $ X=(a,0) $ is located on the $ x_1 $-axis. In particular, a solution to \eqref{eq:IncompEulerPoisson} yields a family of solutions by applying rotations.
	
	In this paper, we construct solutions to \eqref{eq:IncompEulerPoisson} obtained as perturbation of solutions to \eqref{eq:UnperIncompEulerPoisson} with $E=\D$ by means of an implicit function theorem in H\"older spaces. We require a non-resonance condition on $ \Omega_0 $ and a non-degeneracy condition on the unperturbed velocity field solving \eqref{eq:UnperIncompEulerPoisson}, see Theorem \ref{thm:MainThm} and Corollary \ref{cor:MainResult}.
	
	The paper is organized as follows. In Section \ref{sec:Reformulation} we reformulate the problem using Grad-Shafranov, the Bernoulli equation and conformal mappings to derive a reduced system of equations that will be more amenable to mathematical analysis. These new system is solved using an implicit function theorem. To this end, we provide some preliminary results concerning conformal mapping properties, estimates for elliptic equations, as well as suitable representations of the gravitational potentials in Section \ref{sec:preliminar}. In Section \ref{sec:Frechet} we prove the Fr\'{e}chet differentiability of the reduced system of equations w.r.t. the unknowns of the problem. Furthermore, we prove the invertibility of the Fr\'{e}chet derivative at the unperturbed solution in Section \ref{sec:inver}. Finally, we conclude the article with the proof of the main results in Section \ref{sec:main:proof}.
	
	\section{Reformulation of the problem and main result}\label{sec:Reformulation}
	In this section, we reduce the problem \eqref{eq:IncompEulerPoisson} to a set of equations that will be studied in the main part of the paper. To this end, we apply in particular conformal mappings as well as the Grad-Shafranov method.
	
	\subsubsection*{Conformal mappings}
	We use conformal mappings, i.e. bijective analytic functions, to parameterize the domain of the fluid. Recall that by the Riemann mapping theorem for any simply connected domain $ E\subset \C $ one can find a conformal mapping $ f:\D\to E $. Here, we identify $ \C $ with $ \R^2 $ via $ z=x_1+ix_2 $. In the case of smooth domains the mapping extends conformally to $ \overline{\D}\to \overline{E} $. 
	
	In our study, we consider conformal mappings of the form $ f_h:\D\to \R^2 $, $ f_h(z)=z+h(z) $, where $ h $ is small such that the domain is close to the disk. Let us mention that under a general smallness condition on some arbitrary analytic function $ h:\D\to \C $ the mapping $ f_h $ is conformal, see Lemma \ref{lem:ConformalMapping}. We denote the corresponding domain by $ E_h=f_h(\D) $ to emphasize the dependence on $ h $. Accordingly, we use the notation $ U_h=U_{E_h} $. Furthermore, we denote by $ f_h' $ the complex derivative, i.e. understanding $ f_h $ as a mapping $ \D\subset \C\to \C $.
	
	Let us also introduce the so-called Blaschke factors, see \cite{Rudin1987ComplexAna}, defined by
	\begin{align}\label{eq:Blaschke}
		b_{c,d}(z) = d \dfrac{z-c}{1-\overline{c}z}, \quad c\in \D, d\in \C, |d|=1.
	\end{align}
	These factors are the only conformal mappings $ \D\to \D $. Choosing $ c,\, d $ accordingly allows to set $ h(0)=0 $ and $ h'(0)\in \R $ by replacing $ f_h $ by $ f_h\circ b_{c,d} $. This defines the conformal mapping $ f_h $ and hence also $ h $ uniquely.
	
	\subsubsection*{Grad-Shafranov method}
	In order to construct the velocity field $ v $ solving \eqref{eq:IncompEulerPoisson} we use the Grad-Shafranov method. For convenience we recall the approach of this tool. Let us denote by $\omega=\nabla\times v$ the vorticity of the velocity field $ v $. Here, we identified the velocity fields $ v=(v_1,v_2) $ and $ (v_1,v_2,0) $ for the vector product. Furthermore, we use the notation $ \Omega = (0,0,\Omega_0) $. Accordingly, the vorticity is a vector field, which has the form $ \omega=(0,0,\omega_3) $. We will abuse notation by writing $ \omega $ for both the vector field and its only non-zero component $ \omega_3 $.
	
	By applying the vector identity $ -\omega Jv = v\times \omega=-(v\cdot \nabla)v+\frac{1}{2}\nabla(|v|^{2}) $ we infer that the first three equations in \eqref{eq:IncompEulerPoisson} can be written as
	\begin{align} \label{eq:VorticityFormulation}
		\begin{cases}
			-(\omega+ 2\Omega_0) J v=  v\times (\omega+2\Omega) = \nabla H, & \text{in } E_h 
			\\
			\nabla \cdot v=0, & \text{in } E_h \\
			n_h\cdot v=0, & \text{on } \partial E_h.
		\end{cases}
	\end{align}
	Here, $ H $ is called the Bernoulli head and is defined by
	\begin{align*}
		H:= P+ \dfrac{1}{2}|v|^2 - \dfrac{\Omega_0^2}{2}|x|^2.
	\end{align*}
	The term $ 2\Omega_0 $ can be interpreted as the vorticity of the velocity field $ \Omega \times x = \Omega_0 Jx $ which occurs in terms of the Coriolis force due to the rotating frame of reference. Applying the operator $ \nabla\times $ to the first equation in \eqref{eq:VorticityFormulation} and using that $ \nabla\cdot v=\nabla\cdot \omega=0 $ yields
	\begin{align*}
		(v\cdot \nabla) (\omega+2\Omega)=0.
	\end{align*}
	Let us remark that this identity holds in general only in two dimensions, which restricts the Grad-Shafranov method to these situations. As a corollary of the above identity we obtain that $ \omega+2\Omega_0 $ and thus $ \omega $ is constant along stream lines (characteristics) of $ v $.
	
	The main object in the Grad-Shafranov approach is the stream function $ \psi:E_h\to \R $ satisfying $ v=\nabla^\perp \psi:=J\nabla \psi =  (-\partial_{x_2}\psi,\partial_{x_1}\psi) $. Let us mention that in general the existence of a function $ \psi $ is ensured in two dimensions by the divergence-free condition on $ v $ as well as topological properties on $ E_h $. Here, $ E_h $ is simply connected.
	
	Now, with the stream function at hand, we can write $ \omega = \Delta \psi $. Since $ \psi $ is also constant along the characteristics of $ v=J\nabla \psi $, one might conclude the existence of a function $ G:\R\to \R $ such that $ \Delta \psi = G(\psi) $. Let us remark here that in general the existence of $ G $ can be concluded only locally when $ \nabla \psi \neq 0 $. Furthermore the function $ G $ might be multi-valued, a situation, although interesting we will not consider in this paper. In addition, we require $ n_h\cdot v = 0 $ on $ \partial E_h $, and thus 
	\begin{align*}
		0=n_h\cdot J\nabla\psi= \tau_h\cdot \nabla\psi,
	\end{align*}
	where $ \tau_h $ is the positively-oriented tangential vector on $ \partial E_h $. We integrate along the boundary to get for $ x\in \partial E_h $ that $ \psi(x) = c_0 $ for some constant $ c_0\in\R $. Note that the potential $ \psi $ is given up to a constant, so we can choose $ c_0=0 $ by adapting the function $ G $ if needed. Thus, the stream function solves the equation
	\begin{align}\label{eq:StreamFunction}
		\begin{cases}
			\Delta \psi = G\left( \psi \right)  & \text{in } E_h,
			\\
			\psi = 0 & \text{on } \partial E_h,
		\end{cases}
	\end{align}
	In the Grad-Shafranov approach the above reasoning is reversed in the sense that we are given some (regular enough) function $ G $ and we construct the stream function (hence also the velocity field) by solving the equation \eqref{eq:StreamFunction}.
	
	Note that $ \psi:E_h\to \R $ is a function of $ h $, so that we sometimes write $ \psi_h $ if we want to emphasize the dependence in $h$. Let us also remark that the existence and uniqueness of solutions to \eqref{eq:StreamFunction} is ensured in general by assuming that $ G $ is non-decreasing, see Lemma \ref{lem:WellPosedStreamFunction}.
	
	We now use the conformal mapping in order to reduce equation \eqref{eq:StreamFunction} to the domain $ \D $. We set $ \phi_h:=\psi_h \circ f_h $, which is now defined on the disk $ \phi_{h}:\D\to \R $. The corresponding equation reads
	\begin{align}\label{eq:StreamFunctionDisk}
		\begin{cases}
			\Delta \phi_h = |f_h'|^2G\left( \phi_h \right)  & \text{in } \D,
			\\
			\phi_h = 0 & \text{on } \partial\D.
		\end{cases}
	\end{align}	
	
	\subsubsection*{Equation of the free-boundary} 
	The equation determining the free-boundary can be derived from the fact that the non-hydrostatic pressure $ P $ is continuous along the free-boundary. We can write using the stream function $ v=\nabla^\perp \psi_h $
	\begin{align*}
		v\times (\omega+2\Omega) = -(G(\psi_h)+2\Omega_0) J\nabla^\perp\psi_h =  (G(\psi_h)+2\Omega_0) \nabla\psi_h, \quad \text{in } E_{h}.
	\end{align*}
	We conclude that 
	\begin{align*}
		v\times (\omega+2\Omega) = \nabla \left[  F\left( \psi_h \right) \right] , \quad F(\psi_h)\mid_{\partial E_h} = 0, 
	\end{align*}
	where $ F'=G+2\Omega_0 $ is a primitive with $ F(0)=0 $. Consequently, in order to ensure equality in the first equation in \eqref{eq:VorticityFormulation} the non-hydrostatic pressure is given (up to a constant $ \lambda $) by
	\begin{align}\label{eq:ReconstructionPressure}
		P = F(\psi_h) - \dfrac{1}{2}|\nabla \psi_h|^2 + \dfrac{\Omega_0^2}{2}|x|^2 +\lambda \quad \text{in } E_h.
	\end{align}
	The condition that $ P $ is continuous along the free-boundary yields with $ P=U_h+mU_X $ on $ \partial E_h $ and $ F(\psi_h)\mid_{\partial E_h} = 0 $ the equation
	\begin{align}\label{eq:FreeBoundaryEq}
		\dfrac{1}{2}|\nabla \psi_h|^2 - \dfrac{\Omega_0^2}{2}|x|^2 +U_h +mU_{X} = \lambda \quad \text{on } \partial E_h.
	\end{align}
	The evaluation at the boundary $ \partial E_h=f_h(\partial \D) $ in \eqref{eq:FreeBoundaryEq} can be performed using the conformal mapping $ f_h $. We now summarize the reduced system that we aim to solve in our study
	\begin{align}\label{eq:FullSystem}
		\begin{cases}
			\dfrac{1}{2}\dfrac{|\nabla\phi_h|^2}{|f_h'|^{2}} - \dfrac{\Omega_0^2}{2}|f_h|^2 + U_h\circ f_h + mU_{X}\circ f_h  = \lambda & \text{on } \partial \D,
			\\
			\Delta \phi_h = |f_h'|^2G\left( \phi_h \right)  & \text{in } \D,
			\\
			\phi_h = 0 & \text{on } \partial\D.
			\\
			\Omega_0^2 a = \partial_{x_1} U_h(X)
			\\
			|E_h|=\pi.
		\end{cases}
	\end{align}
	Recall that the position of the particle is chosen as $ X=(a,0) $. The unknown triplet is $ (h,a,\lambda) $. As we will see, cf. Corollary \ref{cor:MainResult}, solutions of \eqref{eq:FullSystem} constructed in this paper yield solutions to \eqref{eq:IncompEulerPoisson}. Let us mention that the fourth equation in \eqref{eq:FullSystem} is the $x_{1}$-component of Newton equation for the particle $ X $, see also the fifth equation in \eqref{eq:IncompEulerPoisson}. The other component follows, as we will see in Corollary~\ref{cor:MainResult}, by the symmetry of the domain $ E $ w.r.t. the $ x_1 $-axis.
	
	\subsubsection*{Solution for $ m=0 $} 
	In the case when no external particle is present, i.e. $ m=0 $, we assume that the fluid body has the shape of a disk $ \D $. Furthermore, we consider a velocity field on $ \D $ with stream function $ \phi_0 $ solving
	\begin{align}\label{eq:UnperturbedStreamFunction}
		\begin{cases}
			\Delta \phi_0 = G\left( \phi_0 \right)  & \text{in } \D,
			\\
			\phi_0 =0 & \text{on } \partial \D.
		\end{cases}
	\end{align}
	Note that this coincides with \eqref{eq:StreamFunctionDisk} for $ h=0 $. Observe that due to the rotational invariance $ \phi_0=\phi_0(|x|)$ the equation reduces to the ODE
	\begin{align*}
		\dfrac{1}{r}\left( r\phi_0'\right)' = G\left(\phi_0(r)\right) , \quad \phi_0(1)=0.
	\end{align*} 
	This ODE is complemented with the condition that $ \lim_{r\to0}\phi_0(r) $ exists. Therefore, the velocity field becomes $ v(x)=\frac{\phi_0'(|x|)}{|x|}Jx $. It describes a non-uniform rotation with angular speed depending on the distance to the center. Since the velocity field is rotationally symmetric, the velocity in the non-rotating coordinate system is given by $ (\frac{\phi_0'(|x|)}{|x|}+\Omega_0)Jx $. Furthermore, note that the function $ \phi_0 $ can be extended to $ r>1 $. This is necessary, for instance, when evaluating $ \phi_0 $ on the boundary $ \partial E_h $, which is close to $ \partial \D $.
	
	The position of the unperturbed particle is chosen of the form $ X_0=(a_0,0) $. Since we consider only cases for which the fluid body and the external particle are strictly separated, we assume say $ a_0\geq2 $. Hence, $ E_h $ does not contain $ X\approx X_0 $ for small enough $ h $. The Newton equation for the particle requires that
	\begin{align*}
		\Omega_0^2X_0 = \nabla U_0(X_0).
	\end{align*}
	Further information of the potentials $ U_0 $ of the disk in both Case \eqref{CaseA} and Case \eqref{CaseB} are given in Lemma \ref{lem:UnperturbedPotential} and \ref{lem:PropertiesUnpertPot}. For $ a_0>1 $ we have $ U'_0(a_0)>0 $ and furthermore $ U'_0(a_0)/a_0\to 0 $ as $ a_0\to \infty $. In addition $ a_0\mapsto U'_0(a_0)/a_0 $ is strictly decreasing for $ a_0>1 $. Hence, there is a one-to-one correspondence between $ \Omega_0\in (0,\sqrt{U_0'(1)}] $ and $ a_0\geq1 $ via
	\begin{align}\label{eq:RelationPotentialParticlePosition}
		\Omega_0 = \sqrt{\dfrac{U_0'(a_0)}{a_0}}.
	\end{align}
	All in all, this defines a map $ \Omega_0\mapsto a_0(\Omega_0) $. Finally, the constant in \eqref{eq:FreeBoundaryEq} is given by $ \lambda_0= \frac{1}{2}\phi_0'(1)^2-\frac{1}{2}\Omega_0^2+U_0(1) $.
	
	\subsection{Notation}
	We will use the following notation throughout the manuscript.
	\begin{itemize}
		\item We use $\D$ to denote the unit disk with boundary $\partial \D$ and $\mathbb{T}=[0,2\pi]$ the $
		2\pi$-periodic torus with endpoints identified.  
		\item 
		The H\"{o}lder seminorm of a function $ u:\T\to \R $ or $ u:\D\to \R $ is defined by
		\begin{align*}
			\semnorm[k,\alpha]{u} &= \sup_{x_1\neq x_2}\dfrac{|u^{(k)}(x_2)-u^{(k)}(x_1)|}{|x_2-x_1|^\alpha}, \quad \alpha\in (0,1),
			\\
			\semnorm[k,0]{u} &= \norm[\infty]{u^{(k)}}, \quad \alpha=0.
		\end{align*}
		\item We abbreviate $ H^{k,\alpha}:=H^{k,\alpha}(\D):=H(\D)\cap C^{k,\alpha}(\overline{\D}) $, where $ H(\D) $ is the space of analytic functions on $\D $ and $ k\in \N_0 $, $ \alpha\in (0,1) $. We equip it with the standard H\"{o}lder norm $ \norm[k,\alpha]{\cdot} $.
		\item We denote by $ H^{k,\alpha}_0\subset H^{k,\alpha} $ the subspace of analytic functions $ h $ such that $ h(0)=0 $ and $ h'(0)\in \R $.
		\item Furthermore, the Fourier coefficients of a function $ g:\T\to \R $ are given by
		\begin{align*}
			\hat{g}_n = \dfrac{1}{2\pi} \int_0^{2\pi}g(\varphi) e^{in\varphi}\, d\varphi.
		\end{align*}
		Recall that $ \hat{g}_n = \overline{\hat{g}_{-n}} $, since $ g $ is real-valued.
		\item We denote by $ C_0^{k,\alpha}(\T)\subset C^{k,\alpha}(\T) $ those functions $ g $ with zero average, i.e. $ \hat{g}_0=0 $.
		\item Let us abbreviate with $ B_{r}=B_{r}(0)\subset H_0^{k,\alpha} $ the ball of radius $ r $ around zero. 
		\item We will denote with $C$ a positive generic constant that depends only on fixed parameters including $ \Omega_0 $ and norms of the function $ G $ in \eqref{eq:FullSystem}. Note also that this constant might differ from line to line.
	\end{itemize}
	
	\subsection{Main result and strategy towards the proof}
	In order to construct the desired solution, we make use of the implicit function theorem, cf. Lemma \ref{lem:IFT}. To do so, let us introduce the following functional spaces
	\begin{align}\label{eq:DefSpaces}
		\mathbb{X}^{k+2,\alpha}:=H_0^{k+2,\alpha}(\D)\times\R\times \R,
		\quad
		\mathbb{Z}^{k+1,\alpha}:= C^{k+1,\alpha}(\T)\times\R\times \R.
	\end{align}
	We define the following function related to the system \eqref{eq:FullSystem}. Define the map $ \F: U\times V\to \mathbb{Z}^{k+1,\alpha} $, where $ U\subset H_0^{k+2,\alpha}(\D)\times\R\times \R $, $ V\subset \R $, with $ X=(a,0) $,
	\begin{align}\label{eq:FullFunction}
		\F(h,a,\lambda,m) = \left( \begin{matrix}
			\restr{\left[ \dfrac{1}{2}\dfrac{|\nabla\phi_h|^2}{|f_h'|^{2}} - \dfrac{\Omega_0^2}{2}|f_h|^2 + U_h\circ f_h + mU_{X}\circ f_h  - \lambda \right]}{z=e^{i\varphi}}
			\\
			\Omega_0^2a-\partial_{x_1} U_h(X) 
			\\
			|f_h(\D)| -\pi
		\end{matrix}\right) 
	\end{align}

	The subset $ U $ is a sufficiently small neighborhood of $ (0,a_0,\lambda_0) $. In particular, it ensures that $ h $ defines a conformal mapping $ f_h(z)=z+h(z) $, see Lemma \ref{lem:ConformalMapping}. 
	
	Our goal is to solve the equation $ \F(h,a,\lambda,m)=0 $ via the implicit function theorem. To this end, we study the Fr\'echet derivative at the point $ (0,a_0,\lambda_0,0) $. We will apply a Fourier decomposition for the first component of $ \F $, which is a function on the torus $ \T $. As we will see, cf. Lemma \ref{lem:LinearizationZero}, the corresponding linear operator can be diagonalized and the Fourier multipliers have the form
	\begin{align}\label{eq:FourierMultipliers}
		\omega_n = -\dfrac{1}{2}\Omega_0^2 -\dfrac{1}{2}\phi_0'(1)^2(|n|+1)+\phi_0'(1)A_{|n|}'(1) (|n|+1)+c_{|n|}.
	\end{align} 
	The coefficients $ \omega_n $ are visible in a non-resonance condition for $ \Omega_0 $ in our main result, cf. Theorem \ref{thm:MainThm}. In the definition of $ \omega_n $ the function $ \phi_0 $ is the unperturbed stream function for $ m=0 $. The coefficients $ c_n $ enter through the interaction potential $ h\mapsto (U_h\circ f_h)(e^{i\varphi}) $. In Case \eqref{CaseA}, they are given by (note we identify again $ \R^2\backsimeq \C $)
	\begin{align}\label{eq:GravPotLinCoeffA}
		c_n = \dfrac{1}{2}\int_{\D} \left( \nu\dfrac{1-y^{n+1}}{1-y}-2(n+1)y^{n} \right) \dfrac{dy}{|1-y|^\nu},
	\end{align}
	and in Case \eqref{CaseB} by
	\begin{align}\label{eq:GravPotLinCoeffB}
		c_n=\begin{cases}
			\frac{\pi}{2}\left(1-\frac{1}{n}\right) & n\geq 1,
			\\
			\frac{\pi}{2} & n=0.
		\end{cases}
	\end{align}
	Let us note that the integral in \eqref{eq:GravPotLinCoeffA} defines a real quantity.
	
	Finally, the numbers $ A_n'(1) $ are computed by means of the functions $ A_n:(0,1)\to \R $ solving the ODE
	\begin{align}\label{eq:LinearizationStreamFunctODE}
		\dfrac{1}{r}(rA_n')' -\dfrac{n^2}{r^2}A_n-G'(\phi_0(r))A_n = r^{|n|}G(\phi_0(r)), \quad A_n(1)=0.
	\end{align}
	They appear in the Fr\'echet derivative of the stream function $ h\mapsto \phi_h $, cf. Section \ref{sec:inver}.
	
	The main result of this work reads as follows.
	\begin{thm}\label{thm:MainThm}
		Let $ k\in \N_0 $, $ \alpha\in (0,1) $ and $ a_0\geq 2 $. Assume $ G\in C^{k+3}(\R;\R) $ to be non-decreasing. Let $ \Omega_0\geq0 $ be related to $ X_0=(a_0(\Omega_0),0) $ as stated in \eqref{eq:RelationPotentialParticlePosition} and let the non-resonance condition
		\begin{align}\label{eq:NonResonanceAssumption}
			\forall n\in \N \, : \, \omega_n\neq 0
		\end{align}
		be satisfied for $ \omega_n $ given in \eqref{eq:FourierMultipliers}. Furthermore, we assume for the unperturbed stream function $ \phi_0 $ that
		\begin{align}\label{eq:ConditionUnperturbedStreamFunction}
			\phi'_0(1)\neq 0.
		\end{align}
		
		Then, there are $ \delta>0 $, $ \varepsilon>0 $ such that for any $ m\in [0,\delta) $ there is a unique solution $ (h,a,\lambda)\in \mathbb{X}^{k+2,\alpha} $ of the equation $ \F(h,a,\lambda,m)=0 $ satisfying
		\begin{align*}
			\norm[k+2,\alpha]{h} + |a-a_0|+|\lambda-\lambda_0|< \varepsilon.
		\end{align*}
		Furthermore, the dependence $ m\mapsto (h,a,\lambda)(m) $ is continuous.
	\end{thm}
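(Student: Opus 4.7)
The plan is to reduce Theorem \ref{thm:MainThm} to a direct application of the implicit function theorem (Lemma \ref{lem:IFT}) to the map $\F$ of \eqref{eq:FullFunction}, viewed as a map from $\mathbb{X}^{k+2,\alpha}\times\R$ into $\mathbb{Z}^{k+1,\alpha}$ on a suitable neighbourhood of the unperturbed point $(0,a_0,\lambda_0,0)$. First I would verify that $\F$ vanishes there: each component does so by the choice of $a_0$ via \eqref{eq:RelationPotentialParticlePosition}, by the definition of $\lambda_0= \tfrac12\phi_0'(1)^2-\tfrac12\Omega_0^2+U_0(1)$, and by $|\D|=\pi$. Then I would invoke the results of Section \ref{sec:Frechet} to conclude that $\F$ is $C^1$ in a neighbourhood of this point. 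The main technical ingredients are that $h\mapsto f_h$ is affine, that the conformal-mapping bound Lemma \ref{lem:ConformalMapping} makes $f_h(\D)$ a legitimate perturbation of $\D$, that the well-posedness result Lemma \ref{lem:WellPosedStreamFunction} together with standard Schauder theory yields a differentiable dependence $h\mapsto\phi_h$ via \eqref{eq:StreamFunctionDisk}, and that the separation $a_0\geq2$ keeps the interaction terms $U_h\circ f_h$ and $\partial_{x_1}U_h(X)$ away from their singular set even in the borderline Newtonian Case \eqref{CaseA} with $\nu=1$.

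The core of the argument is the invertibility of the partial Fréchet derivative $D_{(h,a,\lambda)}\F(0,a_0,\lambda_0,0)$, which I would establish by a Fourier decomposition on $\T$, following the outline leading to \eqref{eq:FourierMultipliers} and Lemma \ref{lem:LinearizationZero}. The $(a,\lambda)$-block reduces to a small finite-dimensional system involving the zero Fourier mode, the Newton equation, and the volume constraint; using the strict monotonicity of $a_0\mapsto U_0'(a_0)/a_0$ recorded above, one shows its determinant is nonzero. For the infinite-dimensional part, each pair of conjugate modes $z^{n+1},\overline{z}^{n+1}$ for $n\geq1$ is an eigendirection of the linearisation, with eigenvalue exactly $\omega_n$ from \eqref{eq:FourierMultipliers}. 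Here the contribution $\tfrac12|\nabla\phi_h|^2/|f_h'|^2$ produces the terms involving $\phi_0'(1)$ and $A_n'(1)$ through the linearisation of the stream function governed by the ODE \eqref{eq:LinearizationStreamFunctODE}, the centrifugal term $-\tfrac{\Omega_0^2}{2}|f_h|^2$ contributes $-\tfrac12\Omega_0^2$, and the interaction $U_h\circ f_h$ contributes $c_{|n|}$ from \eqref{eq:GravPotLinCoeffA}--\eqref{eq:GravPotLinCoeffB}. The non-resonance hypothesis \eqref{eq:NonResonanceAssumption} guarantees the symbol is bounded away from zero, while $\phi_0'(1)\neq 0$ from \eqref{eq:ConditionUnperturbedStreamFunction} fixes the leading asymptotics of $\omega_n$ in $n$, so the linearised operator is a bicontinuous isomorphism between the relevant Hölder spaces.

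With these three ingredients in place, Lemma \ref{lem:IFT} yields $\delta,\varepsilon>0$ and a unique continuous branch $m\mapsto(h(m),a(m),\lambda(m))$ of solutions to $\F(\cdot,m)=0$ within the $\varepsilon$-neighbourhood of $(0,a_0,\lambda_0)$, exactly as claimed. The principal obstacle I anticipate lies in the invertibility step: one must verify that the nonlocal potential operator $h\mapsto U_h\circ f_h$ is genuinely diagonal in the Fourier basis, producing the coefficients $c_n$ as written rather than off-diagonal mixing, and that the ODE \eqref{eq:LinearizationStreamFunctODE} for $A_n$ admits solutions with controlled $A_n'(1)$ so that $|\omega_n|$ grows at least linearly in $n$. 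Only such a lower bound ensures that inversion by the multiplier $1/\omega_n$ maps $C^{k+1,\alpha}(\T)$ into $C^{k+2,\alpha}(\T)$, recovering the gain of one derivative that the functional setting \eqref{eq:DefSpaces} demands.
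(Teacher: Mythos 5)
Your proposal is correct and follows essentially the same route as the paper: verify $\F(0,a_0,\lambda_0,0)=0$, use the $C^1$-regularity of $\F$ (Proposition \ref{pro:MainProbFrechetDeriv}), invert $D_{(h,a,\lambda)}\F(0,a_0,\lambda_0,0)$ via the Fourier diagonalization with multipliers $\omega_n$ together with the triangular finite-dimensional block for $(\hat g_0,\mu,b)$ (Proposition \ref{pro:InvertLinOp}, using the asymptotics of $A_n'(1)$ and $c_n$ and Lemma \ref{lem:PeriodicPseudoDiffReg} for the gain of one derivative), and conclude by the implicit function theorem (Lemma \ref{lem:IFT}). The "obstacles" you flag are precisely the points the paper settles in Section \ref{sec:inver}, so no genuine gap remains.
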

	As a corollary we obtain that a solution to $ \F(h,a,\lambda,m)=0 $ yields a solution to our original problem \eqref{eq:IncompEulerPoisson}.
	\begin{cor}\label{cor:MainResult}
		Under the assumption of Theorem \ref{thm:MainThm}, the domain $ E_h=f_h(\D) $ in Theorem~\ref{thm:MainThm} is symmetric w.r.t. the $ x_1 $-axis. Finally, the corresponding velocity field $ v=\nabla^\perp\psi_h $ together with the position of the particle $ X=(a,0) $ and the non-hydrostatic pressure $P$ yield a solution to \eqref{eq:IncompEulerPoisson}.
	\end{cor}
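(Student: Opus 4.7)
The plan is to verify each of the seven equations in \eqref{eq:IncompEulerPoisson} from the fact that $(h,a,\lambda)$ solves $\F(h,a,\lambda,m)=0$, beginning with the symmetry claim.

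\textbf{Symmetry via uniqueness.} First, I would exhibit a reflection symmetry of $\F$ around the $x_1$-axis and invoke the uniqueness part of Theorem \ref{thm:MainThm}. Given $(h,a,\lambda)$ with $\F=0$, define $\tilde h(z):=\overline{h(\bar z)}$; then $\tilde h\in H_0^{k+2,\alpha}$ with the same norm, $f_{\tilde h}(z)=\overline{f_h(\bar z)}$, and $E_{\tilde h}$ is the mirror image of $E_h$ across the real axis. I would check that each entry of $\F(\tilde h,a,\lambda,m)$ is obtained from the corresponding entry of $\F(h,a,\lambda,m)$ by the substitution $\varphi\mapsto -\varphi$ in the boundary component, and is unchanged in the other two components. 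The key inputs are that the kernel in $U$ depends only on moduli, that $X=(a,0)$ is real, and that $\phi_{\tilde h}(z)=\phi_h(\bar z)$ solves \eqref{eq:StreamFunctionDisk} for $\tilde h$ (since the Laplacian and $|f_h'|^2$ are invariant under conjugation). Hence $(\tilde h,a,\lambda)$ lies in the same uniqueness neighborhood, and Theorem \ref{thm:MainThm} forces $\tilde h=h$, which is the desired symmetry of $E_h$.

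\textbf{Reconstruction of the Euler--Poisson solution.} Next, I would carry out the Grad--Shafranov construction in reverse. Define $\psi_h:=\phi_h\circ f_h^{-1}$ and $v:=\nabla^\perp\psi_h$. Then $\nabla\cdot v=0$ and, since $\psi_h$ vanishes on $\partial E_h$, also $n\cdot v=\tau\cdot\nabla\psi_h=0$. Introducing the primitive $F$ with $F'=G+2\Omega_0$, $F(0)=0$, the identity $v\times(\omega+2\Omega)=\nabla F(\psi_h)$ together with the vector-product identity $-\omega Jv=-(v\cdot\nabla)v+\tfrac{1}{2}\nabla|v|^2$ recasts the first equation of \eqref{eq:IncompEulerPoisson} as the equality of gradients of $P$ and the right-hand side of \eqref{eq:ReconstructionPressure}; I take this formula as the definition of $P$. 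The boundary condition $P=U_h+mU_X$ on $\partial E_h$ then reduces, using $F(\psi_h)|_{\partial E_h}=0$ and the parametrization $\partial E_h=f_h(\partial\D)$, to the vanishing of the first component of $\F(h,a,\lambda,m)$.

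\textbf{Remaining constraints and the center-of-mass identity.} The volume equation $|E_h|=\pi$ and the $x_1$-component of the particle's Newton's law are built into $\F$. The $x_2$-component follows from the first paragraph: symmetry of $E_h$ in the real axis makes $U_h$ even in $x_2$, so $\partial_{x_2}U_h(a,0)=0=\Omega_0^2\cdot 0$. The step requiring genuine work, which I view as the main obstacle, is the center-of-mass identity $\int_{E_h}x\,dx+mX=0$, which is absent from \eqref{eq:FullSystem}. To obtain it, I would integrate the momentum equation over $E_h$: the convective term vanishes thanks to $\nabla\cdot v=0$ and $v\cdot n=0$, and $\int_{E_h}Jv=0$ follows from $\psi_h|_{\partial E_h}=0$. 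Converting $\int_{E_h}\nabla P$ to a boundary integral of $U_h+mU_X$ and back to a volume integral via the divergence theorem, antisymmetry of $\nabla_x K(x-y)$ kills $\int_{E_h}\nabla U_h$, while Newton's third law gives $\int_{E_h}\nabla U_X=-\nabla U_h(X)$. This leaves $\Omega_0^2\int_{E_h}x\,dx+m\nabla U_h(X)=0$; substituting the particle Newton's law $\nabla U_h(X)=\Omega_0^2 X$ (now known in both components) and using $\Omega_0>0$ (ensured by $a_0\geq 2$) yields the center-of-mass equation, completing the proof.
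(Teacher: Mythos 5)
Your proposal is correct and follows essentially the same three-step structure as the paper's proof: reflection $\tilde h(z)=\overline{h(\bar z)}$ plus uniqueness from the implicit function theorem for symmetry, reconstruction of $P$ from \eqref{eq:ReconstructionPressure} for the first four equations, and integration of the momentum equation over $E_h$ — using the vanishing of the convective and Coriolis terms, the divergence theorem for $\nabla P$, antisymmetry of $\nabla U_h$, and Newton's third law for the $U_X$ cross terms — for the center-of-mass constraint. The only detail the paper flags that you omit is that the antisymmetry argument $\int_{E_h}\nabla U_h\,dx=0$ is singular for $\nu=1$ and requires an approximation, but this is a minor technical point.
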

	\begin{rem}\label{rem:NonResonanceCondition}
		Let us comment on the non-resonance condition \eqref{eq:NonResonanceAssumption}. 
		\begin{enumerate}[(i)]
			\item It ensures that the linearized operator can be inverted in order to apply the implicit function theorem. In the case that \eqref{eq:NonResonanceAssumption} is not satisfied bifurcations to other shapes might occur. 
			\item Note that the condition \eqref{eq:NonResonanceAssumption} is needed only for $ n\in \N $, since $ \omega_n $ only depends on $ |n| $. Furthermore, as we will see in Lemma \ref{lem:AsymptotcisCoeff} and Lemma \ref{lem:AsymptotcisCoeffGravPot} the leading order term on the right hand side of \eqref{eq:NonResonanceAssumption} is given by $ -\phi_0'(1)^2(|n|+1) $, whereas the other terms are at most of order $ \mathcal{O}(\ln n) $ as $ n\to \infty $. In particular, the condition \eqref{eq:NonResonanceAssumption} is automatically satisfied for sufficiently large $ n $. Hence, it is possible to verify the condition numerically.
			\item In the particular case that the fluid has no internal motion in the non-rotating coordinate system for $ m=0 $ we have $ v(x)=-2\Omega_0Jx $ and thus $ \phi_0(x)= -\Omega_0|x|^2/2 $. This corresponds to the choice $ G= -2\Omega_0 $. Then, we can readily check that solutions to \eqref{eq:LinearizationStreamFunctODE} have the form
			\begin{align*}
				A_n(r)&=-2\Omega_0\dfrac{r^n(r^2-1)}{4n+4}, \quad A_n'(1)=-\dfrac{\Omega_0}{n+1}.
			\end{align*}
			Hence, the condition \eqref{eq:NonResonanceAssumption} reduces to
			\begin{align*}
				\omega_n=-\dfrac{|n|}{2}\Omega_0^2+c_{|n|}\neq 0.
			\end{align*}
		\end{enumerate}
	\end{rem}
	\begin{rem}
		Let us mention that the assumption \eqref{eq:ConditionUnperturbedStreamFunction} in Theorem \ref{thm:MainThm} is also needed to prove the invertibility of the Fr\'echet derivative in order to apply the implicit function theorem. This condition implies that the function $ \phi_0 $ has no local extremum at the boundary. When perturbing such extrema, saddle points are created generically. Consequently, vortices would appear.
	\end{rem}
    \begin{rem}
        We are assuming in Theorem \ref{thm:MainThm} that $m\geq 0$ since it is the most natural setting from the physical point of view. However, the proof of Theorem \ref{thm:MainThm} is also valid for the case $m\in(-\delta,\delta)$. 
    \end{rem}
	\begin{rem}
		In this paper we restricted ourselves to interaction potentials defined in Case \eqref{CaseA} and Case \eqref{CaseB}. The study of more general interactions, would require further modifications. In particular, a better understanding of results like Lemma \ref{lem:PeriodicPseudoDiffReg} on pseudo-differential operators on the torus.
	\end{rem}
	
	\section{Preliminary results}\label{sec:preliminar}
	We collect here some auxiliary results that will be used in the subsequent sections. Let us start with a well-known result in complex analysis regarding analytic functions.
	\begin{lem}\label{lem:ConformalMapping}
		Consider the analytic function $ f_h(z)=z+h(z) $ with $ \norm[C^1(\overline{\D})]{h}<1/\sqrt{2} $. Then, $ f_h:\D\to f_h(\D) $ is conformal.
	\end{lem}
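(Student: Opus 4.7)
The goal is to show that $f_h(z)=z+h(z)$ is analytic, injective on $\D$, and has non-vanishing derivative there; these three properties together are exactly the definition of a conformal map $\D\to f_h(\D)$. Analyticity is inherited from $h$, so the work consists in extracting two quantitative bounds from the hypothesis $\norm[C^1(\overline{\D})]{h}<1/\sqrt{2}$: namely an $L^\infty$ bound on the complex derivative $h'$, and a Lipschitz-type bound on $h$ itself.

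First I would translate the $C^{1}$ bound into a bound on $|h'|$. Writing $h=u+iv$ and using the Cauchy--Riemann relations $u_{x}=v_{y}$, $u_{y}=-v_{x}$, the real Jacobian of $h$ viewed as a map $\overline{\D}\subset\R^{2}\to\R^{2}$ is $|h'(z)|$ times a rotation. A short computation then gives $|\partial_{x_{1}}h|^{2}+|\partial_{x_{2}}h|^{2}=2|h'(z)|^{2}$, so that the standard Euclidean norm of the gradient satisfies $|\nabla h|=\sqrt{2}\,|h'|$. The hypothesis therefore yields
\begin{equation*}
\norm[\infty]{h'}\le\tfrac{1}{\sqrt{2}}\norm[C^{1}(\overline{\D})]{h}<\tfrac{1}{2}<1.
\end{equation*}

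With this estimate in hand I would verify the two remaining properties in turn. For non-vanishing derivative, $f_{h}'(z)=1+h'(z)$, whence $|f_{h}'(z)|\ge 1-\norm[\infty]{h'}>1/2>0$ for every $z\in\D$. For injectivity I exploit convexity of $\D$: given distinct $z_{1},z_{2}\in\D$, the segment $[z_{1},z_{2}]$ lies in $\D$, so the fundamental theorem of calculus along it gives
\begin{equation*}
h(z_{2})-h(z_{1})=\int_{0}^{1}h'(z_{1}+t(z_{2}-z_{1}))\,dt\,(z_{2}-z_{1}),
\end{equation*}
and hence $|h(z_{2})-h(z_{1})|\le \norm[\infty]{h'}\,|z_{2}-z_{1}|$. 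Therefore
\begin{equation*}
|f_{h}(z_{2})-f_{h}(z_{1})|\ge|z_{2}-z_{1}|-|h(z_{2})-h(z_{1})|\ge(1-\norm[\infty]{h'})|z_{2}-z_{1}|>0,
\end{equation*}
which proves injectivity. Combining injectivity, analyticity, and non-vanishing derivative, $f_{h}$ is a biholomorphism onto its image $f_{h}(\D)$, i.e.\ conformal.

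The only subtle point is the initial translation between the $C^{1}$ norm (which is a real-variable object on $\overline{\D}$) and the modulus of the complex derivative; once the factor $\sqrt{2}$ is correctly accounted for via Cauchy--Riemann, the rest reduces to the classical perturbation-of-identity argument and presents no genuine obstacle.
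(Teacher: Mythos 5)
Your proof is correct and takes a genuinely different route from the paper. The paper proves injectivity only on the boundary circle $\partial\D$ (splitting into the cases $|\varphi_1-\varphi_2|\geq\pi/2$ and $|\varphi_1-\varphi_2|<\pi/2$ to control $|f_h(e^{i\varphi_1})-f_h(e^{i\varphi_2})|$ from below) and then invokes the Darboux--Picard theorem to conclude injectivity on $\overline{\D}$. You instead prove injectivity directly on $\D$ by the classical perturbation-of-identity argument: integrate $h'$ along the segment $[z_1,z_2]\subset\D$ (using convexity of the disk) to obtain the Lipschitz bound $|h(z_2)-h(z_1)|\leq\norm[\infty]{h'}\,|z_2-z_1|$, and then conclude $|f_h(z_2)-f_h(z_1)|\geq(1-\norm[\infty]{h'})|z_2-z_1|>0$. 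Your route is more elementary and self-contained; the paper's route yields the slightly stronger statement of injectivity on $\overline{\D}$ (which the authors do use implicitly elsewhere), at the cost of invoking Darboux--Picard.

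One caveat on the opening Cauchy--Riemann step. You derive $\norm[\infty]{h'}\leq\tfrac{1}{\sqrt{2}}\norm[C^1(\overline{\D})]{h}$ by identifying the $C^1$ norm with the supremum of the Frobenius norm of the real Jacobian. The paper's own usage (e.g.\ the bound $|\varphi_2-\varphi_1|\norm[C^1(\overline{\D})]{h}$ for $\bigl|\int_{\varphi_1}^{\varphi_2}h'(e^{i\psi})ie^{i\psi}\,d\psi\bigr|$) only requires the weaker, convention-independent inequality $\norm[\infty]{h'}\leq\norm[C^1(\overline{\D})]{h}$. Since $\partial_{x_1}h$ and $\partial_{x_2}h$ both have modulus exactly $|h'|$, a $C^1$ norm built from $\max(|\partial_{x_1}h|,|\partial_{x_2}h|)$ or from the operator norm of the Jacobian reproduces $|h'|$ with no $\sqrt{2}$. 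Your argument does not actually need the sharper constant: $\norm[\infty]{h'}<1/\sqrt{2}<1$ already gives $1-\norm[\infty]{h'}>0$. So the digression is harmless but should either be dropped or replaced by the immediate observation $\norm[\infty]{h'}\leq\norm[C^1(\overline{\D})]{h}<1/\sqrt{2}<1$.

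Finally, note that the non-vanishing of $f_h'$ is automatic once injectivity of the holomorphic map is established, so that check, while correct, is redundant.
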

	\begin{proof}
		We prove that $ f_h $ is injective. Define the function $ \zeta(\varphi)=f_h(e^{i\varphi}) $, $ \varphi\in\T $. Let $ \varphi_1,\, \varphi_2\in \T $. We can assume $ |\varphi_1-\varphi_2|\leq \pi $. If $ |\varphi_1-\varphi_2|\geq \pi/2 $ we have
		\begin{align*}
			\left| \zeta(\varphi_2)-\zeta(\varphi_1) \right| \geq  \left| e^{i\varphi_2}-e^{i\varphi_1}\right| -2\norm[C(\overline{\D})]{h}= 2\left| \sin \left( \dfrac{\varphi_2-\varphi_1}{2} \right) \right| -2\norm[C(\overline{\D})]{h}>0.
		\end{align*}
		On the other hand,  if $ |\varphi_1-\varphi_2|< \pi/2 $ we estimate
		\begin{align*}
			\left| \zeta(\varphi_2)-\zeta(\varphi_1) \right| &= \left| \int_{\varphi_1}^{\varphi_2}f_h'(e^{i\psi})ie^{i\psi}\, d\psi \right| \geq \left| e^{i\varphi_2}-e^{i\varphi_1}\right|  -|\varphi_2-\varphi_1|\norm[C^1(\overline{\D})]{h}
			\\
			&= 2\left| \sin \left( \dfrac{\varphi_2-\varphi_1}{2} \right) \right| -|\varphi_2-\varphi_1|\norm[C^1(\overline{\D})]{h} \geq |\varphi_2-\varphi_1| \left( \dfrac{1}{\sqrt{2}}-\norm[C^1(\overline{\D})]{h} \right).
		\end{align*}
		Hence, $ f_h $ is one-to-one on the boundary. As a consequence of the Darboux-Picard theorem, see \cite[Thm. 9.16]{Burckel1979CompAna}, $ f_h $ is injective on $ \overline{\D} $.
	\end{proof}
	
	\begin{lem}[Fa\`{a} di Bruno formula \cite{FaaDiBruno}]\label{lem:FaaDiBruno} For any $ n\in \N $ and two functions $ f, \, g \in C^n(\R;\R) $ we have the formula
		\begin{align*}
			\dfrac{d^n}{dx^n}(f\circ g)(x) = \sum_{\substack{\ell_1,\ldots, \ell_n
					\\ 1\cdot \ell_1+\cdots +n\cdot \ell_n=n}} n! \,  \left[ \dfrac{d^{\ell_1+\cdots +\ell_n}}{dx^{\ell_1+\cdots+ \ell_n}}f \right]  (g(x)) \, \prod_{j=1}^n \left( \dfrac{1}{\ell_j! \, j!}\dfrac{d^j}{dx^j}g(x) \right)^{\ell_j}.
		\end{align*}
	\end{lem}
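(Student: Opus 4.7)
The plan is to argue by induction on $n$, which is the natural and self-contained approach and does not rely on any auxiliary machinery. The base case $n=1$ is the chain rule: the only solution of $\ell_1=1$ is $\ell_1=1$, and the right-hand side evaluates to $1!\cdot f'(g(x))\cdot \tfrac{1}{1!\,1!}g'(x) = f'(g(x))g'(x)$.

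For the inductive step, assume the formula holds for some $n\geq 1$. Differentiating the identity term by term, two kinds of contributions appear for each multi-index $(\ell_{1},\ldots,\ell_{n})$ with $\sum_{j} j \ell_{j}=n$. First, the derivative can fall on the outer factor $f^{(\ell_1+\cdots+\ell_n)}(g(x))$, producing an additional $g'(x)$ and increasing the order of $f$ by one; this corresponds to augmenting the multi-index $(\ell_1,\ldots,\ell_n,0)$ to $(\ell_1+1,\ell_2,\ldots,\ell_n,0)$ and extending it by a trailing zero to a length-$(n+1)$ multi-index. Second, the derivative can hit one of the factors $(g^{(j)}(x))^{\ell_j}$, which replaces a block of size $j$ by a block of size $j+1$, i.e.\ it sends $(\ldots,\ell_j,\ell_{j+1},\ldots)$ to $(\ldots,\ell_j-1,\ell_{j+1}+1,\ldots)$. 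In both cases $\sum_j j\ell_j$ increases by exactly one, so the resulting multi-indices range over partitions of $n+1$.

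The main technical step is therefore the combinatorial bookkeeping that shows every multi-index $(\ell_1',\ldots,\ell_{n+1}')$ with $\sum j\ell_j' = n+1$ is produced with the prescribed coefficient
\[
\frac{(n+1)!}{\prod_{j=1}^{n+1}(\ell_j'!)(j!)^{\ell_j'}}.
\]
This is verified as follows: for a fixed target $(\ell_1',\ldots,\ell_{n+1}')$, each $j$ with $\ell_j'\geq 1$ can arise by differentiating a factor $(g^{(j-1)})^{\ell_{j-1}'+1}$ (contributing the preimage with $\ell_{j-1}\mapsto \ell_{j-1}'+1$, $\ell_j\mapsto \ell_j'-1$), and additionally if $\ell_1'\geq 1$ the index can arise by differentiating the outer $f$-factor (contributing the preimage with $\ell_1\mapsto \ell_1'-1$ and the same other entries). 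Using the inductive coefficient $n!/\prod (\ell_j!)(j!)^{\ell_j}$ on each preimage and multiplying by the multiplicity produced by the Leibniz rule (namely $\ell_{j-1}'+1$ in the first case, and the identity in the second), a direct check — rewriting each preimage coefficient in terms of the target multi-index and using $\sum_j j\ell_j'=n+1$ — shows that the contributions sum exactly to the asserted coefficient.

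The only obstacle is the combinatorial identity in the last step, which is routine but must be written out carefully. An alternative, conceptually cleaner route would be to reformulate the statement in terms of unordered set partitions $\pi$ of $\{1,\ldots,n\}$, prove by induction that $(f\circ g)^{(n)}(x)=\sum_{\pi}f^{(|\pi|)}(g(x))\prod_{B\in\pi}g^{(|B|)}(x)$ (here the inductive step is transparent, since differentiating once corresponds to either adjoining $\{n+1\}$ as a singleton block or appending $n+1$ to an existing block), and then collect partitions by their size profile using the elementary count $n!/\prod_j (\ell_j!)(j!)^{\ell_j}$. Either presentation yields the stated identity.
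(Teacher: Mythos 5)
Your argument is correct, but there is nothing in the paper to compare it against: Lemma~\ref{lem:FaaDiBruno} is stated as a classical identity and attributed to the reference \cite{FaaDiBruno} without proof, so the paper simply cites it. Your induction-on-$n$ sketch is a standard and valid way to establish it: the base case is the chain rule, the inductive step distinguishes the derivative falling on the outer $f$-factor (which sends $(\ell_1,\ldots,\ell_n)$ to $(\ell_1+1,\ell_2,\ldots,\ell_n,0)$) from the derivative falling on a $g^{(j)}$-factor (which sends $\ell_j\mapsto\ell_j-1$, $\ell_{j+1}\mapsto\ell_{j+1}+1$), and the coefficient bookkeeping closes precisely because for a target multi-index $(\ell_1',\ldots,\ell_{n+1}')$ the contributions are proportional to $\ell_1'$ from the first kind and to $j\,\ell_j'$ (for $j\geq 2$) from the second, so they total $\sum_{j\geq 1} j\,\ell_j' = n+1$, turning $n!$ into $(n+1)!$. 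You state this identity but do not carry out the short computation; writing out the ratio of the preimage coefficient to the target coefficient and multiplying by the Leibniz multiplicity $\ell_{j-1}'+1$ would make the ``direct check'' explicit and complete the proof. The alternative formulation via unordered set partitions that you mention is also standard and, as you say, makes the inductive step conceptually cleaner at the cost of a separate count of partitions with a given block-size profile.
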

	We recall the following version of the implicit function theorem.

	\begin{lem}[Implicit function theorem, \cite{Deimling}]\label{lem:IFT}
		Let $ \mathsf{X},\, \mathsf{Y},\, \mathsf{Z} $ be Banach spaces and $ U\subset \mathsf{X} $, $ V\subset \mathsf{Y} $ be neighborhoods of $ x_0,\, y_0 $, respectively, where $ \F(x_0,y_0)=0 $. Suppose that $ \F: U \times V\to \mathsf{Z} $ is continuous, continuously differentiable with respect to $ x\in U $ and $ D_x\F(x_0,y_0)\in \mathcal{L}(\mathsf{X},\mathsf{Z}) $ is invertible. Then, there are balls $ B_\varepsilon(x_0)\subset U $, $ B_\delta(y_0)\subset V $ and a unique map $ \xi:B_\delta(y_0)\to B_\varepsilon(x_0) $ with $ \F(\xi(y),y)=0 $ for all $ y\in B_\delta(y_0) $. Furthermore, $ \xi $ is continuous.
	\end{lem}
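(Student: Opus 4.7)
The plan is to reduce the equation $\F(x,y)=0$ to a fixed-point problem and apply the Banach contraction principle parametrically in $y$. Since the only regularity hypothesis joining the two variables is continuity of $\F$ (together with continuity and continuous $x$-differentiability), we should not try to invoke the $C^1$ version of the inverse function theorem on the pair $(x,y)$; instead, $y$ enters as a parameter and we exploit the invertibility of $A:=D_x\F(x_0,y_0)\in \mathcal{L}(\mathsf{X},\mathsf{Z})$ only at the base point.

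Concretely, I would introduce the map
\begin{equation*}
T(x,y) := x - A^{-1}\F(x,y),
\end{equation*}
so that fixed points of $T(\cdot,y)$ are precisely the zeros of $\F(\cdot,y)$. Its partial Fréchet derivative in $x$ is
\begin{equation*}
D_xT(x,y) = I - A^{-1}D_x\F(x,y),
\end{equation*}
which vanishes at $(x_0,y_0)$. Because $D_x\F$ is continuous on $U\times V$ (the continuity of $\F$ and of $D_x\F$ in $x$, combined with the joint continuity statement in the hypothesis applied to the derivative in the usual IFT formulation, suffices), one can pick $\varepsilon>0$ and a neighborhood $V_1\subset V$ of $y_0$ such that $\|D_xT(x,y)\|\le 1/2$ for all $(x,y)\in \overline{B_\varepsilon(x_0)}\times V_1$. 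Hence $T(\cdot,y)$ is a $1/2$-contraction on $\overline{B_\varepsilon(x_0)}$ uniformly in $y\in V_1$.

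Next I would choose $\delta>0$ so that $B_\delta(y_0)\subset V_1$ and additionally
\begin{equation*}
\|A^{-1}\F(x_0,y)\| \le \varepsilon/2 \quad\text{for all } y\in B_\delta(y_0),
\end{equation*}
which is possible by continuity of $y\mapsto \F(x_0,y)$ together with $\F(x_0,y_0)=0$. For such $y$ the standard estimate
\begin{equation*}
\|T(x,y)-x_0\| \le \|T(x,y)-T(x_0,y)\|+\|T(x_0,y)-x_0\| \le \tfrac12\|x-x_0\|+\tfrac{\varepsilon}{2}\le \varepsilon
\end{equation*}
shows that $T(\cdot,y)$ maps $\overline{B_\varepsilon(x_0)}$ into itself. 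Banach's fixed-point theorem then produces the unique $\xi(y)\in \overline{B_\varepsilon(x_0)}$ with $\F(\xi(y),y)=0$, and the uniqueness part of the statement follows by the same contraction estimate applied to any two candidate solutions inside $B_\varepsilon(x_0)$.

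For continuity of $\xi$, for $y_1,y_2\in B_\delta(y_0)$ I would write
\begin{equation*}
\xi(y_2)-\xi(y_1) = T(\xi(y_2),y_2)-T(\xi(y_1),y_1),
\end{equation*}
split this into $[T(\xi(y_2),y_2)-T(\xi(y_1),y_2)]+[T(\xi(y_1),y_2)-T(\xi(y_1),y_1)]$, bound the first term by $\tfrac12\|\xi(y_2)-\xi(y_1)\|$ using the contraction property, and absorb it to obtain
\begin{equation*}
\|\xi(y_2)-\xi(y_1)\| \le 2\|A^{-1}\|\,\|\F(\xi(y_1),y_2)-\F(\xi(y_1),y_1)\|,
\end{equation*}
whose right-hand side tends to zero as $y_2\to y_1$ by continuity of $\F$ in $y$. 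The only mildly delicate step is the choice of $\varepsilon,\delta$ in the second paragraph, where one must make sure that the single invertibility assumption at $(x_0,y_0)$ together with the continuity of $D_x\F$ yields uniform contractivity on a whole neighborhood; this is the step that replaces, and is weaker than, the invertibility of $D_x\F$ on a full neighborhood that a stronger formulation would assume.
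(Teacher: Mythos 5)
Your proposal is a correct and standard proof of this lemma; the paper itself does not prove it but cites it to Deimling, whose Theorem~15.1 this is essentially verbatim. The contraction-mapping argument you give (fix points of $T(x,y)=x-A^{-1}\F(x,y)$, show $T(\cdot,y)$ is a uniform $\tfrac12$-contraction on a small closed ball that it maps into itself, apply Banach's fixed-point theorem parametrically, then absorb the contraction term to get continuity of $\xi$) is precisely the textbook route. The one place where your wording is slightly hedged is the passage from invertibility of $A=D_x\F(x_0,y_0)$ to $\|D_xT(x,y)\|\le\tfrac12$ on a product neighborhood: this does require continuity of $(x,y)\mapsto D_x\F(x,y)$ at the point $(x_0,y_0)$ \emph{jointly} in both variables, not merely separate continuity in $x$. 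That is indeed Deimling's hypothesis and is the intended reading of ``continuously differentiable with respect to $x$'' in the lemma, so the step is fine, but it is cleaner to state that convention outright than to appeal vaguely to ``the joint continuity statement in the hypothesis.'' A second cosmetic point: the self-mapping estimate lands $\xi(y)$ in the closed ball $\overline{B_\varepsilon(x_0)}$; to match the statement's open ball $B_\varepsilon(x_0)$ one should arrange strict inequalities, e.g.\ by requiring $\|A^{-1}\F(x_0,y)\|<\varepsilon/2$.
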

	Here, we denote by $ \mathcal{L}(\mathsf{X},\mathsf{Z}) $ the space of bounded linear operators $ \mathsf{X}\to\mathsf{Z} $. Furthermore, $ D_x\F(x_0,y_0)\in \mathcal{L}(\mathsf{X},\mathsf{Z}) $ is the Fr\'echet derivative w.r.t. the first variable, i.e. we have
	\begin{align*}
		\F(x_0+\xi,y_0)=\F(x_0,y_0) + D_x\F(x_0,y_0)[\xi] + o(\norm[\mathsf{X}]{\xi})
	\end{align*}
	as $ \norm[\mathsf{X}]{\xi}\to 0 $.
	
	Let us also give an existence and uniqueness result for the equation \eqref{eq:StreamFunctionDisk}. Such elliptic equations have been studied extensively both in H\"older and Sobolev spaces, see e.g. \cite{Giaquinta2013Regularity,GilbargTrudinger2001}.
	\begin{lem}\label{lem:WellPosedStreamFunction}
		Let $ h\in B_{1/2}\subset H_0^{k+2,\alpha} $ and assume $ G\in C^{k+3}(\R;\R) $ to be non-decreasing. Then there is a unique solution $ \phi_h\in C^{k+2,\alpha}(\overline{\D}) $ to \eqref{eq:StreamFunctionDisk}. Furthermore, there exists a constant $ C>0 $ independent of $ h $ such that
		\begin{align}\label{eq:WellPosedStreamFunctionEstimate}
			\norm[C^{k+2,\alpha}(\overline{\D})]{\phi_h}\leq C.
		\end{align}
	\end{lem}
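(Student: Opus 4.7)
My plan has three parts: uniqueness via an energy identity, existence via the direct method of the calculus of variations, and the uniform H\"older estimate via a Schauder bootstrap, all exploiting the monotonicity of $G$ in an essential way.

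For uniqueness, if $\phi_1,\phi_2\in H^1_0(\D)$ both solve \eqref{eq:StreamFunctionDisk}, set $w=\phi_1-\phi_2$. Then $\Delta w=|f_h'|^2(G(\phi_1)-G(\phi_2))$ with $w|_{\partial\D}=0$, and testing against $w$ and integrating by parts gives
\begin{align*}
\int_\D|\nabla w|^2\,dx=-\int_\D|f_h'|^2\bigl(G(\phi_1)-G(\phi_2)\bigr)(\phi_1-\phi_2)\,dx\leq 0,
\end{align*}
where the sign is forced by the monotonicity of $G$. Hence $w\equiv 0$. For existence I would introduce the primitive $\tilde G(s):=\int_0^s G(\sigma)\,d\sigma$, which is convex and $C^{k+4}$, and consider
\begin{align*}
J_h(\phi):=\int_\D\left(\tfrac12|\nabla\phi|^2+|f_h'|^2\tilde G(\phi)\right)dx,\qquad \phi\in H^1_0(\D).
\end{align*}
The functional is strictly convex, weakly lower semicontinuous, and, using $\tilde G(s)\geq G(0)s$ together with Poincar\'e, coercive. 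The direct method then produces a unique minimizer $\phi_h\in H^1_0(\D)$, which is the weak solution of \eqref{eq:StreamFunctionDisk}.

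For the uniform $C^{k+2,\alpha}$-estimate, the smallness $\norm[k+2,\alpha]{h}<1/2$ guarantees that $|f_h'|^2=|1+h'|^2\in C^{k+1,\alpha}(\overline{\D})$ is uniformly bounded above and away from zero by constants depending only on $B_{1/2}$, so the equation is uniformly elliptic. I would first secure a uniform $L^\infty$-bound on $\phi_h$ by testing with the truncations $(\phi_h-M)_+$ and $(-\phi_h-M)_+$ for $M>0$ large; the monotonicity of $G$ gives favourable signs on the level sets $\{|\phi_h|>M\}$ and Moser iteration closes the bound in terms of $\norm[L^2]{\nabla\phi_h}$ and the modulus of continuity of $G$ on a fixed interval. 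With $\phi_h$ bounded in $L^\infty$ uniformly, the right-hand side is in $L^\infty$ uniformly, so $\phi_h\in W^{2,p}(\D)$ for every finite $p$, and Sobolev embedding gives $\phi_h\in C^{1,\alpha}(\overline{\D})$ uniformly. A Schauder bootstrap combined with the Fa\`a di Bruno formula (Lemma \ref{lem:FaaDiBruno}) then propagates regularity step by step: $\phi_h\in C^{j,\alpha}\Rightarrow G(\phi_h)\in C^{j,\alpha}\Rightarrow |f_h'|^2 G(\phi_h)\in C^{\min(j,k+1),\alpha}\Rightarrow \phi_h\in C^{\min(j,k+1)+2,\alpha}$. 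Iterating finitely many times yields $\phi_h\in C^{k+2,\alpha}(\overline{\D})$ with the bound \eqref{eq:WellPosedStreamFunctionEstimate}.

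The main technical obstacle is securing the uniform $L^\infty$-bound, since no sign or growth condition on $G$ is assumed and the coercive tail of $J_h$ alone only furnishes an $H^1$-bound; the trick is to exploit monotonicity at the level of truncations so that the Moser iteration constants depend only on the uniform ellipticity bounds for $|f_h'|^2$ and on $G$ restricted to a compact interval. Once this is in place, the remaining propagation of regularity is standard and the constants in \eqref{eq:WellPosedStreamFunctionEstimate} are automatically independent of $h\in B_{1/2}$.
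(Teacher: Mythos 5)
Your overall plan — variational existence via a convex functional, uniqueness from monotonicity of $G$, uniform $L^\infty$ bound followed by a Schauder bootstrap — is the same as the paper's (the paper formulates existence on $E_h$ via $\psi_h$ and uniqueness via the comparison principle, but these are cosmetic differences from your $\D$-based formulation and energy-identity uniqueness). However, the $L^\infty$ step is stated too loosely. Your claim that testing with $(\phi_h-M)_+$ and $(-\phi_h-M)_+$ gives ``favourable signs on the level sets'' for $M$ large is only true when $G$ takes both signs (or vanishes somewhere): then one can pick $M$ with $G(-M)\leq 0\leq G(M)$, and both truncation tests force $|\phi_h|\leq M$ directly, with no iteration needed. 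But the hypothesis does not exclude the case that $G$ is one-signed. If, say, $G>0$ everywhere, the upper truncation still yields $\phi_h\leq 0$, but on $\{\phi_h<-M\}$ one has $G(\phi_h)>0$ and $-\phi_h-M>0$, so the right-hand side has the \emph{wrong} sign and the truncation gives nothing. The correct way to close this case (as the paper does by treating the two cases separately) is to observe that $\phi_h\leq 0$ together with monotonicity already forces $0<G(\phi_h)\leq G(0)$, so the right-hand side $|f_h'|^2G(\phi_h)$ is uniformly bounded in $L^\infty$ regardless; elliptic $L^p$ theory then gives the $W^{2,p}$ bound and hence the missing lower $L^\infty$ bound, after which your Schauder/Fa\`a di Bruno bootstrap proceeds exactly as written. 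In short: the truncation gives a sign only on one side when $G$ has constant sign, and the complementary bound must come from the boundedness of $G(\phi_h)$ together with elliptic regularity rather than from sign-favourable truncations; once you split into those two cases, the argument is complete.
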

	\begin{proof}
		We prove the assertion in terms of $ \psi_h=\phi_h\circ f_h^{-1} $. The existence follows from standard methods of calculus of variations applied to the functional
		\begin{align*}
			\psi\mapsto \int_{E_h} |\nabla\psi|^2\, dx + \int_{E_h}F\left( \psi \right) \, dx,
		\end{align*}  
	    where $ F'=G $ is a primitive. Note that $ F $ is convex, since $ G $ is non-decreasing. The regularity follows via a bootstrapping argument, recalling that $ G\in C^{k+3}(\R;\R)$. Observe that due to $ f_h\in H^{k+2,\alpha} $, the boundary $ \partial E_h $ is sufficiently regular. The uniqueness can be proved using a comparison principle, since $ G $ is non-decreasing.
		
		The estimate \eqref{eq:WellPosedStreamFunctionEstimate} is a consequence of the maximum principle and Schauder estimates. Indeed, this will be done by separating two cases.
		
		\textit{Case 1.} We assume that there is $ y_0\in \R $ with $ G(y_0)=0 $. Since $ G $ is non-decreasing, we can find $ N>0 $ sufficiently large such that $ G\left( -N \right)\leq 0\leq G\left( N \right) $. We conclude from a comparison principle that $ \norm[\infty]{\phi_h}\leq N $. Hence, the right-hand side in \eqref{eq:StreamFunctionDisk} is uniformly bounded in $ h $. We apply regularity theory in Sobolev spaces to conclude that $ \phi_h\in W^{2,2} $ with a bound independent of $ h\in B_{1/2} $. Hence, by Sobolev embedding we obtain $ \phi_h\in C^\alpha $. Now, the right-hand side in \eqref{eq:StreamFunctionDisk} is uniformly bounded in $ C^\alpha $. We hence apply repeatedly Schauder estimates to yield the result.
		
		\textit{Case 2.} If $ G $ is always non-zero, we can assume w.l.o.g. that $ G>0 $. In this case, we infer $ \phi_h\leq 0 $ by the maximum principle. Thus, the right-hand side in \eqref{eq:StreamFunctionDisk} is uniformly bounded. We can now argue as in Case 1.
	\end{proof}
	
	Next, we prove a formula for the unperturbed interaction potential of $\D$, i.e. of the unperturbed solution for $ m=0 $, in Case \eqref{CaseA} with $ \nu=1 $ and Case \eqref{CaseB}.
	\begin{lem}\label{lem:UnperturbedPotential}
		The following formulas hold in Case \eqref{CaseA} with $ \nu=1 $
		\begin{align}\label{eq:UnperturbedPotentialA}
			\begin{split}
				U_0(r) &= -\dfrac{4}{\pi^2} \sum_{k\geq 0}W_{2k}^2 \left( \dfrac{r}{2k+2}+\dfrac{r}{2k-1}-\dfrac{r^{2k}}{2k-1} \right), \quad 0\leq r\leq 1,
				\\
				U_0(r) &= -\dfrac{4}{\pi^2} \sum_{k\geq 0}\dfrac{W_{2k}^2 }{2k +2} \dfrac{1}{r^{2k+1}}, \quad r \geq 1.
			\end{split}
		\end{align}
		Here, $ W_\ell= \frac{\pi}{2}\frac{(\ell-1)!!}{\ell!!}  $ is Wallis' formula, see \cite[Formula 6.1.49]{Abramowitz1965Handbook}.
		In Case \eqref{CaseB} we have that
		\begin{align}\label{eq:UnperturbedPotentialB}
			U_0(r) = \begin{cases}
				-\dfrac{\pi}{2}(1-r^2) & r\leq 1,\\
				\pi \ln r & r\geq 1.
			\end{cases}
		\end{align}
	\end{lem}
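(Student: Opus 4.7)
The two formulas are proved by different techniques, reflecting the analytic structure of the two potentials.

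For Case \eqref{CaseB}, the logarithmic potential satisfies $\Delta_x \ln|x-y|=2\pi\delta_y(x)$ distributionally, so differentiation under the integral sign yields $\Delta U_0=2\pi\,\ind_{\D}$ in the sense of distributions on $\R^2$. By rotational symmetry $U_0=U_0(r)$ solves $\frac{1}{r}(rU_0')'=2\pi$ on $(0,1)$ and $\frac{1}{r}(rU_0')'=0$ on $(1,\infty)$. Imposing regularity at the origin eliminates logarithmic contributions inside, matching $U_0$ and $U_0'$ across $r=1$ relates the two free constants, and the explicit value $U_0(0)=2\pi\int_0^1 s\ln s\,ds=-\pi/2$ pins down the remaining constant, producing the two-piece formula \eqref{eq:UnperturbedPotentialB}.

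For Case \eqref{CaseA} with $\nu=1$, by rotational invariance one may take $x=(r,0)$ and $y=s(\cos\theta,\sin\theta)$, so that $|x-y|^2=r^2+s^2-2rs\cos\theta$. The main tool is the Legendre generating function
\begin{align*}
(1-2q\cos\theta+q^2)^{-1/2}=\sum_{k\geq 0}P_k(\cos\theta)\,q^k,
\end{align*}
applied with $q=s/r$ when $s<r$ and $q=r/s$ when $s>r$. The odd-index Legendre polynomials integrate to zero in $\theta$ by parity, while for even indices one reduces $\int_0^{2\pi}P_{2k}(\cos\theta)\,d\theta$ to a multiple of $P_{2k}(0)^2$ (for instance by squaring the generating function and matching the $0$-th Fourier mode in $\theta$) and then invokes the closed form $P_{2k}(0)=(-1)^k\tfrac{2W_{2k}}{\pi}$ from Abramowitz--Stegun. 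For $r\leq 1$, one splits $\int_0^1(\cdot)\,ds=\int_0^r+\int_r^1$ and evaluates the elementary integrals $\int_0^r s^{2k+1}\,ds=r^{2k+2}/(2k+2)$ and $\int_r^1 s^{-2k}\,ds=(r^{1-2k}-1)/(2k-1)$ (the latter remaining correct at $k=0$, where it yields $1-r$). Collecting terms produces the three-term expression $\frac{r}{2k+2}+\frac{r}{2k-1}-\frac{r^{2k}}{2k-1}$ in \eqref{eq:UnperturbedPotentialA}. For $r\geq 1$ only the regime $q=s/r<1$ arises, and a single $s$-integration produces the exterior formula.

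The principal difficulty is justifying the termwise integration and summation, together with careful bookkeeping of the constants. The Legendre series converges uniformly on $\{|q|\leq q_0<1\}$, but for $r\leq 1$ the boundary $s\approx r$ forces $q\to 1^{-}$ and one loses uniform convergence on the full integration domain. This is handled by exploiting Wallis' asymptotics $W_{2k}^2\sim \pi/(4k)$, which show that the integrated coefficient series $\sum W_{2k}^2/k$ converges absolutely; one can then split off a neighborhood of $s=r$, exchange sum and integral on the complement by dominated convergence, and pass to the limit using the absolute bound on the coefficient series. A secondary care point is the precise normalization of the Wallis factors so that the prefactor matches the one displayed in \eqref{eq:UnperturbedPotentialA}.
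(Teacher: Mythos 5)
Your route is essentially the paper's: for Case \eqref{CaseB} the paper also just solves the Poisson equation with radial symmetry, and for Case \eqref{CaseA} your planar Legendre expansion of $(r^2+s^2-2rs\cos\theta)^{-1/2}$ is the same computation as the paper's multipole expansion restricted to $\theta=\theta'=\pi/2$; the radial integrals and the $r\leq1$ splitting are identical, and your convergence discussion is if anything more careful than the paper's. Two caveats. First, the crux of Case \eqref{CaseA} is the angular identity $\tfrac{1}{2\pi}\int_0^{2\pi}P_{2k}(\cos\theta)\,d\theta=P_{2k}(0)^2$, and your parenthetical justification (``squaring the generating function and matching the $0$-th Fourier mode'') does not deliver it: squaring gives $(1-2q\cos\theta+q^2)^{-1}$, whose zeroth Fourier mode is $(1-q^2)^{-1}$ and only yields $\sum_{i+j=2m}\tfrac{1}{2\pi}\int P_iP_j\,d\theta=1$, not the individual averages. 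What does work is either the Legendre addition theorem at $\theta=\theta'=\pi/2$ (which is exactly what the paper's spherical-harmonics expansion encodes, since integrating in $\varphi'$ kills all $m\neq0$ modes and leaves $P_\ell(0)^2$), or the factorization $1-2q\cos\theta+q^2=(1-qe^{i\theta})(1-qe^{-i\theta})$ followed by multiplying the two binomial series for $(1-qe^{\pm i\theta})^{-1/2}$, whose zeroth Fourier mode is $\sum_j\binom{-1/2}{j}^2q^{2j}=\sum_j P_{2j}(0)^2q^{2j}$. Second, on the ``prefactor bookkeeping'': carrying your computation through gives the coefficient $2\pi P_{2k}(0)^2=\tfrac{8}{\pi}W_{2k}^2$ in front of the radial integrals (the $2\pi$ coming from the angular integration), not the $\tfrac{4}{\pi^2}W_{2k}^2$ displayed in \eqref{eq:UnperturbedPotentialA}; the same factor $2\pi$ is already missing in the paper's intermediate formula for $U_0(r)$, and the check $U_0(r)\sim-\pi/r$ as $r\to\infty$ (total mass $\pi$) confirms that $\tfrac{8}{\pi}$ is the correct constant. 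So do not try to force your normalization to match the displayed one; the discrepancy is a slip in the stated lemma (harmless elsewhere in the paper, which only uses qualitative properties of $U_0$ in Case \eqref{CaseA}), and your derivation, with the angular identity justified as above, is the correct one.
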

	Let us recall that $ \lim_{\ell\to \infty} \sqrt{\ell}W_\ell = \sqrt{\pi/2} $. Consequently, the series in \eqref{eq:UnperturbedPotentialA} converges also for the critical value $ r=1 $.
	\begin{proof}
		To this end, we use a multipole expansion for $ x=x(r,\theta,\varphi) $, $  y=y(s,\theta',\varphi') \in \R^3 $
		\begin{align*}
			\dfrac{1}{|x-y|} &= \dfrac{1}{\sqrt{r^2+s^2-2rs(\cos\theta \cos\theta' + \sin\theta \sin\theta' \cos(\varphi-\varphi'))}}
			\\
			&= \sum_{\ell\geq 0}\sum_{|m|\leq \ell} \dfrac{4\pi}{2\ell+1} \dfrac{(r\wedge s)^\ell}{(r\vee s)^{\ell+1}} Y_{\ell,m}(\theta,\varphi)Y_{\ell,m}(\theta',\varphi')^*,
		\end{align*}
		where the spherical harmonics are given by
		\begin{align*}
			Y_{\ell,m}(\theta,\varphi) &= \sqrt{\dfrac{2\ell+1}{4\pi} \dfrac{(\ell-m)!}{(\ell+m)!}} e^{im\varphi}P_\ell^m (\cos\theta).
		\end{align*}
		Here, $P_{\ell}^{m}$ are the associated Legendre polynomials. We have for $ \theta=\theta'=\pi/2 $
		\begin{align*}
			U_0(r) = -\sum_{\ell\geq 0}\dfrac{4\pi c_\ell^2}{2\ell+1} \, \int_0^1 \dfrac{(r\wedge s)^\ell \, sds}{(r\vee s)^{\ell+1}}, \quad c_\ell := Y_{\ell0}(\pi/2,0) = \sqrt{\dfrac{2\ell+1}{4\pi}}P_\ell(0).
		\end{align*}
		
		A computation shows that
		\begin{align*}
			P_\ell(0) = \begin{cases}
				(-1)^{\ell/2}\dfrac{(\ell-1)!!}{\ell!!} & \ell \text{ even},
				\\
				0 & \ell \text{ odd}.
			\end{cases}
		\end{align*}
		Rewriting the coefficients of the series in terms of the Wallis' formula and choosing $ \ell=2k $ yields both formulas in \eqref{eq:UnperturbedPotentialA}. The formula in \eqref{eq:UnperturbedPotentialB} follows by solving the Poisson equation $ \Delta U_0 = \ind_\D $.
	\end{proof}
	The following lemma contains information on the unperturbed potential $ U_0 $ in all cases considered.
	
	\begin{lem}\label{lem:PropertiesUnpertPot}
		The potential $ U_0 $ satisfies in Case \eqref{CaseA} and \eqref{CaseB} respectively
		\begin{enumerate}[(i)]
			\item $ U_0'(r)>0 $ for $ r>1 $,
			\item $ U_0''(r)<0 $ for $ r>1 $,
			\item $ r\mapsto U_0'(r)/r $ is strictly decreasing for $ r\geq1 $ and $ \lim_{r\to \infty} U_0'(r)/r=0 $.
 		\end{enumerate}
	\end{lem}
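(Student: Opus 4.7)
The plan is to reduce Case \eqref{CaseB} to the explicit formula of Lemma \ref{lem:UnperturbedPotential} and to tackle Case \eqref{CaseA} by rewriting $U_0'(r)$ as a single boundary integral over $\partial \D$ via the divergence theorem; all three properties will then follow from the reflection symmetry $\theta\leftrightarrow \pi-\theta$ on the unit circle. In Case \eqref{CaseB} the identity $U_0(r)=\pi\ln r$ for $r\geq 1$ immediately gives $U_0'(r)=\pi/r$, $U_0''(r)=-\pi/r^2$, and $U_0'(r)/r=\pi/r^2$, so nothing further is required; only Case \eqref{CaseA} demands work.

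For Case \eqref{CaseA}, the starting point is to differentiate $U_0(r)=-\int_{\D}|re_1-y|^{-\nu}\,dy$ under the integral, recognize that $\nu(r-y_1)|re_1-y|^{-\nu-2}=\partial_{y_1}|re_1-y|^{-\nu}$, and apply the divergence theorem to obtain
\begin{equation*}
U_0'(r)=\int_0^{2\pi}\cos\theta\,(r^2+1-2r\cos\theta)^{-\nu/2}\,d\theta,\qquad r>1.
\end{equation*}
Property (i) then follows by using the symmetry $\theta\leftrightarrow 2\pi-\theta$ to pass to $[0,\pi]$ and pairing $\theta$ with $\pi-\theta$: the combined integrand equals $\cos\theta\bigl[(r^2+1-2r\cos\theta)^{-\nu/2}-(r^2+1+2r\cos\theta)^{-\nu/2}\bigr]$, which is strictly positive on $(0,\pi/2)$ since $\cos\theta>0$ and the bracket is positive.

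The main obstacle will be (ii). Differentiating the boundary representation once more yields
\begin{equation*}
U_0''(r)=-\nu\int_0^{2\pi}\cos\theta\,(r-\cos\theta)(r^2+1-2r\cos\theta)^{-\nu/2-1}\,d\theta,
\end{equation*}
and applying the same pairing reduces the claim $U_0''(r)<0$ to the pointwise inequality $(r-u)B^{(\nu+2)/2}>(r+u)A^{(\nu+2)/2}$ for $u=\cos\theta\in(0,1]$, where $A=r^2+1-2ru$ and $B=r^2+1+2ru$. Taking logarithms, the function $\nu\mapsto (\nu+2)\ln(B/A)-2\ln[(r+u)/(r-u)]$ has positive derivative $\ln(B/A)>0$, so it suffices to check the inequality at $\nu=0$; this reduces to the algebraic identity $B(r-u)-A(r+u)=2u(r^2-1)>0$ for $r>1$, completing (ii).

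For (iii), (i) and (ii) combine to yield
\begin{equation*}
\Bigl(\frac{U_0'(r)}{r}\Bigr)'=\frac{rU_0''(r)-U_0'(r)}{r^2}<0\quad\text{for }r>1,
\end{equation*}
and the monotonicity extends to $r=1$ by passing to the limit $r\to 1^+$ (in the extended sense when $\nu=1$, since the boundary integral for $U_0'(r)$ diverges as $r\to 1^+$). Finally, $U_0'(r)/r\to 0$ as $r\to\infty$ follows from the leading-order multipole expansion $U_0(r)\sim -\pi r^{-\nu}$ (the total mass being $\pi$), which gives $U_0'(r)\sim\nu\pi r^{-\nu-1}$ and hence $U_0'(r)/r\sim\nu\pi r^{-\nu-2}\to 0$.
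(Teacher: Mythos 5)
Your proof is correct, and for Case \eqref{CaseA} it takes a genuinely different and more explicit route than the paper. The paper's proof merely points to the polar area integral $U_0(r)=-\int_0^1\int_0^{2\pi}(r^2+s^2-2rs\cos\varphi)^{-\nu/2}\,s\,ds\,d\varphi$, says ``the claims follow after some computations'' and invokes $r\geq s$; in fact differentiating that representation twice does not give a pointwise-signed integrand for (ii), so some further manipulation is tacitly being left to the reader. You instead integrate $\partial_{y_1}|re_1-y|^{-\nu}$ by the divergence theorem to pass from an area integral to the boundary integral $U_0'(r)=\int_0^{2\pi}\cos\theta\,(r^2+1-2r\cos\theta)^{-\nu/2}\,d\theta$, and then the reflection pairing $\theta\leftrightarrow\pi-\theta$ collapses each of (i) and (ii) to an elementary algebraic inequality; the $\nu$-monotonicity trick reducing (ii) to $B(r-u)-A(r+u)=2u(r^2-1)>0$ is particularly clean. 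This buys you a fully explicit and checkable argument where the paper gives only a sketch, and it parallels the boundary-integral identity $\nabla U_h(X)=-\int_{\partial E_h}n_h\,|X-y|^{-\nu}\,dS$ that the authors use later in the proof of Corollary \ref{cor:MainResult}. Two small remarks: the decay $U_0'(r)/r\to 0$ is cleanest read off directly from your boundary representation (write $U_0'(r)=r^{-\nu}\int_0^{2\pi}\cos\theta\,(1+r^{-2}-2r^{-1}\cos\theta)^{-\nu/2}\,d\theta$ and expand), which avoids any Tauberian-type caveat about differentiating an asymptotic for $U_0$; and your parenthetical about $U_0'(1^+)=+\infty$ when $\nu=1$ is accurate and worth keeping, since it explains in what sense monotonicity on $[1,\infty)$ is meant in that borderline case.
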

	\begin{proof}
		In Case \eqref{CaseA}, the claims follow after some computations using the explicit formula (as well as derivatives w.r.t. $ r>1 $)
		\begin{align*}
			U_0(r) = - \int_0^1\int_0^{2\pi} \dfrac{sdsd\varphi}{(r^2+s^2-2rs\cos\varphi)^{\nu/2}},
		\end{align*}
		and the fact that $ r\geq s $ in the integral. For Case \eqref{CaseB} one can use the explicit solution in Lemma \ref{lem:UnperturbedPotential}. 
	\end{proof}
	
	\section{Fr\'echet derivative of the main problem}\label{sec:Frechet}
	In this section we prove the Fr\'echet differentiability of the function $ \F $. We consider separately the stream function $ \phi_h $ and the interaction potential $ U_h\circ f_h $.
	
	\subsection{Fr\'echet derivative of the stream function}\label{linear:sec:stream}
	In this subsection we derive the Fr\'echet differential of the function $ h\mapsto \phi_h $.
	
	\begin{lem}\label{lem:StreamFunctFrechetDeriv}
		Let $ k\in \N_0 $ and $ \alpha\in (0,1) $. There exists $ \varepsilon_0>0 $ sufficiently small such that for $ B_{\varepsilon_0}\subset H_0^{k+2,\alpha} $
		\begin{align*}
			h\mapsto\phi_h \in C^1(B_{\varepsilon_0}; C^{k+2,\alpha}(\overline{\D})).
		\end{align*}
		More precisely, the linear operator $ D_h\phi_h $ is defined by $ g\mapsto D_h\phi_h[g] =:\bar{\phi} $ where 
		\begin{align}\label{eq:PotFlowDeriv}
			\begin{cases}
				\Delta \bar{\phi} = |f'_h|^2G'(\phi_h)\bar{\phi} + 2\Re\left[ (1+h')\overline{g'} \right] G(\phi_h) & \text{in } \D,
				\\
				\bar{\phi} = 0 & \text{on } \partial \D.
			\end{cases}
		\end{align}
	\end{lem}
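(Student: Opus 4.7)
The plan is to proceed in three stages: formally identify the candidate derivative and check that the linearised equation \eqref{eq:PotFlowDeriv} is well-posed; then estimate the remainder quadratically; finally verify continuity of $h\mapsto D_h\phi_h$.

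\textbf{Step 1 (candidate derivative and its well-posedness).} Formally differentiating $\Delta\phi_h=|f_h'|^2 G(\phi_h)$ with respect to $h$ in direction $g$, using $f_h'=1+h'$ and $\partial_h|f_h'|^2[g]=g'\overline{(1+h')}+(1+h')\overline{g'}=2\Re[(1+h')\overline{g'}]$, yields exactly \eqref{eq:PotFlowDeriv}. Since $G$ is non-decreasing, $G'(\phi_h)\ge 0$, so the associated bilinear form $B(u,w)=\int_\D(\nabla u\cdot\nabla w+|f_h'|^2G'(\phi_h)uw)\,dx$ is coercive on $H^1_0(\D)$. By Lax--Milgram one obtains a weak solution $\bar\phi$, and Schauder estimates together with Lemma~\ref{lem:WellPosedStreamFunction} (which controls $\phi_h$ in $C^{k+2,\alpha}$ uniformly for $h\in B_{\varepsilon_0}$) plus Lemma~\ref{lem:FaaDiBruno} to handle the composition $G'(\phi_h)$ give the bound
\[
\norm[C^{k+2,\alpha}(\overline{\D})]{\bar\phi}\le C\norm[k+2,\alpha]{g},
\]
uniform in $h\in B_{\varepsilon_0}$. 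This defines a bounded linear map $L_h:H_0^{k+2,\alpha}\to C^{k+2,\alpha}(\overline{\D})$.

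\textbf{Step 2 (quadratic remainder).} Set $R_h[g]:=\phi_{h+g}-\phi_h-L_h[g]$. Expand
\[
|f_{h+g}'|^2=|f_h'|^2+2\Re[(1+h')\overline{g'}]+|g'|^2,
\]
and Taylor-expand $G(\phi_{h+g})=G(\phi_h)+G'(\phi_h)(\phi_{h+g}-\phi_h)+G_2$ with an integral remainder $G_2$ quadratic in $\phi_{h+g}-\phi_h$. Subtracting the PDEs for $\phi_{h+g}$, $\phi_h$, $L_h[g]$ shows that $R_h[g]$ solves
\[
\Delta R_h[g]-|f_h'|^2 G'(\phi_h)R_h[g]=Q(h,g)\quad\text{in }\D,\qquad R_h[g]=0\ \text{on }\partial\D,
\]
where $Q(h,g)$ collects $|g'|^2 G(\phi_{h+g})$, $2\Re[(1+h')\overline{g'}][G(\phi_{h+g})-G(\phi_h)]$ and $|f_h'|^2 G_2$. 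A Lipschitz bound $\norm[C^{k+2,\alpha}]{\phi_{h+g}-\phi_h}\le C\norm[k+2,\alpha]{g}$, obtained by a parallel argument (subtract the two PDEs, use the comparison principle, Schauder, and the uniform bounds from Lemma~\ref{lem:WellPosedStreamFunction}), then yields $\norm[C^{k,\alpha}]{Q(h,g)}\le C\norm[k+2,\alpha]{g}^2$. Schauder estimates for the linear elliptic equation above (with non-negative zero-order coefficient) give $\norm[C^{k+2,\alpha}]{R_h[g]}\le C\norm[k+2,\alpha]{g}^2$, proving Fr\'echet differentiability with $D_h\phi_h=L_h$.

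\textbf{Step 3 (continuity of $L_h$).} For $h_1,h_2\in B_{\varepsilon_0}$ the difference $L_{h_1}[g]-L_{h_2}[g]$ satisfies an elliptic equation whose coefficient and source terms differ from those of $L_{h_2}$ by quantities controlled by $\norm[k+2,\alpha]{h_1-h_2}$ times $\norm[k+2,\alpha]{g}$; here one uses again Lemma~\ref{lem:FaaDiBruno} to differentiate $G'(\phi_{h_i})$ and $G(\phi_{h_i})$, together with the Lipschitz estimate for $h\mapsto\phi_h$ from Step~2. Schauder then gives $\norm[\mathcal{L}]{L_{h_1}-L_{h_2}}\le C\norm[k+2,\alpha]{h_1-h_2}$, hence $h\mapsto\phi_h$ is $C^1$.

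\textbf{Main obstacle.} The delicate point is the uniform (in $h$) Lipschitz estimate $\norm[C^{k+2,\alpha}]{\phi_{h+g}-\phi_h}\le C\norm[k+2,\alpha]{g}$ needed in Step~2: the elliptic equation satisfied by $\phi_{h+g}-\phi_h$ contains coefficients depending on $\phi_{h+g}$ through the semilinearity, and one must bootstrap through $L^\infty$ (via the maximum principle, using monotonicity of $G$), $C^\alpha$, and finally $C^{k+2,\alpha}$ by iterated Schauder estimates, carefully tracking the dependence of the constants on $h\in B_{\varepsilon_0}$.
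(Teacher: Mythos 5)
Your proposal is correct and follows essentially the same route as the paper: uniform Schauder estimates for the linearized problem (using $G'\geq 0$), a Lipschitz bound for $h\mapsto\phi_h$ obtained from the difference equation, a quadratic estimate for the remainder, and continuity of the derivative via the equation satisfied by $D_h\phi_{h_1}[g]-D_h\phi_{h_2}[g]$. The only cosmetic differences are your use of Lax--Milgram for existence and a slightly different grouping of the remainder's source terms, which is algebraically equivalent to the paper's computation.
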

	\begin{proof}
		First of all, the equation \eqref{eq:PotFlowDeriv} has a unique solution $ \bar{\phi} $, since $ G'\geq0 $. We apply Schauder estimates for the Laplacian and absorb the term $ |f'_h|^2G'(\phi_h)\bar{\phi} $ into the left hand side by choosing $ \norm[k+2,\alpha]{h}\leq \varepsilon_0 $ sufficiently small. This yields
		\begin{align}\label{bound1lemma3:1}
			\norm[k+2,\alpha]{\bar{\phi}}\leq C\norm[k+2,\alpha]{g},
		\end{align}
		where $ C>0 $ is independent of $ h\in B_{\varepsilon_0}\subset H_0^{k+2,\alpha} $ by Lemma \ref{lem:WellPosedStreamFunction}.
		
		Furthermore, by taking the difference of the equations for $ \phi_{h+g} $ and $ \phi_{h} $ we obtain
		\begin{align*}
			\left[ -\Delta+\int_0^1G'((1-t)\phi_h+t\phi_{h+g}) dt \right] (\phi_{h+g}-\phi_{h}) = -\left[ |f'_{h+g}|^2-|f_h'|^2 \right] G(\phi_{h+g}).
		\end{align*}
		Therefore, using Schauder estimates, we infer that
		\begin{align}\label{bound2lemma3:1}
			\norm[k+2,\alpha]{\phi_{h+g}-\phi_{h}}\leq C\norm[k+2,\alpha]{g},
		\end{align}
		where $ C>0 $ is independent of $ h $.
		
		Next we find that for $ D_h\phi_h[g]=\bar{\phi} $ and denoting $ R:=\phi_{h+g}-\phi_h-\bar{\phi} $
		\begin{align*}
			\Delta R &= \left( |f'_{h+g}|^2-|f'_{h}|^2-2\Re\left[ (1+h')\overline{g'} \right]  \right) G(\phi_{h})
			\\
			&\quad + \left( |f'_{h+g}|^2-|f'_{h}|^2 \right) G'(\phi_{h})\bar{\phi} + |f'_{h+g}|^2\left( G(\phi_{h+g})-G(\phi_{h})-G'(\phi_{h})\bar{\phi} \right) 
			\\
			&=|g'|^2G(\phi_{h}) + \left( |f'_{h+g}|^2-|f'_{h}|^2 \right) G'(\phi_{h})\bar{\phi} + G'(\phi_{h}) R 
			\\
			&\quad + \int_0^1G''((1-t)\phi_h+t\phi_{h+g}) \, dt \,  (\phi_{h+g}-\phi_{h})^2.
		\end{align*}
		Similarly as above, invoking Schauder estimates and bounds \eqref{bound1lemma3:1}-\eqref{bound2lemma3:1} we obtain (note that $ G\in C^{k+3}(\R;\R) $)
		\begin{align*}
			\norm[k+2,\alpha]{R}\leq C \norm[k+2,\alpha]{g}^2.
		\end{align*}
		Here, the constant $ C>0 $ is independent of $ h $.
		
		Finally, we need to prove that $ h\mapsto D_h\phi_h \in\mathcal{L}(H^{k+2,\alpha}_0;C^{k+2,\alpha}(\overline{\D})) $ is continuous. To this end, one has to consider differences of solutions to \eqref{eq:PotFlowDeriv} for $ h_1, h_2\in B_{\varepsilon_0} $. Applying Schauder estimates we find the bound
		\begin{align*}
			\norm[k+2,\alpha]{D_h\phi_{h_2}[g]-D_h\phi_{h_1}[g]} \leq C \norm[k+2,\alpha]{g} \left( \norm[k+2,\alpha]{h_1-h_2} + \norm[k+2,\alpha]{\phi_{h_1}-\phi_{h_2}}\right). 
		\end{align*}
		which shows the continuity property.
	\end{proof}
	
	\subsection{Fr\'echet derivative of the interaction potential}\label{linear:sec:potential}
	Here, we derive the Fr\'echet derivative of the mapping $ h\mapsto (U_h\circ f_h) (e^{i\varphi}) $. We give only the details of the proof of Case \eqref{CaseA} with $ \nu=1 $. The remaining cases can be shown in a similar way (and are in fact simpler since the integrals are less singular). We summarize the corresponding results for Case \eqref{CaseB} at the end of this section.
	
	First of all, it is convenient to apply a change of variables
	\begin{align*}
		(U_h\circ f_h) (e^{i\varphi}) = -\int_{\D} \dfrac{|f_h'(y)|^2}{|f_h(e^{i\varphi})-f_h(y)|^\nu}\, dy = -\int_{\D} \dfrac{|f_h'(e^{i\varphi}y)|^2}{|f_h(e^{i\varphi})-f_h(e^{i\varphi}y)|^\nu} \, dy.
	\end{align*}
	We then have the following result.
	\begin{pro}\label{pro:InterPotFrechetDeriv}
		Let $ U_h $ be defined as in Case \eqref{CaseA} and let $ k\in \N_0 $, $ \alpha\in (0,1) $. There exists $ \varepsilon_0>0 $ sufficiently small such that for $ h\in B_{\varepsilon_0}\subset H_0^{k+2,\alpha} $ we have that 
		\begin{align*}
			h\mapsto (U_h\circ f_h)(e^{i\varphi})\in C^1(B_{\varepsilon_0},C^{k+1,\alpha}(\T)) .
		\end{align*}
		More precisely, for $ h+g\in B_{\varepsilon_0} $ it holds that
		\begin{align*}
			D_h(U_h\circ f_h)[g](e^{i\varphi}) =& -\int_{\D}\dfrac{\sigma^1_h[g](\varphi,y)}{d_h(\varphi,y)^\nu}\, dy + \nu\int_{\D}\dfrac{\sigma^2_h[g](\varphi,y)}{d_h(\varphi,y)^{\nu+2}}\, |f_h'(e^{i\varphi}y)|^2 \, dy,
		\end{align*}
		where we define
		\begin{align}\label{eq:DefInterPotFrechetDeriv}
			\begin{split}
				d_h(\varphi,y) &:= |f_h(e^{i\varphi})-f_h(e^{i\varphi}y)|,
				\\
				\sigma^1_h[g](\varphi,y) &:= 2\Re \left[ (1+h'(e^{i\varphi}y)) \overline{g'(e^{i\varphi}y)}\right],
				\\
				\sigma^2_h[g](\varphi,y) &:= \Re \left[ \left( e^{i\varphi}(1-y) + h(e^{i\varphi})-h(e^{i\varphi}y) \right)\overline{\left( g(e^{i\varphi})-g(e^{i\varphi}y) \right)} \right].
			\end{split}
		\end{align}
	\end{pro}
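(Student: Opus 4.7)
The plan is to establish the differentiability by expanding $(U_{h+g}\circ f_{h+g})(e^{i\varphi})$ to first order in $g$, identifying the two integrals in the statement as the linear part, and then bounding both the linear operator and the remainder in $C^{k+1,\alpha}(\mathbb{T})$. The starting point is the algebraic identities
\begin{align*}
|f_{h+g}'(e^{i\varphi}y)|^{2} &= |f_{h}'(e^{i\varphi}y)|^{2} + \sigma_{h}^{1}[g](\varphi,y) + |g'(e^{i\varphi}y)|^{2},\\
d_{h+g}(\varphi,y)^{2} - d_{h}(\varphi,y)^{2} &= 2\sigma_{h}^{2}[g](\varphi,y) + |g(e^{i\varphi})-g(e^{i\varphi}y)|^{2},
\end{align*}
combined with a Taylor expansion of $t\mapsto t^{-\nu/2}$ around $t=d_h(\varphi,y)^{2}$. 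Matching the terms linear in $g$ yields exactly the formula claimed in the proposition, with the rest packaged as a remainder $\mathcal R(\varphi;h,g)$.

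The key analytic step is the uniform boundedness of the two singular integrals as maps into $C^{k+1,\alpha}(\mathbb{T})$. For $h\in B_{\varepsilon_0}$ with $\varepsilon_0$ small, Lemma \ref{lem:ConformalMapping} and the identity $f_h(e^{i\varphi})-f_h(e^{i\varphi}y)=e^{i\varphi}(1-y)+h(e^{i\varphi})-h(e^{i\varphi}y)$ give the two-sided bound $c_1|1-y|\leq d_h(\varphi,y)\leq c_2|1-y|$ uniformly in $\varphi$ and $h$. Moreover, since both factors in $\sigma_{h}^{2}[g]$ vanish linearly at $y=1$, we have $|\sigma_{h}^{2}[g](\varphi,y)|\lesssim |1-y|^{2}\,\norm[k+2,\alpha]{g}$, so that the integrands in both terms are pointwise bounded by a constant times $|1-y|^{-\nu}\,\norm[k+2,\alpha]{g}$, which is integrable on $\mathbb{D}$ for $\nu\in(0,1]$. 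To control derivatives in $\varphi$ up to order $k+1$, one differentiates under the integral; the crucial point is that $\partial_{\varphi}d_{h}(\varphi,y)^{2}$ is $O(|1-y|^{2})$ (one factor comes from the difference $f_h(e^{i\varphi})-f_h(e^{i\varphi}y)$, the other from the identity $i e^{i\varphi}[f_h'(e^{i\varphi})-yf_h'(e^{i\varphi}y)]=O(|1-y|)$), so that $\partial_{\varphi}d_{h}^{-\nu}$ remains $O(|1-y|^{-\nu})$. Iterating this observation, all derivatives through order $k+1$ produce integrands of the same integrable order $|1-y|^{-\nu}$, with multiplicative constants controlled by $\norm[k+2,\alpha]{h}$ and $\norm[k+2,\alpha]{g}$. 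H\"older continuity of the $(k+1)$-st derivative in $\varphi$ is then obtained by the standard near/far splitting with respect to the singular locus $y=1$, using the uniform pointwise bound on the small piece and Lipschitz estimates on the regular piece.

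For the remainder, the explicit Taylor expansion shows that $\mathcal R(\varphi;h,g)$ is a sum of quadratic-in-$g$ integrals whose integrands contain either $|g'|^{2}$, the quadratic contribution $|g(e^{i\varphi})-g(e^{i\varphi}y)|^{2}$, or products like $\sigma_{h}^{1}[g]\cdot\sigma_{h}^{2}[g]$, each divided by a suitable power of $d_h$. The same bounds as above give $\norm[k+1,\alpha]{\mathcal R(\cdot;h,g)}\leq C\,\norm[k+2,\alpha]{g}^{2}$, hence Fr\'echet differentiability with the asserted derivative. Continuity of $h\mapsto D_h(U_h\circ f_h)$ follows by applying the same scheme to the difference, replacing $g$-dependent quantities with differences such as $\sigma_{h_2}^{i}-\sigma_{h_1}^{i}$ and $d_{h_2}^{-\nu-k}-d_{h_1}^{-\nu-k}$, all of which are controlled linearly by $\norm[k+2,\alpha]{h_1-h_2}$ by interpolating between the two configurations.

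The main obstacle is the combination of the Coulomb-type singularity $d_{h}^{-\nu}$ (especially for the limiting case $\nu=1$) with the requirement of $C^{k+1,\alpha}$ regularity on the boundary: each $\varphi$-differentiation formally raises the order of singularity, and only the cancellation $\partial_{\varphi}d_{h}^{2}=O(|1-y|^{2})$ keeps the kernels integrable. Carrying this bookkeeping through $k+1$ derivatives, together with the H\"older estimate for the top derivative, is the technical heart of the proof; the analogous arguments for Case \eqref{CaseB} are strictly easier since the logarithmic kernel is non-singular in modulus.
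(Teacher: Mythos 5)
Your proposal is correct and its core mechanics are exactly the paper's: Taylor expand in $g$, read off the linear term, and control the singular kernels using the two-sided comparability $c\,|1-y|\le d_h(\varphi,y)\le C\,|1-y|$ together with the crucial second-order vanishing of $\sigma_h^2[g]$ and of $\partial_\varphi d_h^2$ at $y=1$. The differences are in the scaffolding. For the remainder, the paper writes $R(\varphi)=\int_{\D}\int_0^1(1-t)\frac{d^2}{dt^2}\bigl[-|f_{h+tg}'(e^{i\varphi}y)|^2/d_{h+tg}(\varphi,y)\bigr]\,dt\,dy$ and computes the second $t$-derivative once and for all in \eqref{eq:SecondDerivPotentialFormula}, which cleanly packages the quadratic terms $T^0,T^1,T^2$; you instead expand $|f_{h+g}'|^2$ and $d_{h+g}^2$ via the two algebraic identities and then Taylor expand $t\mapsto t^{-\nu/2}$, which produces the same quadratic contributions with somewhat more bookkeeping of cross terms. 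For the $C^{k+1,\alpha}(\T)$ estimate, the paper establishes H\"older-seminorm bounds of the integrand itself (Lemmas \ref{lem:AuxilEstimateLinearization1}, \ref{lem:DistanceWithFunctLemma} and \ref{lem:AuxilEstimateLinearization2}, with the Fa\`a di Bruno formula propagating the cancellation through all $\varphi$-derivatives up to order $k+1$ in the seminorm), so the H\"older bound of the integral follows by pulling the seminorm under the $y$-integral; you instead differentiate under the integral and prove the H\"older bound via a near/far splitting at $y=1$. Both deliver the estimate, but note that your far-piece Lipschitz argument implicitly invokes $\partial_\varphi^{k+2}$ of the kernel, where the cancellation degrades from $O(|1-y|^2)$ to $O(|1-y|^{1+\alpha})$ because $\partial_z^{k+2}f_h$ is only $C^\alpha$; the resulting kernel is still integrable for $\nu\le 1$ so your argument closes, but the paper's seminorm route sidesteps this top-order subtlety. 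Your treatment of the continuity of $h\mapsto D_h(U_h\circ f_h)$ (estimating differences by interpolating between configurations) is a reasonable and slightly more explicit variant of the paper's Step~2, which instead cuts out the singularity uniformly and invokes smooth dependence of the regular part.
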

	From now on we restrict ourselves to the case $ \nu=1 $. In order to prove Proposition \ref{pro:InterPotFrechetDeriv} it is convenient to introduce the following notation
	\begin{align*}
		e_h(\varphi,y)= d_h(\varphi,y)^2 = |f_h(e^{i\varphi})-f_h(e^{i\varphi}y)|^2.
	\end{align*}
	Furthermore, we need the following computation with $ t\in [0,1] $
	\begin{align}\label{eq:SecondDerivPotentialFormula}
		\begin{split}
			\dfrac{d^2}{dt^2}\left[ -\dfrac{|f'_{h+tg}(e^{i\varphi}y)|^2}{d_{h+tg}(\varphi,y)}\right] =& T^0_{h+tg,g}(\varphi,y) + T^1_{h+tg,g}(\varphi,y) +T^2_{h+tg,g}(\varphi,y),
			\\
			T^0_{h+tg,g}(\varphi,y) :=& -\dfrac{2|g'(e^{i\varphi}y)|^2}{d_{h+tg}(\varphi,y)},
			\\
			T^1_{h+tg,g}(\varphi,y) :=& \dfrac{\tau^1_{h+tg,g}(\varphi,y)}{d_{h+tg}(\varphi,y)^3},
			\\
			\tau^1_{h+tg,g}(\varphi,y) :=& 2\, \sigma^1_{h+tg}[g](\varphi,y) \, \sigma^2_{h+tg}[g](\varphi,y) + |f'_{h+tg}(e^{i\varphi}y)|^2|g(e^{i\varphi})-g(e^{i\varphi}y)|^2,
			\\
			T^2_{h+tg,g}(\varphi,y) :=& \dfrac{\tau^2_{h+tg,g}(\varphi,y)}{d_{h+tg}(\varphi,y)^5},
			\\
			\tau^2_{h+tg,g}(\varphi,y) :=& -3|f'_{h+tg}(e^{i\varphi}y)|^2 (\sigma^2_{h+tg}(\varphi,y))^2.
		\end{split}
	\end{align}
	
	\begin{lem}\label{lem:AuxilEstimateLinearization1}
		Let $ k\in \N_0 $, $ \alpha\in [0,1) $. For $ \varepsilon_0>0 $ sufficiently small and $ g,h\in  B_{\varepsilon_0}\subset H_0^{k+2,\alpha} $ the following estimates hold
		\begin{enumerate}[(i)]
			\item For $ \ell\in \N_0,\,  \ell \leq k+1 $, $ y\in \D $ we have
			\begin{align*}
				\sup_{t\in[0,1]} \left[ \sigma^2_{h+tg}[g](\cdot,y) \right]_{\ell,\alpha} \leq C\norm[k+2,\alpha]{g} |1-y|^{2-\alpha}.
			\end{align*}
			
			\item For $ \ell\in \N_0,\,  \ell \leq k+1 $, $ m=1,2 $, $ y\in \D $ we have
			\begin{align*}
				\sup_{t\in[0,1]} \left[ \tau^m_{h+tg}[g](\cdot,y) \right]_{\ell,\alpha} \leq C\norm[k+2,,\alpha]{g}^2 |1-y|^{2m-\alpha}.
			\end{align*}
			
			\item For $ \ell\in \N_0,\,  \ell \leq k+1 $, $ y\in \D $ we have
			\begin{align*}
				\sup_{t\in[0,1]} \left[ e_{h+tg}[g](\cdot,y) \right]_{\ell,\alpha} \leq C|1-y|^{2-\alpha}.
			\end{align*}
		\end{enumerate}
	\end{lem}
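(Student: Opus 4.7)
The plan is to isolate a factor $(1-y)$ from every difference of analytic functions that appears in $\sigma^{2}_{h+tg}[g]$, $\tau^{m}_{h+tg}[g]$ and $e_{h+tg}$, and then to show that the remaining smooth part is bounded in $C^{k+1,\alpha}(\T)$ uniformly in $y\in\D$ and $t\in[0,1]$. The $|1-y|^{2-\alpha}$, respectively $|1-y|^{2m-\alpha}$, factor in the statement will in fact follow from a stronger integer-power estimate.

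The key observation is the following integral representation. For any analytic $F\in H^{k+2,\alpha}(\D)$,
\begin{align*}
F(e^{i\varphi})-F(e^{i\varphi}y)=e^{i\varphi}(1-y)\,\tilde F(\varphi,y),\qquad \tilde F(\varphi,y):=\int_{0}^{1}F'\!\left(e^{i\varphi}\bigl(y+(1-y)u\bigr)\right)du,
\end{align*}
where $e^{i\varphi}(y+(1-y)u)\in\overline{\D}$ for all $\varphi\in\T$, $y\in\D$, $u\in[0,1]$ by convexity. Applied to $F=f_{h+tg}$ and $F=g$, this gives
\begin{align*}
A_{h+tg}(\varphi,y) &:= f_{h+tg}(e^{i\varphi})-f_{h+tg}(e^{i\varphi}y)=e^{i\varphi}(1-y)\,\tilde A_{h+tg}(\varphi,y),\\
B(\varphi,y) &:= g(e^{i\varphi})-g(e^{i\varphi}y)=e^{i\varphi}(1-y)\,\tilde B(\varphi,y).
\end{align*}
Plugging these into the definitions \eqref{eq:DefInterPotFrechetDeriv} and \eqref{eq:SecondDerivPotentialFormula} and using $e^{i\varphi}\overline{e^{i\varphi}}=1$, $(1-y)\overline{(1-y)}=|1-y|^{2}$, one obtains the clean factorizations
\begin{align*}
\sigma^{2}_{h+tg}[g] &= |1-y|^{2}\,\Re\bigl[\tilde A_{h+tg}\,\overline{\tilde B}\bigr],\qquad e_{h+tg}=|1-y|^{2}\,|\tilde A_{h+tg}|^{2},\\
\tau^{1}_{h+tg,g} &= |1-y|^{2}\Bigl(2\sigma^{1}_{h+tg}[g]\,\Re[\tilde A_{h+tg}\overline{\tilde B}]+|f'_{h+tg}|^{2}\,|\tilde B|^{2}\Bigr),\\
\tau^{2}_{h+tg,g} &= -3|1-y|^{4}\,|f'_{h+tg}|^{2}\,\bigl(\Re[\tilde A_{h+tg}\overline{\tilde B}]\bigr)^{2}.
\end{align*}

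The remaining task is a uniform bound
\begin{align*}
\sup_{y\in\D}\|\tilde F(\cdot,y)\|_{C^{k+1,\alpha}(\T)}\leq C\,\|F\|_{C^{k+2,\alpha}(\overline{\D})}.
\end{align*}
This is carried out by differentiating under the integral sign. Writing $z_{u}:=e^{i\varphi}(y+(1-y)u)$, one has $\partial_{\varphi}=iz_{u}\partial_{z}$, so each $\partial_{\varphi}^{j}[F'(z_{u})]$ is a polynomial in $z_{u}$ (with $|z_{u}|\leq 1$) and in $F^{(2)}(z_{u}),\dots,F^{(j+1)}(z_{u})$. For $j\leq k+1$ the highest derivative that enters is $F^{(k+2)}$, which belongs to $C^{\alpha}(\overline{\D})$ by assumption; this yields both the $C^{k+1}_{\varphi}$ bound and the $\alpha$-Hölder control of $\partial_{\varphi}^{k+1}\tilde F$, uniformly in $y\in\D$. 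Applied to $F=f_{h+tg}$ and $F=g$, together with the smallness $\|h+tg\|_{k+2,\alpha}\leq 2\varepsilon_{0}$, this gives
\begin{align*}
\sup_{t\in[0,1]}\|\tilde A_{h+tg}(\cdot,y)\|_{C^{k+1,\alpha}(\T)}\leq C,\qquad \|\tilde B(\cdot,y)\|_{C^{k+1,\alpha}(\T)}\leq C\|g\|_{k+2,\alpha}.
\end{align*}

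Finally, since $|1-y|$ is independent of $\varphi$, the $(\ell,\alpha)$ seminorm commutes with the prefactor: for example
\begin{align*}
[\sigma^{2}_{h+tg}[g](\cdot,y)]_{\ell,\alpha}=|1-y|^{2}\,[\Re(\tilde A_{h+tg}\overline{\tilde B})(\cdot,y)]_{\ell,\alpha},
\end{align*}
and the product rule in $C^{\ell,\alpha}(\T)$ together with the previous uniform bounds yields
$[\sigma^{2}_{h+tg}[g](\cdot,y)]_{\ell,\alpha}\leq C\|g\|_{k+2,\alpha}|1-y|^{2}\leq C\|g\|_{k+2,\alpha}|1-y|^{2-\alpha}$, since $|1-y|\leq 2$. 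The same reasoning, combined with analogous uniform estimates for $\sigma^{1}_{h+tg}[g]$ and $|f'_{h+tg}|^{2}$ (which carry no $(1-y)$ factor but only involve pointwise evaluations of $g'$, $h'$ and $f'_{h+tg}$) gives the bounds on $\tau^{1}$ and $\tau^{2}$, and $e_{h+tg}=|1-y|^{2}|\tilde A_{h+tg}|^{2}$ handles (iii). The main obstacle is the uniform-in-$y$ Hölder control of $\tilde F$; once the integral representation allows us to move the singularity at $y=1$ into an explicit $(1-y)$-prefactor, the remaining estimates reduce to a direct application of Leibniz, the product rule in Hölder spaces, and the a priori regularity of $h$ and $g$.
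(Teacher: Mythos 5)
Your proof is correct, but it takes a genuinely different route from the paper. The paper's proof consists of a single interpolation inequality: for $u\in H^1(\D)$,
\begin{align*}
\frac{1}{|\varphi_2-\varphi_1|^\alpha}\bigl|\,[u(e^{i\varphi_1}y)-u(e^{i\varphi_1})]-[u(e^{i\varphi_2}y)-u(e^{i\varphi_2})]\,\bigr|\leq \bigl(2\|u'\|_\infty\bigr)^\alpha\bigl(2|1-y|\,\|u'\|_\infty\bigr)^{1-\alpha}\leq 2\|u\|_{C^1}|1-y|^{1-\alpha},
\end{align*}
obtained by bounding the double difference both by $C|\varphi_1-\varphi_2|\,\|u'\|_\infty$ (grouping in $\varphi$) and by $C|1-y|\,\|u'\|_\infty$ (grouping in $y$), then taking the geometric mean with exponents $\alpha$, $1-\alpha$. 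Each difference factor thus contributes $|1-y|^{1-\alpha}$ to the Hölder seminorm and $|1-y|$ to the sup norm, and applying Leibniz to the two (resp.\ four) such factors in $\sigma^2$, $e$, $\tau^m$ yields the stated $|1-y|^{2-\alpha}$ (resp.\ $|1-y|^{2m-\alpha}$). You instead factor out the singularity exactly: $F(e^{i\varphi})-F(e^{i\varphi}y)=e^{i\varphi}(1-y)\tilde F(\varphi,y)$ with $\tilde F$ the averaged derivative, and you use the identity $\partial_\varphi z_u=iz_u$ to bound $\tilde F(\cdot,y)$ in $C^{k+1,\alpha}(\T)$ uniformly in $y$. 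This trades generality for sharpness: the paper's interpolation needs only $u\in C^1$ and produces the fractional exponent directly, whereas your argument exploits the holomorphy of $g$, $h$, $f_h$ (which this problem supplies) to obtain the slightly stronger integer powers $|1-y|^2$, $|1-y|^{2m}$, from which the stated bounds follow via $|1-y|\le 2$. Both approaches then reduce the $\ell$-th order seminorms to Faà di Bruno and the Hölder product rule; the paper leaves those details implicit, while you spell them out. Since only the behavior near $y=1$ matters for the ensuing singular-integral estimates, and your bound is sharper there, the two arguments are fully interchangeable.
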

	\begin{proof}
		The proof of this lemma is straightforward. Indeed, one can readily check the bounds by means of the following general estimate. For any $ u\in H^{1}(\D) $ we have that
		\begin{align*}
			\dfrac{1}{|\varphi_2-\varphi_1|^\alpha} &\left| \left[ u(e^{i\varphi_1}y)-u(e^{i\varphi_1}) \right] - \left[ u(e^{i\varphi_2}y)-u(e^{i\varphi_2}) \right] \right| 
			\\
			&\leq \left( 2\norm[\infty]{u'} \right)^\alpha \left( 2|1-y| \norm[\infty]{u'} \right)^{1-\alpha} \leq 2\norm[C^1]{u} |1-y|^{1-\alpha}.
		\end{align*}
	\end{proof}
	The next lemma is also useful.
	\begin{lem}\label{lem:DistanceWithFunctLemma}
		Let $ k\in \N_0 $, $ \alpha\in [0,1) $. There is a sufficiently small $ \varepsilon_0>0 $ such that for any $ h\in B_{\varepsilon_0} \subset H_0^{k+2,\alpha} $ and $ y\in \D $ we have that
		\begin{align}\label{eq:DistanceWithFunctLemmaLowerBound}
			d_h(\varphi,y) \geq c|1-y|.
		\end{align}
		Furthermore, for any $ q\in \N $, $ n\in \N_0, \, n\leq k+1 $ and $ y\in \D $ estimate
		\begin{align*}
			\semnorm[n,\alpha]{\dfrac{1}{d_h(\cdot,y)^q}} \leq \dfrac{C_{k,q}}{|1-y|^{q+\alpha}}.
		\end{align*}
  holds. Both $ c>0 $ and $ C_{k,q>}>0 $ are independent of $ h $.
	\end{lem}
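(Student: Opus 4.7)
For the lower bound \eqref{eq:DistanceWithFunctLemmaLowerBound} I start from the identity
\begin{align*}
d_h(\varphi,y) = |e^{i\varphi}(1-y) + h(e^{i\varphi})-h(e^{i\varphi}y)|.
\end{align*}
Applying the mean value theorem on the segment from $e^{i\varphi}y$ to $e^{i\varphi}$ inside $\overline{\D}$ gives $|h(e^{i\varphi})-h(e^{i\varphi}y)|\le \norm[\infty]{h'}|1-y|$. Since $h\in H^{k+2,\alpha}_{0}$ with norm at most $\varepsilon_0$, I have $\norm[\infty]{h'}\le C\varepsilon_0$, so choosing $\varepsilon_0$ small enough that $C\varepsilon_0\le \tfrac12$ yields $d_h(\varphi,y)\ge \tfrac12|1-y|$ with a constant independent of $h$ and $\varphi$.

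The strategy for the seminorm bound is to view $1/d_h^{q}=\Phi(e_h)$ with $\Phi(u)=u^{-q/2}$ and $e_h=d_h^2$ (which is smooth as a function of $\varphi$, unlike $d_h$ itself). Applying the Fa\`a di Bruno formula of Lemma \ref{lem:FaaDiBruno} in the variable $\varphi$, the $n$-th derivative $\partial_\varphi^{n}\Phi(e_h(\cdot,y))$ is a finite sum of terms of the form
\begin{align*}
c_{\ell_1,\ldots,\ell_n}\,\Phi^{(\ell)}(e_h)\,\prod_{j=1}^{n}\bigl(\partial_\varphi^{j}e_h\bigr)^{\ell_j},\qquad \ell=\sum_j \ell_j,\quad \sum_j j\ell_j=n.
\end{align*}
The two input ingredients are: (a) derivatives of $\Phi$ satisfy $|\Phi^{(\ell)}(u)|\le C_\ell u^{-q/2-\ell}$, so combined with the already proved lower bound, $|\Phi^{(\ell)}(e_h)|\le C|1-y|^{-q-2\ell}$; (b) Lemma \ref{lem:AuxilEstimateLinearization1}(iii) applied with $g=0$ and $\alpha=0$ and with the given $\alpha$, which yields $\norm[\infty]{\partial_\varphi^{j}e_h(\cdot,y)}\le C|1-y|^{2}$ and $\semnorm[0,\alpha]{\partial_\varphi^{j}e_h(\cdot,y)}\le C|1-y|^{2-\alpha}$ for $j\le k+1\ge n$.

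For a single product term, the $L^{\infty}$ size is $|1-y|^{-q-2\ell}\cdot|1-y|^{2\ell}=|1-y|^{-q}$, which bounds $\norm[\infty]{\partial_\varphi^{n}\Phi(e_h)}$ by $C|1-y|^{-q}$. To control the $(0,\alpha)$-seminorm of the $n$-th derivative, I use the Leibniz rule for H\"older seminorms $\semnorm[0,\alpha]{uv}\le \norm[\infty]{u}\semnorm[0,\alpha]{v}+\semnorm[0,\alpha]{u}\norm[\infty]{v}$ and the chain-rule estimate $\semnorm[0,\alpha]{\Phi^{(\ell)}(e_h)}\le \norm[\infty]{\Phi^{(\ell+1)}(e_h)}\semnorm[0,\alpha]{e_h}\le C|1-y|^{-q-2\ell-\alpha}$. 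Distributing the H\"older seminorm over the $\ell+1$ factors of the product, every resulting summand has the form
\begin{align*}
|1-y|^{-q-2\ell-\alpha}\cdot|1-y|^{2\ell}\quad\text{or}\quad |1-y|^{-q-2\ell}\cdot|1-y|^{2\ell-\alpha},
\end{align*}
both equal to $|1-y|^{-q-\alpha}$. Summing the finitely many Fa\`a di Bruno contributions (the total count depending only on $k$ and $q$) gives the desired bound $\semnorm[n,\alpha]{1/d_h(\cdot,y)^{q}}\le C_{k,q}|1-y|^{-q-\alpha}$, with constants independent of $h\in B_{\varepsilon_0}$.

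The main obstacle is purely bookkeeping: making sure that in the Fa\`a di Bruno expansion the singularity $|1-y|^{-q-2\ell-\alpha}$ coming from the outer function $\Phi^{(\ell)}$ is exactly compensated by the vanishing factor $|1-y|^{2\ell}$ generated by the $\ell$ derivatives of $e_h$, regardless of the multi-index $(\ell_1,\ldots,\ell_n)$. Once this cancellation is observed, the estimates reduce to a direct application of Lemma \ref{lem:AuxilEstimateLinearization1}(iii) and the chain/Leibniz rules for H\"older seminorms.
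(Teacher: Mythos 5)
Your proof is correct and takes essentially the same approach as the paper: the mean value theorem for the lower bound, Fa\`a di Bruno applied to $\Phi\circ e_h$ with $\Phi(u)=u^{-q/2}$, the bounds from Lemma \ref{lem:AuxilEstimateLinearization1}~(iii), and a Leibniz/chain-rule distribution of the H\"older seminorm over the factors. The only (cosmetic) difference is in the bookkeeping: the paper regroups each Fa\`a di Bruno term as $d_h^{-q}\prod_j\bigl(d_h^{-2}\partial_\varphi^{j}e_h\bigr)^{\ell_j}$ and reuses the separately proved $n=0$ case $\semnorm[0,\alpha]{1/d_h}\le C|1-y|^{-1-\alpha}$ to control the H\"older seminorm of each factor, whereas you keep $\Phi^{(\ell)}(e_h)$ as one block and bound its H\"older seminorm directly via the mean value inequality $\semnorm[0,\alpha]{\Phi^{(\ell)}(e_h)}\le\norm[\infty]{\Phi^{(\ell+1)}(e_h)}\semnorm[0,\alpha]{e_h}$; both give the same cancellation and the same final exponent $-q-\alpha$.
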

	\begin{proof}
		The first assertion follows using the mean-value theorem and choosing $ \varepsilon_0>0 $ sufficiently small. To prove the second assertion we first consider $ n=0 $. It then suffices to consider $ q=1 $. We have
		\begin{align*}
			\dfrac{1}{|\varphi_1-\varphi_2|^\alpha}\left| \dfrac{1}{d_h(\varphi_1,y)} -\dfrac{1}{d_h(\varphi_2,y)} \right| &= 	\dfrac{|d_h(\varphi_1,y)-d_h(\varphi_2,y)|}{|\varphi_1-\varphi_2|^\alpha}\dfrac{1}{d_h(\varphi_1,y)d_h(\varphi_2,y)}
			\\
			&\leq \dfrac{C}{|1-y|^2}\dfrac{|f_h(e^{i\varphi_1})-f_h(e^{i\varphi_2})+f_h(e^{i\varphi_1}y)-f_h(e^{i\varphi_2}y)|}{|\varphi_1-\varphi_2|^\alpha}
			\\
			&\leq \dfrac{C}{|1-y|^{1+\alpha}}.
		\end{align*}
		We now compute the $ n $-th order derivative, $ n\leq k+1 $. By Fa\`a di Bruno's formula, cf. Lemma~\ref{lem:FaaDiBruno}, applied to the composition of the functions $ x\mapsto x^{-q/2} $ and $ e_h=d_h^2 $ the preceding expression is a sum of terms of the form
		\begin{align*}
			\dfrac{1}{d_h(\varphi,y)^{2(\ell_1+\cdots+\ell_{n})+q}} \prod_{j=1}^{n} \left( \dfrac{d^{j}}{d\varphi^{j}}e_h(\varphi,y) \right)^{\ell_j} = \dfrac{1}{d_h(\varphi,y)^{q}}\prod_{j=1}^{n} \left( \dfrac{1}{d_h(\varphi,y)^2}\dfrac{d^{j}}{d\varphi^{j}}e_h(\varphi,y) \right)^{\ell_j},
		\end{align*}
		where $ \ell_1,\ell_2,\ldots, \ell_{n}\in \N_0 $ satisfy $  \ell_1+2 \ell_2+\cdots +n\ell_{n}=n $. The supremum norm of each term in the product $ j=1,\ldots,n $ is bounded due to Lemma \ref{lem:AuxilEstimateLinearization1} (iii) and \eqref{eq:DistanceWithFunctLemmaLowerBound}.
		
		We estimate now the seminorm $ \semnorm[\alpha]{\cdot} $ of this expression. Note that for products only one term is estimated in this seminorm while the other terms are estimated in the supremum norm. For the seminorm we apply Lemma \ref{lem:AuxilEstimateLinearization1} (iii) and the case $ n=0 $ we discussed above. Hence, the seminorm is bounded up to a constant by $ |1-y|^{-q-\alpha} $.
	\end{proof}
	
	As a result of the previous lemmas we obtain the following estimates.
	\begin{lem}\label{lem:AuxilEstimateLinearization2}
		Let $ k\in \N_0 $ and $ \alpha\in (0,1) $. We have for sufficiently small $ \varepsilon_0>0 $, $ g,h\in  B_{\varepsilon_0}\subset H_0^{k+2,\alpha} $ and $ m=0,1,2 $
		\begin{align*}
			\norm[k+1,\alpha] {\dfrac{\sigma^1_h[g](\cdot,y)}{d_h(\cdot,y)}}+\norm[k+1,\alpha] {\dfrac{\sigma^2_h[g](\cdot,y)}{d_h(\cdot,y)^3}} &\leq C\norm[k+2,\alpha]{g} |1-y|^{-1-\alpha},
			\\
			\norm[k+1,\alpha] {T^m_{h+tg,g}(\cdot,y)} &\leq C\norm[k+2,\alpha]{g}^2 |1-y|^{-1-\alpha}.
		\end{align*}
		The constant $ C>0 $ is independent of $ h,\, g $.
	\end{lem}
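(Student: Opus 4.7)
My plan is to estimate each expression by writing it as a product and applying the Leibniz rule for the $C^{k+1,\alpha}$-norm, controlling each factor using Lemma \ref{lem:AuxilEstimateLinearization1} and Lemma \ref{lem:DistanceWithFunctLemma}. The key observation is that these two lemmas give matched vanishing and blow-up rates in $|1-y|$: the numerators vanish to order $2$ or higher, and the inverse powers of $d_h$ blow up to a corresponding order, so that the products behave like $|1-y|^{-1-\alpha}$ after using $|1-y| \leq 2$ to trade a factor of $|1-y|^{-1}$ for $|1-y|^{-1-\alpha}$.

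First I would record the auxiliary bound for $\sigma^1_h[g]$ itself. Since $\sigma^1_h[g](\varphi,y) = 2\Re[(1+h'(e^{i\varphi}y))\overline{g'(e^{i\varphi}y)}]$ and $h,g \in H_0^{k+2,\alpha}$, the composition with the smooth map $\varphi \mapsto e^{i\varphi}y$ combined with the algebra property of $C^{k+1,\alpha}(\T)$ gives $\sup_{t\in[0,1]}\|\sigma^1_{h+tg}[g](\cdot,y)\|_{k+1,\alpha} \leq C\|g\|_{k+2,\alpha}$, uniformly in $y \in \D$. Similarly $\||g'(e^{i\varphi}y)|^2\|_{k+1,\alpha} \leq C\|g\|_{k+2,\alpha}^2$. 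Combining this supremum-norm bound with the Hölder seminorm bound $[\sigma^2_{h+tg}[g](\cdot,y)]_{\ell,\alpha} \leq C\|g\|_{k+2,\alpha}|1-y|^{2-\alpha}$ (applied also with $\alpha=0$) yields $\|\sigma^2_{h+tg}[g](\cdot,y)\|_{k+1,\alpha} \leq C\|g\|_{k+2,\alpha}|1-y|^2$ after absorbing the $|1-y|^\alpha$ factor via $|1-y|\leq 2$; the analogous bounds hold for $\tau^m_{h+tg,g}$ with weight $|1-y|^{2m}$.

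For the bound on $\sigma^1_h[g]/d_h$, I would use the Leibniz inequality
\begin{align*}
\|uv\|_{k+1,\alpha} \leq C\bigl(\|u\|_{k+1,\alpha}\|v\|_{C^{k+1}} + \|u\|_{C^{k+1}}\|v\|_{k+1,\alpha}\bigr)
\end{align*}
with $u = \sigma^1_h[g](\cdot,y)$ and $v = 1/d_h(\cdot,y)$. Using $\|\sigma^1_h[g](\cdot,y)\|_{k+1,\alpha} \leq C\|g\|_{k+2,\alpha}$ together with $\|1/d_h(\cdot,y)\|_{C^{k+1}} \leq C|1-y|^{-1}$ and $[1/d_h(\cdot,y)]_{k+1,\alpha} \leq C|1-y|^{-1-\alpha}$ from Lemma \ref{lem:DistanceWithFunctLemma}, and trading $|1-y|^{-1}$ for $|1-y|^{-1-\alpha}$ since $y\in\D$, gives the desired weight $|1-y|^{-1-\alpha}$. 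For $\sigma^2_h[g]/d_h^3$, the numerator contributes weight $|1-y|^{2-\alpha}$ (seminorm) or $|1-y|^2$ (supremum), while $1/d_h^3$ contributes $|1-y|^{-3-\alpha}$ or $|1-y|^{-3}$; the two cross-terms in the Leibniz inequality both produce $|1-y|^{-1-\alpha}$ (up to a factor of $|1-y|^{-1}$ which is reabsorbed).

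The treatment of $T^0, T^1, T^2$ is identical in spirit: each $T^m_{h+tg,g}$ is of the form $\tau^m_{h+tg,g}/d_{h+tg}^{2m+1}$ (interpreting $\tau^0 := -2|g'|^2$ with weight $|1-y|^0$, giving the exceptional $1/d_h$ in $T^0$), and by Lemma \ref{lem:AuxilEstimateLinearization1} the numerator carries a weight $|1-y|^{2m-\alpha}$ in seminorm and $|1-y|^{2m}$ in supremum with an extra factor $\|g\|_{k+2,\alpha}^2$, so the Leibniz argument yields exactly $\|g\|_{k+2,\alpha}^2|1-y|^{-1-\alpha}$. The main technical obstacle is bookkeeping all the cross-terms from the Leibniz rule uniformly in $t \in [0,1]$ and $h \in B_{\varepsilon_0}$, but since every bound used is already uniform in these parameters via the preceding lemmas, this reduces to a routine chain of products of scalar estimates.
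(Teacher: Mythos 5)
Your proof is correct and follows essentially the same route as the paper, which dispatches the lemma as a direct application of Lemmas \ref{lem:AuxilEstimateLinearization1} and \ref{lem:DistanceWithFunctLemma}; you have simply spelled out the Leibniz bookkeeping that the paper leaves implicit. One minor slip: the intermediate claim $\|\sigma^2_{h+tg}[g](\cdot,y)\|_{k+1,\alpha}\leq C\|g\|_{k+2,\alpha}|1-y|^2$ goes in the wrong direction --- since $|1-y|\leq 2$, the seminorm contribution $|1-y|^{2-\alpha}$ dominates and the correct combined bound is $|1-y|^{2-\alpha}$, not $|1-y|^2$ --- but this is inconsequential because your subsequent Leibniz split correctly pairs the seminorm weight ($|1-y|^{2-\alpha}$) with the supremum weight of $1/d_h^3$ and vice versa, producing the stated $|1-y|^{-1-\alpha}$ in both cross-terms.
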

	\begin{proof}
		The previous lemmas can be applied without difficulty. Note that in the case of $ T^2_{h+tg,g} $ there is the factor $ d_{h+tg}^5 $ in the denominator. This is compensated by the extra factor in Lemma~\ref{lem:AuxilEstimateLinearization1} (ii) for $ m=2 $.
	\end{proof}
	
	With the previous lemmas we can give the proof of Proposition \ref{pro:InterPotFrechetDeriv}.
	\begin{proof}[Proof of Proposition \ref{pro:InterPotFrechetDeriv}]
	We consider only $ \nu=1 $. For the sake of the exposition we divide the proof into two steps.
	
 	\textit{Step 1.} We first show the Fr\'echet differentiability. We write 
		\begin{align*}
			R(\varphi) :=& (U_{h+g}\circ f_{h+g})(e^{i\varphi})-(U_h\circ f_h)(e^{i\varphi})-D_h(U_h\circ f_h)[g](e^{i\varphi})
			\\
			=& \int_{\D}\int_0^1(1-t)\dfrac{d^2}{dt^2}\left[ -\dfrac{|f'_{h+tg}(e^{i\varphi}y)|^2}{d_{h+tg}(\varphi,y)}\right] \, dt dy
			\\
			=& \int_{\D}\int_0^1(1-t)\left[T^0_{h+tg,g}(\varphi,y) + T^1_{h+tg,g}(\varphi,y) +T^2_{h+tg,g}(\varphi,y)\right] \, dt dy,
		\end{align*}
		recalling \eqref{eq:SecondDerivPotentialFormula}. We apply Lemma \ref{lem:AuxilEstimateLinearization2} to get (note that $ -1-\alpha>-2 $)
		\begin{align*}
			\norm[k+1,\alpha]{R}\leq C\norm[k+2,\alpha]{g}^2,
		\end{align*}
		which implies the Fr\'echet differentiability.
		
		\textit{Step 2.} Now, we show that $ h\mapsto D_h(U_h\circ f_h)[g] $ is continuous. First of all, one can estimate using Lemma \ref{lem:AuxilEstimateLinearization2}
		\begin{align*}
			\norm[k+1,\alpha]{D_h(U_h\circ Y_h)[g]} \leq C_k \norm[k+2,\alpha]{g}.
		\end{align*}
		where $ C_k>0 $ is independent of $ h,\, g\in B_{\varepsilon_0}\subset H_0^{k+2,\alpha} $. We can use these estimates to cut out the singularity in the integral uniformly in $ h,\, g\in B_{\varepsilon_0} $. The remaining integrand is then a smooth function with respect to $ h $. As a consequence it is continuous in $ h $. The above bounds show that this is also uniform in $ \norm[k+2,\alpha]{g} $, which ensures these estimates in the operator norm. Hence, $ h\mapsto D_h(U_h\circ f_h)[\cdot](e^{i\varphi}) \in\mathcal{L}(H^{k+2,\alpha}_0;C^{k+1,\alpha}(\T)) $ is continuous.
	\end{proof}
	Let us give the corresponding result of Proposition \ref{pro:InterPotFrechetDeriv} in Case \eqref{CaseB}. To this end, we write
	\begin{align*}
		(U_h\circ f_h) (e^{i\varphi}) = \int_{\D} |f_h'(y)|^2 \, \ln |f_h(e^{i\varphi})-f_h(y)|\, dy = \int_{\D} |f_h'(e^{i\varphi}y)|^2 \, \ln|f_h(e^{i\varphi})-f_h(e^{i\varphi}y)| \, dy.
	\end{align*}
	\begin{pro}\label{pro:InterPotFrechetDerivB}
		Let $ U_h $ be defined as in Case \eqref{CaseB} and let $ k\in \N_0 $, $ \alpha\in (0,1) $. There exists $ \varepsilon_0>0 $ sufficiently small such that for $ h\in B_{\varepsilon_0}\subset H_0^{k+2,\alpha} $ we have that
		\begin{align*}
			h\mapsto (U_h\circ f_h)(e^{i\varphi})\in C^1(B_{\varepsilon_0},C^{k+1,\alpha}(\T)).
		\end{align*}
		More precisely, for $ h+g\in B_{\varepsilon_0} $ it holds that
		\begin{align*}
			D_h(U_h\circ f_h)[g](e^{i\varphi}) =& \int_{\D} \sigma^1_h[g](\varphi,y) \, \ln d_h(\varphi,y)\, dy + \int_{\D}\dfrac{\sigma^2_h[g](\varphi,y)}{d_h(\varphi,y)^2} \, |f_h'(e^{i\varphi}y)|^2\, dy,
		\end{align*}
		where $\sigma^1_h[g], \, \sigma^2_h[g]$ and $d_h(\varphi,y)$ are given in \eqref{eq:DefInterPotFrechetDeriv}.
	\end{pro}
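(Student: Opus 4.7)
The plan is to follow closely the proof of Proposition \ref{pro:InterPotFrechetDeriv}, replacing the singular kernel $1/d^\nu$ with $\ln d$, which is strictly less singular. The formula for $D_h(U_h\circ f_h)[g]$ is obtained by differentiating
\[
|f'_{h+tg}(e^{i\varphi}y)|^2\,\ln d_{h+tg}(\varphi,y)
\]
at $t=0$: the prefactor yields $\sigma^1_h[g]\,\ln d_h$ exactly as in Case \eqref{CaseA}, while the logarithmic factor contributes $\sigma^2_h[g]/d_h^2$ through the identity $\partial_t\ln|a(t)|=\Re(\bar a\dot a)/|a|^2$ applied to $a(t)=f_{h+tg}(e^{i\varphi})-f_{h+tg}(e^{i\varphi}y)$, since $\dot a=g(e^{i\varphi})-g(e^{i\varphi}y)$ gives $\Re(\bar a\dot a)=\sigma^2_h[g]$ by \eqref{eq:DefInterPotFrechetDeriv}.

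For the remainder $R(\varphi)=(U_{h+g}\circ f_{h+g})(e^{i\varphi})-(U_h\circ f_h)(e^{i\varphi})-D_h(U_h\circ f_h)[g](e^{i\varphi})$ one writes
\[
R(\varphi)=\int_\D\int_0^1(1-t)\,\dfrac{d^2}{dt^2}\bigl[|f'_{h+tg}(e^{i\varphi}y)|^2\,\ln d_{h+tg}(\varphi,y)\bigr]\,dt\,dy.
\]
Using $\ddot a=0$ and Leibniz, the integrand is a sum of terms playing the role of $T^0,T^1,T^2$ in \eqref{eq:SecondDerivPotentialFormula}, with milder singularities: $|g'|^2\ln d_{h+tg}$, a mixed piece $\sigma^1_{h+tg}[g]\,\sigma^2_{h+tg}[g]/d_{h+tg}^2$ together with $|f'_{h+tg}|^2\,|g(e^{i\varphi})-g(e^{i\varphi}y)|^2/d_{h+tg}^2$, and $|f'_{h+tg}|^2(\sigma^2_{h+tg}[g])^2/d_{h+tg}^4$. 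Each is estimated in $C^{k+1,\alpha}$ by combining Lemma \ref{lem:AuxilEstimateLinearization1} (which supplies factors $|1-y|^{2-\alpha}$ from $\sigma^2$ and $|1-y|^{1-\alpha}$ from the $g$-differences) with Lemma \ref{lem:DistanceWithFunctLemma} (controlling $1/d_h^q$), yielding pointwise-in-$y$ bounds of order $\|g\|_{k+2,\alpha}^2|1-y|^{-\alpha}$ or $\|g\|_{k+2,\alpha}^2|1-y|^{-2\alpha}$, both integrable on $\D$ for $\alpha\in(0,1)$.

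The only new ingredient specific to Case \eqref{CaseB} is controlling $\ln d_h$ in $C^{k+1,\alpha}(\T)$. A Fa\`a di Bruno expansion (Lemma \ref{lem:FaaDiBruno}) applied to $\ln$ composed with $e_h=d_h^2$, together with $e_h\ge c|1-y|^2$ and the cancellations $|\partial_\varphi^j e_h|\le C|1-y|^2$ coming from $e_h(\varphi,y)=|f_h(e^{i\varphi})-f_h(e^{i\varphi}y)|^2$, shows that derivatives of $\ln d_h(\cdot,y)$ of order $\ge 1$ are uniformly bounded in $y$, so that
\[
\|\ln d_h(\cdot,y)\|_{k+1,\alpha}\le C\bigl(1+|\ln|1-y||\bigr),
\]
which is integrable. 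Combining everything gives $\|R\|_{k+1,\alpha}\le C\|g\|_{k+2,\alpha}^2$, and the continuity of $h\mapsto D_h(U_h\circ f_h)[g]$ then follows by the cut-off argument near $y=1$ used in Step~2 of the proof of Proposition \ref{pro:InterPotFrechetDeriv}. The main technical issue is nothing more than this logarithmic estimate, which is qualitatively tamer than the power-type singularity of Case \eqref{CaseA}, so no machinery beyond what was developed in Section \ref{sec:Frechet} is needed.
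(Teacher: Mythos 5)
Your proof is correct and implements exactly what the paper indicates without detail, namely that Case \eqref{CaseB} follows the same line as Case \eqref{CaseA} with $1/d_h^\nu$ replaced by $\ln d_h$ and all singularities one order milder, hence still integrable. The only imprecision is in your estimate $\norm[k+1,\alpha]{\ln d_h(\cdot,y)}\le C\left(1+|\ln|1-y||\right)$: via the Fa\`a di Bruno expansion combined with Lemma \ref{lem:AuxilEstimateLinearization1}(iii) (or Lemma \ref{lem:DistanceWithFunctLemma}), the $\alpha$-seminorm of the top-order derivative of $\ln d_h(\cdot,y)$ contributes an extra factor $|1-y|^{-\alpha}$, so the correct bound near $y=1$ is $\norm[k+1,\alpha]{\ln d_h(\cdot,y)}\le C|1-y|^{-\alpha}$; this is still integrable on $\D$ for $\alpha\in(0,1)$, so the conclusion is unaffected.
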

	
	\subsection{Fr\'echet derivative of the full problem}\label{linear:sec:fullproblem}
	Here, we give compute the Fr\'echet derivative of the second and third component of $ \F $ in \eqref{eq:FullFunction}.
	
	For the second component note that the continuous differentiability of $ (h,X)\mapsto \nabla U_h(X) $ involves no complications since $ X=(a,0) $ is assumed to be close to $ X_0=(a_0,0) $ with $ a_0\geq 2 $. Hence, $ \nabla U_h $ is smooth on a neighborhood of $ X_0 $ and
	\begin{align*}
		\nabla U_h(X) = \nu\int_{E_h}\dfrac{X-y}{|X-y|^{\nu+2}}\, dy = \nu\int_{\D}\dfrac{X-f_h(y)}{|X-f_h(y)|^{\nu+2}}|f_h'(y)|^2\, dy,
	\end{align*}
	in Case \eqref{CaseA} and
	\begin{align*}
		\nabla U_h(X) = \int_{\D}\dfrac{X-f_h(y)}{|X-f_h(y)|^2}|f_h'(y)|^2\, dy,
	\end{align*}
	in Case \eqref{CaseB}. Here, we identify $ f_h $ with a function $ \D\to \R^2 $. We obtain the following result for the Fr\'echet derivative (we again identify $ X=(a,0)\in\R^2\backsimeq\C $).
	\begin{lem}\label{lem:LinearizationSecondComponent3D}
		Let $ U_h $ be defined as in Case \eqref{CaseA} or Case \eqref{CaseB} and $ k\in \N_0 $, $ \alpha\in (0,1) $. The map $ (h,a)\mapsto \partial_{x_1} U_h(a,0) $ is continuously differentiable for $ |a-a_0|\leq \varepsilon_0 $, $ h\in B_{\varepsilon_0}\subset H_0^{k+2,\alpha} $, $ \varepsilon_0>0 $ sufficiently small, with derivative
		\begin{align*}
			D_{(h,a)} \left( \partial_{x_1} U_h(a,0) \right) [g,b] =& \partial_{x_1}^2U_h(a,0)b + W_{h,a}[g].
		\end{align*}
		The function $ W_{h,a}[g] $ is defined by
		\begin{align*}
			W_{h,a}[g]:=& \nu\int_{\D} \dfrac{-\Re[g(y)] |f_h'(y)|^2+2\Re[(1+h'(y))\overline{g'(y)}]\, \Re[a-f_h(y)] }{|a-f_h(y)|^{\nu+2}}\, dy
			\\
			& -\nu(\nu+2)\int_{\D} \dfrac{\Re[(a-f_h(y))\overline{g(y)}]\, \Re[a-f_h(y)]}{|a-f_h(y)|^{\nu+4}} |f_h'(y)|^2\, dy
		\end{align*}
		in Case \eqref{CaseA} and by
		\begin{align*}
			W_{h,a}[g]:=& \int_{\D} \dfrac{-\Re[g(y)] |f_h'(y)|^2+2\Re[(1+h'(y))\overline{g'(y)}]\, \Re[a-f_h(y)] }{|a-f_h(y)|^2}\, dy
			\\
			& -\int_{\D} \dfrac{2\Re[(a-f_h(y))\overline{g(y)}]\, \Re[a-f_h(y)]}{|a-f_h(y)|^4} |f_h'(y)|^2\, dy
		\end{align*}
		in Case \eqref{CaseB}, respectively.
	\end{lem}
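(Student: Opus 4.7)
The plan is to exploit the fact that, in contrast to Propositions~\ref{pro:InterPotFrechetDeriv} and \ref{pro:InterPotFrechetDerivB} where the integral is evaluated on the boundary of $E_h$, here the particle position $X=(a,0)$ stays uniformly separated from $E_h=f_h(\D)$. This completely regularises the integrand, so the argument reduces essentially to differentiation under the integral sign over a fixed domain.

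First, I would establish a uniform separation estimate: for sufficiently small $\varepsilon_0>0$ and all $h\in B_{\varepsilon_0}\subset H_0^{k+2,\alpha}$, $|a-a_0|\leq \varepsilon_0$, and $y\in \overline{\D}$, one has
\begin{align*}
|a-f_h(y)|\geq a_0-1-2\varepsilon_0\geq c>0,
\end{align*}
using $a\geq a_0-\varepsilon_0\geq 2-\varepsilon_0$ and $|f_h(y)|\leq 1+\norm[\infty]{h}\leq 1+\varepsilon_0$. With this in hand, the integrand in
\begin{align*}
\partial_{x_1}U_h(a,0)=\nu\int_{\D}\frac{\Re[a-f_h(y)]}{|a-f_h(y)|^{\nu+2}}\,|f_h'(y)|^2\, dy
\end{align*}
(and analogously in Case \eqref{CaseB}) is smooth in $(h,a,y)$ on the fixed domain $\D$.

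The $a$-derivative is then immediate and yields $\partial_{x_1}^2U_h(a,0)b$. For the $h$-derivative I would expand $f_{h+g}=f_h+g$, $|f_{h+g}'|^2=|f_h'|^2+2\Re[(1+h')\overline{g'}]+|g'|^2$, and
\begin{align*}
|a-f_h-g|^{-(\nu+2)}=|a-f_h|^{-(\nu+2)}\left(1+(\nu+2)\frac{\Re[(a-f_h)\overline{g}]}{|a-f_h|^2}+O(\norm[\infty]{g}^2)\right),
\end{align*}
then collect the linear-in-$g$ terms and multiply by $\nu$ to obtain the expression for $W_{h,a}[g]$. The Case \eqref{CaseB} formula is derived in the same way, replacing $-1/|\cdot|^{\nu}$ by $\ln|\cdot|$ in the potential.

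To pass from the Gateaux to the Fr\'echet derivative, I would bound the second $t$-derivative of $t\mapsto \frac{\Re[a-f_{h+tg}(y)]}{|a-f_{h+tg}(y)|^{\nu+2}}|f_{h+tg}'(y)|^2$ uniformly in $y\in \overline{\D}$, $t\in [0,1]$, which delivers a remainder of order $\norm[k+2,\alpha]{g}^2$ via the embedding $H_0^{k+2,\alpha}\subset C^1(\overline{\D})$ and the uniform lower bound on $|a-f_{h+tg}(y)|$. Continuity of $(h,a)\mapsto D_{(h,a)}\partial_{x_1}U_h(a,0)$ in the operator norm then follows from dominated convergence together with the same separation estimate. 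I do not expect any significant obstacle: because the separation $|a-f_h(y)|\geq c>0$ eliminates the near-diagonal singularity that had to be handled carefully in Section~\ref{linear:sec:potential}, this lemma is strictly simpler than its boundary counterparts, and the only real work lies in the bookkeeping of the chain-rule terms so that they match the stated form of $W_{h,a}[g]$.
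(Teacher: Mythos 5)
Your approach is correct and coincides with the one the authors have in mind: just before the lemma they remark that the separation $a_0\geq 2$ makes $\nabla U_h$ smooth near $X_0$, write out the explicit integral formulas, and then state that the proof is omitted because it "can be easily checked." Your uniform lower bound $|a-f_h(y)|\geq a_0-1-2\varepsilon_0>0$ followed by differentiation under the integral sign, a Taylor bound on the second $t$-derivative to upgrade Gateaux to Fr\'echet, and dominated convergence for continuity, is exactly that argument. The only thing to flag is a sign. Your expansion
\begin{align*}
|a-f_h-g|^{-(\nu+2)}=|a-f_h|^{-(\nu+2)}\left(1+(\nu+2)\frac{\Re[(a-f_h)\overline{g}]}{|a-f_h|^2}+O\big(\norm[\infty]{g}^2\big)\right)
\end{align*}
is correct, but after multiplying by $\nu\,\Re[a-f_h(y)]\,|f_h'(y)|^2$ and integrating, it produces the term
\begin{align*}
+\,\nu(\nu+2)\int_{\D} \dfrac{\Re[(a-f_h(y))\overline{g(y)}]\, \Re[a-f_h(y)]}{|a-f_h(y)|^{\nu+4}} |f_h'(y)|^2\, dy,
\end{align*}
with a plus sign, whereas the stated $W_{h,a}[g]$ has $-\nu(\nu+2)$ (and similarly $-2$ instead of $+2$ in Case \eqref{CaseB}). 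A one-dimensional sanity check $\frac{d}{dt}(a-b-t)^{-(\nu+2)}\big|_{t=0}=(\nu+2)(a-b)^{-(\nu+3)}>0$ confirms the plus sign. So the signs in the displayed formula for $W_{h,a}[g]$ appear to be typos; your chain-rule bookkeeping does not reproduce them, and you should not force it to.
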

	
	 The mass constrain, i.e. the third component of $ \F $, leads to the following Fr\'echet derivative.
	\begin{lem}\label{lem:LinearizationMass}
		The Fr\'echet derivative of the map
		\begin{align*}
			h\mapsto |E_h| = \int_{\D} |f_h'(x)|^2 \, dx
		\end{align*} 
		is given by
		\begin{align*}
			g\mapsto 2\Re \int_{\D} (1+h')\overline{g'}\, dx.
		\end{align*}
	\end{lem}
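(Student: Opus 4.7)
The proof should be a direct and essentially elementary computation, so the plan is to expand $|f_{h+g}'|^2$ algebraically and read off the linear part. Writing $f_h(z)=z+h(z)$ gives $f_h'=1+h'$, hence
\begin{align*}
|f_{h+g}'|^2 - |f_h'|^2
= (1+h'+g')\overline{(1+h'+g')} - (1+h')\overline{(1+h')}
= 2\Re\bigl[(1+h')\overline{g'}\bigr] + |g'|^2.
\end{align*}
Integrating over $\D$ then identifies the candidate linear term $g\mapsto 2\Re\int_{\D}(1+h')\overline{g'}\,dx$, with a pointwise remainder $|g'|^2$.

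The first step is to check that the candidate is a bounded linear map $H_0^{k+2,\alpha}\to\R$. Since $h\in B_{\varepsilon_0}\subset H_0^{k+2,\alpha}$, the function $1+h'$ is bounded on $\overline{\D}$ (for instance by $1+\norm[k+2,\alpha]{h}$), and Cauchy--Schwarz together with the continuous embedding $H_0^{k+2,\alpha}\hookrightarrow C^1(\overline{\D})$ yields
\begin{align*}
\left|2\Re\int_{\D}(1+h')\overline{g'}\,dx\right|
\leq 2\pi\,(1+\norm[k+2,\alpha]{h})\,\norm[C^1(\overline{\D})]{g}
\leq C\,\norm[k+2,\alpha]{g}.
\end{align*}

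The second step is the quadratic remainder estimate. From the identity above,
\begin{align*}
|E_{h+g}|-|E_h| - 2\Re\int_{\D}(1+h')\overline{g'}\,dx = \int_{\D} |g'|^2\,dx \leq \pi\,\norm[\infty]{g'}^2 \leq C\,\norm[k+2,\alpha]{g}^2,
\end{align*}
so the remainder is $o(\norm[k+2,\alpha]{g})$ as $\norm[k+2,\alpha]{g}\to 0$. This establishes Fr\'echet differentiability with the derivative stated in the lemma.

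The third and final step is continuity of $h\mapsto D|E_h|$ as a map into $\mathcal{L}(H_0^{k+2,\alpha},\R)$. For $h_1,h_2\in B_{\varepsilon_0}$,
\begin{align*}
\bigl|D|E_{h_2}|[g]-D|E_{h_1}|[g]\bigr|
= \left|2\Re\int_{\D}(h_2'-h_1')\overline{g'}\,dx\right|
\leq C\,\norm[k+2,\alpha]{h_2-h_1}\,\norm[k+2,\alpha]{g},
\end{align*}
which gives the desired continuity in the operator norm. There is no genuine obstacle here: the main point is simply to be careful about the real-part convention (the area of $E_h=f_h(\D)$ equals $\int_{\D}|f_h'|^2\,dx$ by the change-of-variables formula for conformal maps, which is already implicit in the statement), after which everything reduces to expanding a squared modulus.
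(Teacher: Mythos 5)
Your computation is correct and is precisely the elementary expansion the paper has in mind when it remarks that the proof of this lemma ``can be easily checked'' (the paper in fact omits the proof entirely). The algebraic identity $|f_{h+g}'|^2-|f_h'|^2=2\Re[(1+h')\overline{g'}]+|g'|^2$, the $O(\norm[k+2,\alpha]{g}^2)$ remainder bound, and the Lipschitz continuity of $h\mapsto D|E_h|$ are all exactly what one needs, and you have them right.
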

	We omitted the proof of the previous Lemmas since they can be easily checked. Finally, the following differentiability result follows from combining Lemma \ref{lem:StreamFunctFrechetDeriv}, Proposition \ref{pro:InterPotFrechetDeriv}, respectively Proposition \ref{pro:InterPotFrechetDerivB}, and Lemmas \ref{lem:LinearizationSecondComponent3D}, \ref{lem:LinearizationMass}.
	\begin{pro}\label{pro:MainProbFrechetDeriv}
		Let $ k\in \N_0 $, $ \alpha\in (0,1) $. There is $ \varepsilon_0>0 $ sufficiently small such that $ \F\in C^1(U;\mathbb{Z}^{k+1,\alpha})  $, where
		\begin{align*}
			U=B_{\varepsilon_0}(0)\times (a_0-\varepsilon_0,a_0+\varepsilon_0)\times (\lambda_0-\varepsilon_0,\lambda_0+\varepsilon_0) \subset \mathbb{X}^{k+2,\alpha}.
		\end{align*}
	\end{pro}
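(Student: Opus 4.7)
The plan is to decompose $\F$ into the three independent components appearing in \eqref{eq:FullFunction} and verify continuous Fréchet differentiability in $(h,a,\lambda,m)$ for each component separately. Since $\mathbb{Z}^{k+1,\alpha}$ is a product space, this reduces to treating each coordinate in its own target space, and continuity of the differential will be read off from the continuity statements in the cited lemmas. Dependence on $\lambda$ is trivial (linear), and dependence on $m$ is also straightforward because it enters only affinely through $m\,U_X\circ f_h$ with $X$ strictly separated from $\overline{E_h}$.

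First I would handle the second and third components quickly. For the mass constraint $h\mapsto |f_h(\D)|-\pi$, Lemma \ref{lem:LinearizationMass} together with the fact that $h\mapsto 1+h'$ is continuous from $H_0^{k+2,\alpha}$ into $C^{k+1,\alpha}(\overline{\D})$ yields continuous differentiability into $\R$. For the Newton-equation component $\Omega_0^2 a - \partial_{x_1}U_h(X)$, the map $a\mapsto \Omega_0^2 a$ is linear, and since $|a-a_0|\le \varepsilon_0$ with $a_0\ge 2$ the integrand defining $\nabla U_h(X)$ is smooth and nonsingular for $h\in B_{\varepsilon_0}$; differentiation under the integral together with Lemma \ref{lem:LinearizationSecondComponent3D} delivers a $C^1$ map into $\R$.

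The main work sits in the first component, a function on $\T$. I would write it as a sum of five pieces: $\tfrac12|\nabla\phi_h|^2/|f_h'|^2$, $-\tfrac{\Omega_0^2}{2}|f_h|^2$, $U_h\circ f_h$, $mU_X\circ f_h$, and $-\lambda$, each restricted to $z=e^{i\varphi}$. For the first piece, Lemma \ref{lem:StreamFunctFrechetDeriv} gives $h\mapsto \phi_h\in C^1(B_{\varepsilon_0};C^{k+2,\alpha}(\overline{\D}))$, so $\nabla\phi_h\in C^{k+1,\alpha}(\overline{\D})$ depends $C^1$ on $h$; the denominator $|f_h'|^2=|1+h'|^2$ is a smooth (in fact polynomial) map of $h$ into $C^{k+1,\alpha}(\overline{\D})$ and is bounded away from zero for $h\in B_{\varepsilon_0}$ by Lemma \ref{lem:ConformalMapping}, so the algebra property and inversion in the Banach algebra $C^{k+1,\alpha}$ give a $C^1$ map into $C^{k+1,\alpha}(\overline{\D})$. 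Composing with the continuous linear trace $u\mapsto u(e^{i\varphi})$ from $C^{k+1,\alpha}(\overline{\D})$ into $C^{k+1,\alpha}(\T)$ then yields a $C^1$ map into $C^{k+1,\alpha}(\T)$. The centrifugal term $|f_h|^2|_{z=e^{i\varphi}}=|e^{i\varphi}+h(e^{i\varphi})|^2$ is a smooth map into $C^{k+1,\alpha}(\T)$ since $h\mapsto h(e^{i\varphi})$ is continuous linear. For $U_h\circ f_h$ I would invoke Proposition \ref{pro:InterPotFrechetDeriv} in Case \eqref{CaseA} and Proposition \ref{pro:InterPotFrechetDerivB} in Case \eqref{CaseB} directly. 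Finally, for $mU_X\circ f_h$, since $|f_h(e^{i\varphi})-X|$ is bounded below uniformly for $h\in B_{\varepsilon_0}$ and $|a-a_0|\le\varepsilon_0$, the integrand-free expression $mU_X\circ f_h$ is jointly smooth in $(h,a,m)$ as a map into $C^{k+1,\alpha}(\T)$, by direct inspection of the formulas \eqref{potentials3D}--\eqref{potentials2D} and the chain rule.

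The main obstacle, already resolved upstream, is the Fréchet differentiability of $h\mapsto U_h\circ f_h$ and of $h\mapsto\phi_h$; given those, the present proposition is essentially an assembly. The only care required is verifying that compositions and products of $C^1$ maps into $C^{k+1,\alpha}$ remain $C^1$ into $C^{k+1,\alpha}$ (using that this space is a Banach algebra and that traces to $\T$ are bounded linear), and that the lower bound $|f_h'|\ge c>0$ on $B_{\varepsilon_0}$ is preserved for $\varepsilon_0$ small enough so that division makes sense. Choosing $\varepsilon_0$ to be the minimum of those appearing in Lemmas \ref{lem:ConformalMapping}, \ref{lem:StreamFunctFrechetDeriv}, Propositions \ref{pro:InterPotFrechetDeriv}/\ref{pro:InterPotFrechetDerivB}, and Lemma \ref{lem:LinearizationSecondComponent3D} concludes the proof.
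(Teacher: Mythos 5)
Your proposal is correct and follows the same route as the paper, which states this proposition as an immediate combination of Lemma \ref{lem:StreamFunctFrechetDeriv}, Propositions \ref{pro:InterPotFrechetDeriv}/\ref{pro:InterPotFrechetDerivB}, and Lemmas \ref{lem:LinearizationSecondComponent3D}, \ref{lem:LinearizationMass} without further elaboration; you simply make explicit the routine assembly steps (Banach-algebra property of $C^{k+1,\alpha}$, boundedness of the trace to $\T$, nonvanishing of $f_h'$ on $B_{\varepsilon_0}$, and the affine dependence on $\lambda$ and $m$) that the paper leaves implicit.
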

	
	\section{Invertibility of the linearized operator}\label{sec:inver}
	In order to apply the implicit function theorem we need to invert the linearized operator at the point $ (0,X_0,\lambda_0,0) $, i.e. the linear operator
	\begin{align}\label{eq:FrechetDerivativAtZero}
		\mathbb{X}^{k+2,\alpha}\to \mathbb{Z}^{k+1,\alpha}: \, 	(g,b,\mu)\mapsto D_{(h,a,\lambda)}\F(0,a_0,\lambda_0,0)[g,b,\mu].
	\end{align}
	We recall that the functional spaces are defined in \eqref{eq:DefSpaces}. It is convenient to write the function $ g\in H_0^{k+2,\alpha} $ using power series of the form
	\begin{align}\label{eq:PowerSeriesExp}
		g(z) = \sum_{n\geq0}\hat{g}_nz^{n+1}.
	\end{align}
	Recall that $ g(0)=0 $ and $ g'(0)=\hat{g}_0\in \R $ since $ g\in H^{k+2,\alpha}_0 $.
	\begin{rem}\label{rem:ConformalMapPowerSeries}
		Let us briefly comment on the form of the power series \eqref{eq:PowerSeriesExp}. 
		\begin{enumerate}[(i)]
			\item We choose an index shift in the coefficients of \eqref{eq:PowerSeriesExp}, in order that the linearized operator \eqref{eq:FrechetDerivativAtZero} is diagonalized when using a Fourier decomposition, cf. Lemma \ref{lem:LinearizationZero}.
			\item With this choice the coefficient $ \hat{g}_0 $ corresponds to a rescaling $ z\mapsto (1+\hat{g}_0)z $. Consequently, it appears in the linearization of $ h\mapsto |E_h| $, cf. Lemma \ref{lem:LinearizationZero}. In the Fourier series it appears as the zeroth coefficient.
			\item Furthermore, infinitesimal translations are given by the conformal mappings $ T_\varepsilon: z\mapsto z+\varepsilon $ for small $ \varepsilon>0 $. In order to satisfy the conditions $ T_\varepsilon(0)=0 $ and $ T'_\varepsilon(0)\in \R $ we use a Blaschke factor, see \eqref{eq:Blaschke}, yielding the conformal mapping
			\begin{align}\label{eq:ConformalMapTranslation}
				z\mapsto \dfrac{z-\varepsilon}{1-\varepsilon z}+\varepsilon = \dfrac{z-\varepsilon^2z}{1-\varepsilon z} = z + h_\varepsilon(z), \quad h_\varepsilon(z) = \dfrac{\varepsilon z^2-\varepsilon^2 z}{1-\varepsilon z} = \varepsilon z^2 + \mathcal{O}(\varepsilon^2)
			\end{align}
			as $\varepsilon\to0$. In particular, infinitesimal translations correspond to the coefficient of $ z^2 $, i.e. $ \hat{g}_1 $ in \eqref{eq:PowerSeriesExp}. In Fourier series they correspond to the coefficients for $ e^{\pm i\varphi} $, which is $ \hat{g}_1 $ respectively $ \overline{\hat{g}_1} $ in the linearization, cf. Lemma \ref{lem:LinearizationZero}.
		\end{enumerate}
	\end{rem}
	The main result of this section is the following proposition.
	\begin{pro}\label{pro:InvertLinOp}
		Let $ k\in \N_0 $, $ \alpha\in(0,1) $. The operator \eqref{eq:FrechetDerivativAtZero} is an isomorphism under the assumptions of Theorem \ref{thm:MainThm}.
	\end{pro}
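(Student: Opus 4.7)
The plan is to construct an explicit bounded inverse of the linearization by diagonalizing it in Fourier modes on $\T$, reducing to countably many scalar multiplier equations coupled with two finite-dimensional constraints (the mass equation and Newton's law for the particle).

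First, I would combine Lemma \ref{lem:StreamFunctFrechetDeriv}, Propositions \ref{pro:InterPotFrechetDeriv} and \ref{pro:InterPotFrechetDerivB}, and Lemmas \ref{lem:LinearizationSecondComponent3D} and \ref{lem:LinearizationMass} to write $D\F(0,a_0,\lambda_0,0)[g,b,\mu]$ explicitly; this is the content of the forthcoming Lemma \ref{lem:LinearizationZero}. Expanding $g(z)=\sum_{n\geq 0}\hat g_n z^{n+1}$ as in \eqref{eq:PowerSeriesExp} and using the rotational invariance of the unperturbed state, the operator at each angular mode $e^{in\varphi}$ reduces to a scalar map. For the stream-function piece, restricting \eqref{eq:PotFlowDeriv} to mode $e^{in\varphi}$ decouples it into the radial ODEs \eqref{eq:LinearizationStreamFunctODE} whose boundary derivatives are exactly the $A'_{|n|}(1)$ appearing in \eqref{eq:FourierMultipliers}. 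Consequently, the first component is diagonal: its $n$-th Fourier coefficient on $\T$ equals $\omega_n\hat g_{|n|}$ for $n\neq 0$ and $\omega_0\hat g_0-\mu$ for $n=0$. By Lemma \ref{lem:LinearizationMass} the third component reduces to $2\pi\hat g_0$, and by Lemma \ref{lem:LinearizationSecondComponent3D} the second component reads $(\Omega_0^2-\partial_{x_1}^2 U_0(a_0))\,b-W_{0,a_0}[g]$ with $W_{0,a_0}[g]$ linear in $g$; crucially, since $m=0$ at the base point, the variable $b$ does not enter the first component.

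Given an arbitrary target $(F_1,F_2,F_3)\in\mathbb{Z}^{k+1,\alpha}$ with Fourier coefficients $\hat F_{1,n}$ of $F_1$, I would invert the block-diagonal system in four steps. (a) The mass equation uniquely determines $\hat g_0=F_3/(2\pi)\in\R$. (b) For every $n\geq 1$, the non-resonance condition \eqref{eq:NonResonanceAssumption} gives $\hat g_n=\hat F_{1,n}/\omega_n$. Combining the hypothesis $\phi_0'(1)\neq 0$ from \eqref{eq:ConditionUnperturbedStreamFunction} with Lemmas \ref{lem:AsymptotcisCoeff} and \ref{lem:AsymptotcisCoeffGravPot} gives the asymptotics $\omega_n\sim -\tfrac12\phi_0'(1)^2(|n|+1)$ as $|n|\to\infty$, so the multiplier $\hat F_{1,n}\mapsto\hat g_n$ is of order $-1$; Lemma \ref{lem:PeriodicPseudoDiffReg} then produces $g|_{\partial\D}\in C^{k+2,\alpha}(\T)$ with a controlled estimate, and standard analytic-extension arguments give the desired $g\in H_0^{k+2,\alpha}$. (c) The zeroth Fourier mode of the first equation now yields $\mu=\omega_0\hat g_0-\hat F_{1,0}$. (d) Once $g$ is known, $W_{0,a_0}[g]$ is a computable scalar, and Newton's equation determines $b$ provided $\Omega_0^2-\partial_{x_1}^2 U_0(a_0)\neq 0$; by radial symmetry $\partial_{x_1}^2 U_0(a_0)=U_0''(a_0)$, and combining \eqref{eq:RelationPotentialParticlePosition} with Lemma \ref{lem:PropertiesUnpertPot}(iii) one obtains $\Omega_0^2-U_0''(a_0)=U_0'(a_0)/a_0-U_0''(a_0)>0$, so $b$ is uniquely determined. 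This yields the explicit two-sided bounded inverse.

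The main obstacle is the proof of Lemma \ref{lem:LinearizationZero} itself: verifying the clean Fourier diagonalization and identifying each contribution to $\omega_n$. The kinetic term $\tfrac12|\nabla\phi_h|^2/|f_h'|^2$, linearized at $h=0$, produces the two summands $\phi_0'(1)\partial_r\bar\phi|_{r=1}$ and $-\phi_0'(1)^2\,\Re g'(e^{i\varphi})$; the radial-ODE analysis above turns the first into $\phi_0'(1)A'_{|n|}(1)(|n|+1)\hat g_{|n|}$, while Fourier-expanding the second gives $-\tfrac12\phi_0'(1)^2(|n|+1)\hat g_{|n|}$ per mode (the factor $n+1$ coming from differentiating $z^{n+1}$). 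The centrifugal term $-\tfrac12\Omega_0^2|f_h|^2$ yields $-\tfrac12\Omega_0^2\hat g_{|n|}$ uniformly in $n$, and the interaction potential contributes the coefficients $c_{|n|}$ upon testing the kernels in Propositions \ref{pro:InterPotFrechetDeriv} (respectively \ref{pro:InterPotFrechetDerivB}) against $z^{n+1}$; these integrals reduce exactly to \eqref{eq:GravPotLinCoeffA} in Case (A) and \eqref{eq:GravPotLinCoeffB} in Case (B). A secondary technical point, addressed by Lemma \ref{lem:PeriodicPseudoDiffReg}, is verifying that the order-$(-1)$ Fourier multiplier $1/\omega_n$ maps $C^{k+1,\alpha}(\T)$ boundedly into $C^{k+2,\alpha}(\T)$.
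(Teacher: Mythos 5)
Your proposal is correct and follows essentially the same route as the paper: diagonalize in Fourier modes using Lemma \ref{lem:LinearizationZero}, invert the multipliers $\omega_n$ under the non-resonance condition and the asymptotics from Lemmas \ref{lem:AsymptotcisCoeff} and \ref{lem:AsymptotcisCoeffGravPot}, apply Lemma \ref{lem:PeriodicPseudoDiffReg} for the order $-1$ gain, solve the mass and zeroth-mode equations for $\hat g_0$ and $\mu$, and determine $b$ from the scalar Newton equation using the sign information in Lemma \ref{lem:PropertiesUnpertPot}. The only step you leave implicit is the reconstruction of the analytic function $g\in H^{k+2,\alpha}_0$ from its boundary trace, which the paper carries out explicitly via the Szeg\H{o}-type projection $\tfrac12(I+\mathscr H)$ onto nonnegative Fourier modes before extending holomorphically into $\D$.
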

	For the purpose of proving Proposition \ref{pro:InvertLinOp} it is necessary to compute explicitly the form of the linear operator \eqref{eq:FrechetDerivativAtZero}. This is done in the following subsections, which also contain further needed auxiliary results.
	
	\subsubsection*{Linearization of the stream function.}\label{subsec:LinearizationStreamFunction}
	For the proof we write the operator $ \bar{\phi}[g]:=D_h\phi_h(0) [g] $, i.e. the Fr\'echet derivative of $ \phi_h $ at $ h=0 $, more explicitly. Due to Lemma \ref{lem:StreamFunctFrechetDeriv} it solves the equation
	\begin{align}\label{eq:LinEqStreamFunct}
		\Delta \bar{\phi} = G'(\phi_0)\bar{\phi} + 2\Re\left[ g' \right] G(\phi_0), \quad \bar{\phi}\mid_{\partial\D}=0.
	\end{align}
	Hence, using the expression \eqref{eq:PowerSeriesExp} we find that
	\begin{align}\label{eq:DefAuxilFunct}
		2\Re\left[ g'(re^{i\varphi}) \right] = 2\Re\left[ \sum_{n\geq0}(n+1)\hat{g}_nr^{n}e^{in\varphi} \right] = \sum_{n\in\Z}(|n|+1)\hat{\xi}_n[g] \, r^{|n|}e^{in\varphi},
	\end{align}
	where the coefficients $\hat{\xi}_{n}[g]$ are given by
	\begin{align}\label{coef:gn}
		\hat{\xi}_n[g]:=
		\begin{cases}
			\hat{g}_n & n\geq1,
			\\
			2\hat{g}_0 & n=0,
			\\
			\overline{\hat{g}_n} & n\leq-1.
		\end{cases}
	\end{align}
	Recall that $ g'(0)=\hat{g}_0\in \R $ since $ g\in H^{k+2,\alpha}_0 $. We use a Fourier decomposition to obtain the formula
	\begin{align*}
		\bar{\phi}[h](r,\varphi) = \sum_{n\in \Z} (|n|+1)\hat{\xi}_n[g] \,  A_n(r) e^{in \varphi},
	\end{align*}
	where $ A_n(r) $ solves the ordinary differential equation, see \eqref{eq:LinearizationStreamFunctODE},
	\begin{align}\label{ode:eq:linearizationstream2}
		\dfrac{1}{r}(rA_n')' -\dfrac{n^2}{r^2}A_n-G_1A_n = r^{|n|}G_0, \quad A_n(1)=0.
	\end{align}
	Above we used the shortcut notation $ G_0(r):=G(\phi_0(r))$, $ G_1(r):=G'(\phi_0(r)) $. Moreover, notice that $ A_n=A_{-n}$  by symmetry. 
	
	The function $ \bar{\phi} $ enters the linearization of $ \F $ in the following way
	\begin{align}\label{eq:ExplicitLinearizedStreamFunction}
		\left[\nabla\phi_0\cdot \nabla\bar{\phi}\right](e^{i\varphi}) = \phi_0'(1)\partial_r\bar{\phi}(1,\varphi) = \phi_0'(1)\sum_{n\in \Z} (|n|+1)\hat{\xi}_n[g] A_n'(1) e^{in \varphi}.
	\end{align}
	Recall that the unperturbed stream function $ \phi_0 $ is radial. Hereafter we provide a crucial result concerning the asymptotics of $ A_n'(1) $ as $ n\to \infty $.
	\begin{lem}\label{lem:AsymptotcisCoeff}
		Consider the solution $ \bar{\phi} $ of \eqref{eq:LinEqStreamFunct}. Then, the coefficients $ A_n $ have the asymptotics
		\begin{align}\label{asymp:formu}
			\lim_{n\to \infty} n A_n'(1) = \dfrac{G(\phi_0(1))}{2}.
		\end{align}
	\end{lem}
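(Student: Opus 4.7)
The plan is to reduce the claim to two explicit estimates via an integral identity for $A_n'(1)$ derived from the ODE \eqref{ode:eq:linearizationstream2}. Writing the ODE in self-adjoint form $(rA_n')' - \frac{n^2}{r}A_n - rG_1 A_n = r^{n+1}G_0$, I would multiply by $r^n$ and integrate over $(0,1)$. Two successive integrations by parts (using $A_n(1)=0$ and the Frobenius behavior $A_n(r) = O(r^{|n|})$ near the regular singular point $r=0$, so that every boundary contribution at $r=0$ vanishes) make the two terms of order $n^2$ cancel exactly, yielding the clean identity
\[
A_n'(1) \;=\; \int_0^1 r^{2n+1} G_0(r)\, dr \;+\; \int_0^1 r^{n+1} G_1(r) A_n(r)\, dr.
\]

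The first integral gives the leading order: one further integration by parts against the primitive $r^{2n+2}/(2n+2)$ yields $\frac{G_0(1)}{2n+2} - \frac{1}{2n+2}\int_0^1 r^{2n+2} G_0'(r)\, dr$, and since the remainder has size $O(1/n)$, multiplication by $n$ produces the desired limit $\frac{1}{2}G_0(1) = \frac{1}{2}G(\phi_0(1))$.

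For the second integral I would derive an a priori $L^2$ bound on $A_n$ via an energy estimate. Multiplying the self-adjoint form by $A_n$ and integrating yields
\[
\int_0^1 r(A_n')^2 dr + n^2 \int_0^1 \frac{A_n^2}{r} dr + \int_0^1 r G_1 A_n^2 dr \;=\; -\int_0^1 r^{n+1} G_0 A_n\, dr.
\]
Crucially $G_1 = G'(\phi_0) \geq 0$ by the monotonicity hypothesis on $G$ in Theorem \ref{thm:MainThm}, so all three terms on the left are non-negative. Hence the left-hand side controls $n^2 \|A_n\|^2_{L^2(r\,dr)}$, and Cauchy--Schwarz on the right (using $\int_0^1 r^{2n+1} dr = O(1/n)$) produces $\|A_n\|_{L^2(r\,dr)} \leq C n^{-5/2}$. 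A final Cauchy--Schwarz on the second integral in the identity above then yields $\left|\int_0^1 r^{n+1} G_1 A_n\, dr\right| = O(n^{-3}) = o(1/n)$, completing the argument. The main technical point to verify carefully is the vanishing of the boundary terms at $r=0$ during the two integrations by parts, which rests on the standard Frobenius analysis of \eqref{ode:eq:linearizationstream2}; the remaining steps are routine.
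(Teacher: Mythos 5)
Your proof is correct, and it takes a genuinely different route from the paper's. The paper decomposes $A_n = A_n^1 + A_n^2$: $A_n^1(r) = r^n\beta_n(r)$ is an explicit solution of the auxiliary ODE obtained by dropping $G_1$, and the remainder $A_n^2$ is controlled by solving the associated elliptic PDE on $\D$, using $W^{2,2}$ Sobolev estimates and the trace theorem to get $|(A_n^2)'(1)| \lesssim n^{-3/2}$. Your argument instead stays entirely within one-dimensional ODE theory: you multiply the self-adjoint form of \eqref{ode:eq:linearizationstream2} by the test function $r^n$ and integrate, letting the $n^2$ terms cancel through two integrations by parts, to reach the clean identity $A_n'(1) = \int_0^1 r^{2n+1}G_0\,dr + \int_0^1 r^{n+1}G_1 A_n\,dr$; the remainder is then dispatched by an energy estimate, which uses $G_1 \geq 0$ (i.e.\ the monotonicity of $G$) to deduce $n^2\int_0^1 A_n^2/r\,dr \leq C n^{-1/2}\left(\int_0^1 A_n^2/r\,dr\right)^{1/2}$ and hence a weighted $L^2$ bound of size $n^{-5/2}$, giving a remainder of order $n^{-3}$. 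Note that both proofs extract the \emph{same} leading quantity $\int_0^1 r^{2n+1}G_0\,dr$ and evaluate its limit in the same way (the paper's $B_n = \beta_n'(1)$ is exactly this integral), so the two methods differ only in how they isolate this term and in how they bound what remains. Your route is more elementary and self-contained, avoids elliptic regularity and the trace theorem, and incidentally yields a sharper remainder bound; the one thing you should make explicit (you correctly flag it) is that the smoothness of $\widetilde{\phi}_n = A_n e^{in\varphi}$ at the origin forces $A_n(r) = O(r^n)$ and $rA_n'(r) = O(r^n)$ there, so all boundary contributions at $r=0$ vanish, and $G_1 = G'(\phi_0)$ is bounded because $\phi_0$ takes values in a compact interval on which $G \in C^{k+3}$.
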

	\begin{rem}
		Compare \eqref{asymp:formu} with the explicit solutions for $ G\equiv -2\Omega_0 $ in Remark \ref{rem:NonResonanceCondition} (iii).
	\end{rem}
	\begin{proof}[Proof of Lemma \ref{lem:AsymptotcisCoeff}]
		Let $ n\geq1 $ throughout the proof. Writing $ \widetilde{\phi}_n(r,\varphi)=A_n(r)e^{in\varphi} $ we have
		\begin{align} 
			(\Delta-G_1)\widetilde{\phi}_n = r^ne^{in\varphi}G_0, \quad \widetilde{\phi}_n(1,\varphi)=0.
		\end{align}
		Since $ G_1= G'\circ \phi_0\geq  \textcolor{magenta}{0}$, the operator $ \Delta-G_1 $ with zero boundary conditions is invertible. Furthermore, since $ G_0\in C^{k+3,\alpha} $ we have $ \widetilde{\phi}_n\in C^{k+5,\alpha}(\overline{\D}) $.
		
		Let us look at the following auxiliary ODE
		\begin{align*}
			\dfrac{1}{r}(rb_n')' -\dfrac{n^2}{r^2}b_n = r^{n}G_0, \quad b_n(1)=0.
		\end{align*}
		Note that comparing it with the ODE solved by $ A_n $ given in \eqref{ode:eq:linearizationstream2}, only the term $ G_1 $ is removed, which is expected to be of lower order for $ n\to \infty $. It is convenient to write $ b_n(r)=r^n\beta_n(r) $ with
		\begin{align*}
			\dfrac{1}{r}(r\beta_n')' +\dfrac{2n}{r}\beta_n' = G_0, \quad \beta_n(1)=0.
		\end{align*}
		We can find the solution explicitly up to a parameter
		\begin{align*}
			\beta_n(r)= -\dfrac{\beta'_n(1)}{2n} \dfrac{1-r^{2n}}{r^{2n}}-\dfrac{1}{2n}\int_r^1G_0(s)s \, ds + \dfrac{1}{2n}\dfrac{1}{r^{2n}}\int_r^1 G_0(s) s^{2n+1}\, ds.
		\end{align*} 
		In order that $ \beta_n $ exists for $ r\to 0 $ we choose 
		\begin{align}\label{eq:ProofAsymptoticsDeriv}
			B_n:=\beta'_n(1) :=\int_0^1 G_0(s) s^{2n+1} \, ds
		\end{align}
		yielding
		\begin{align*}
			\beta_n(r)= \dfrac{B_n}{2n} -\dfrac{1}{2n}\int_r^1G_0(s)s \, ds - \dfrac{1}{2n}\dfrac{1}{r^{2n}}\int_0^r G_0(s) s^{2n+1} \, ds.
		\end{align*}
		Observe that $ |\beta_n(r)|\leq C/n $ for some constant $ C>0 $ independent of $ r $ and $ n $.
		
		Let us now decompose
		\begin{align*}
			\widetilde{\phi}_n(r,\varphi) = \widetilde{\phi}_n^1(r,\varphi)+\widetilde{\phi}_n^2(r,\varphi), \quad \widetilde{\phi}_n^1(r,\varphi) := r^n\beta_n(r) e^{in\varphi}.
		\end{align*}
		Accordingly, we get the decomposition
		\begin{align}\label{eq:ProofAsymptoticsDecomp}
			A_n(r) = A_n^1(r)+A_n^2(r), \quad A_n^1(r)=r^n\beta_n(r).
		\end{align}
		Hence, with the above calculations we have
		\begin{align*}
			(\Delta-G_1)\widetilde{\phi}_n^2 = (\Delta-G_1)(\widetilde{\phi}_n-\widetilde{\phi}_n^1) = G_1\widetilde{\phi}_n^1, \quad \widetilde{\phi}_n^2\mid_{\partial\D} =0.
		\end{align*}
		We can apply regularity estimates in Sobolev spaces to obtain
		\begin{align*}
			\norm[W^{2,2}(\D)]{\widetilde{\phi}_n^2}\leq C\norm[L^2(\D)]{g\widetilde{\phi}_n^1} \leq \dfrac{C}{n} \left( \int_0^1s^{2n+1} \, ds \right)^{1/2} \leq \dfrac{C}{n^{3/2}}.
		\end{align*}
		Here, we used $ |\widetilde{\phi}_n^1(r,\varphi)|\leq |\beta_n(r)|\leq C/n $. Applying the trace theorem (cf. \cite{GilbargTrudinger2001}) gives
		\begin{align*}
			\norm[L^2(\partial\D)]{\partial_r\widetilde{\phi}_n^2(1,\cdot)}\leq \dfrac{C}{n^{3/2}},
		\end{align*}
		and hence $ |(A_n^2)'(1)|\leq C/n^{3/2} $. In conclusion, combining \eqref{eq:ProofAsymptoticsDecomp} and \eqref{eq:ProofAsymptoticsDeriv} we find that
		\begin{align*}
			\lim_{n\to \infty} n\, A_n'(1) =& \lim_{n\to \infty} n\, (A_n^1)'(1) = \lim_{n\to \infty} n \, \beta_n'(1) = \lim_{n\to \infty} n \, B_n \\
			=& \lim_{n\to \infty} n\int_0^1G_0(s) s^{2n+1}\, ds 
			\\
			=& \lim_{n\to \infty} n\left( \dfrac{G_0(1)}{2n+2} - \dfrac{1}{2n+2}\int_0^1G_0'(s) s^{2n+2}\, ds \right) 
			\\
			=& \dfrac{G_0(1)}{2} = \dfrac{G(\phi_0(1))}{2},
		\end{align*}
		showing the desired result.
	\end{proof}
	
	\subsubsection*{Linearization of the interaction potential, Case \eqref{CaseA}.}\label{subsec:LinearizationPotCaseA}
	Due to Proposition \ref{pro:InterPotFrechetDeriv} we have for $ x=e^{i\varphi} $ (we use here the change of variables $ e^{i\varphi}y\mapsto y $)
	\begin{align*}
		D_h(U_h\circ f_h)\mid_{h=0}[g](x)= -\int_{\D} \dfrac{2\Re[g'(y)]}{|x-y|^\nu}\, dy + \nu\int_{\D} \dfrac{\Re[(\overline{x-y})(g(x)-g(y))]}{|x-y|^{2+\nu}}\, dy.
	\end{align*}
	As before we use the power series expansion for $ g $ given in \eqref{eq:PowerSeriesExp}. We have
	\begin{align*}
		D_h(U_h\circ f_h)\mid_{h=0}[g](x)= \sum_{n=0}^{\infty}\Re \left[ \hat{g}_{n}\int_{\D} \left( \nu\dfrac{x^{n+1}-y^{n+1}}{x-y}-2(n+1)y^{n} \right) \dfrac{dy}{|x-y|^\nu} \right] .
	\end{align*}
	Since $ x=e^{i\varphi} $ we can use a rotation to obtain
	\begin{align*}
		\sum_{n=0}^{\infty}\Re \left[ \hat{g}_{n}e^{in\varphi}\int_{\D} \left( \nu \dfrac{1-y^{n+1}}{1-y}-2(n+1)y^{n} \right) \dfrac{dy}{|1-y|^\nu} \right].
	\end{align*}
	Recalling the definition of $ c_n $ in \eqref{eq:GravPotLinCoeffA}, the fact that $c_{n}$ are real and the definition of $ \hat{\xi}_n[g] $ in \eqref{eq:DefAuxilFunct}, we obtain
	\begin{align}\label{eq:ExplicitLinearizedGravPot}
		D_h(U_h\circ f_h)\mid_{h=0}[g](e^{i\varphi}) = \sum_{n\in \Z} c_n \, \hat{\xi}_n[g] \, e^{in\varphi},
	\end{align}
	where we define $ c_{n}=c_{|n|} $ for $ n<0 $. The following result will be useful.
	\begin{lem}\label{lem:AsymptotcisCoeffGravPot}
		The sequence $ c_n $ defined in \eqref{eq:GravPotLinCoeffA} with $ \nu=1 $ satisfies
		\begin{align}\label{eq:AsymptoticsCoeffA}
			\lim_{n\to \infty} \dfrac{c_n}{\ln n}=\gamma_0
			\quad \text{with} \quad 
			\gamma_0:= \int_0^\infty\int_0^\infty \dfrac{e^{-r}\zeta \sin(\zeta)}{(r^2+\zeta^2)^{3/2}}\, drd\zeta.
		\end{align}
		For $ \nu\in (0,1) $ we have $ \sup_{n\geq0} |c_n|<\infty $.
	\end{lem}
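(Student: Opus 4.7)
The plan is to parametrise $y \in \D$ near the singular point $y = 1$ by $y = 1 - s e^{i\beta}$ with $\beta \in (-\pi/2, \pi/2)$ and $s \in (0, 2\cos\beta)$, for which $|1-y| = s$ and $dy = s\, ds\, d\beta$, and then to rescale $s = u/n$. The integration domain in $u$ becomes $(0, 2n\cos\beta)$, which fills the right half-plane as $n \to \infty$, while the pointwise limits
\begin{align*}
y^n = \bigl(1 - \tfrac{u\,e^{i\beta}}{n}\bigr)^n \to e^{-u\,e^{i\beta}}, \qquad \frac{1-y^{n+1}}{1-y} \sim \frac{n\bigl(1 - e^{-u\,e^{i\beta}}\bigr)}{u\,e^{i\beta}}
\end{align*}
together with $|1-y|^{-\nu} = (n/u)^{\nu}$ yield, after some bookkeeping,
\begin{align*}
c_n = \frac{n^{\nu-1}}{2}\int_{-\pi/2}^{\pi/2}\!\int_{0}^{2n\cos\beta} u^{1-\nu}\, \Re\!\left[\nu\,\frac{1 - e^{-u\,e^{i\beta}}}{u\,e^{i\beta}} - 2\,e^{-u\,e^{i\beta}}\right] du\,d\beta + \mathcal{R}_n,
\end{align*}
for a remainder $\mathcal{R}_n$ to be controlled.

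For $\nu \in (0,1)$ the integrand is asymptotic to $\nu\cos\beta\cdot u^{-\nu}$ at large $u$, so its $u$-integral grows like $n^{1-\nu}$, exactly balancing the prefactor $n^{\nu-1}$ and giving $\sup_n |c_n| < \infty$. For $\nu = 1$ the prefactor is $1$ and the tail $\cos\beta/u$ now produces a logarithmic divergence: isolating the tail against an integrable remainder (after a cut-off at some fixed $u_0 > 0$) and integrating in $\beta$ yields $c_n = \gamma_0 \ln n + O(1)$ with $\gamma_0 = \tfrac{1}{2}\int_{-\pi/2}^{\pi/2}\cos\beta\,d\beta = 1$. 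To recast $\gamma_0$ into the stated form I would pass to Cartesian coordinates $w = u\,e^{i\beta} = r + i\zeta$, in which $du\,d\beta = dr\,d\zeta/\sqrt{r^2+\zeta^2}$; expanding $\Re[(1-e^{-w})/w - 2e^{-w}]$ into $r/(r^2+\zeta^2)$, $e^{-r}(r\cos\zeta - \zeta\sin\zeta)/(r^2+\zeta^2)$ and $e^{-r}\cos\zeta$, using the evenness of the integrand in $\zeta$ to restrict to the first quadrant, and invoking $\int_0^\infty e^{-a\rho}\sin(b\rho)\,\rho^{-1}d\rho = \arctan(b/a)$ on the surviving $e^{-r}\zeta\sin\zeta/(r^2+\zeta^2)^{3/2}$ piece verifies the stated formula; as a consistency check, both expressions evaluate to $1$.

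The main obstacle is the uniform control of $\mathcal{R}_n$ over the entire $y$-disc. Two delicate sub-regions appear: (i) $u$ comparable to $n$, where the expansion $(1 - u\,e^{i\beta}/n)^n \approx e^{-u\,e^{i\beta}}$ ceases to be sharp, and (ii) $\beta \to \pm\pi/2$, where the exponential decay rate $\cos\beta$ degenerates. I would split $\D$ into a ``near-$y=1$'' region $|1-y| \leq \delta$, handled by the rescaling above; a ``bulk'' region $|y| \leq 1 - \delta'$ where $|y^n|$ decays exponentially and the crude bound $|(1 - y^{n+1})/(1-y)| \leq 2/|1-y|$ suffices; and a thin boundary layer along $\partial\D$ away from $y = 1$, controlled by a polar expansion exploiting $|y|^n = (1 - (1 - |y|))^n$.
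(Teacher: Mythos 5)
Your approach is correct in substance and genuinely different in structure from the paper's proof. The paper first rewrites $c_n = \nu\sum_{k=0}^n\widetilde{c}_k - 2(n+1)\widetilde{c}_n$ with $\widetilde{c}_k = \tfrac12\int_\D y^k|1-y|^{-\nu}\,dy$, then proves the single asymptotic $k^{2-\nu}\widetilde{c}_k\to\gamma_0^\nu$ by a chain of substitutions in polar coordinates ($r=1-s$, $ks\mapsto r$, $k\varphi\mapsto\psi$, $\zeta=2k\sin(\psi/2k)$), an integration by parts in $\zeta$, and dominated convergence; the $\ln n$ then emerges only at the last step from the harmonic sum $\sum_{k\leq n}1/k$. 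You instead keep $c_n$ intact, parametrize the disk by polar coordinates $(s,\beta)$ centered at the singular point $y=1$, rescale $s=u/n$, and read the logarithm off directly from the $u^{-1}$ tail of the rescaled integrand. Both routes hinge on the same $1/n$-scale blow-up near $y=1$ and end up in essentially the same limiting Cartesian coordinates $w=r+i\zeta$; the paper's factorization localizes the delicate dominated-convergence argument to the simpler function $\widetilde{c}_k$, while your one-shot version is more transparent about where the $\ln n$ actually comes from and gives the bonus consistency check $\gamma_0 = \tfrac12\int_{-\pi/2}^{\pi/2}\cos\beta\,d\beta = 1$ (which does match the paper's double integral: in polar coordinates $(r,\zeta)=\rho(\cos\theta,\sin\theta)$ and $\int_0^\infty e^{-\rho\cos\theta}\sin(\rho\sin\theta)\,d\rho/\rho=\theta$, one gets $\int_0^{\pi/2}\theta\sin\theta\,d\theta=1$). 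The one place you would have to work is controlling $\mathcal{R}_n$ uniformly in the corner $u\sim n$, $\beta\to\pm\pi/2$, where $|1-ue^{i\beta}/n|$ is close to $1$ and the exponential approximation fails; your proposed three-region split (near $y=1$, bulk, and a boundary layer) is the right strategy and is morally the same estimate the paper buries inside its $r<1/2$ versus $r>1/2$ splitting and its dominating function, so this is a sketch of a complete argument rather than a gap.
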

	\begin{proof}
		First of all, we have
		\begin{align*}
			c_n=\sum_{k = 0}^{n}\dfrac{\nu}{2}\int_{\D}\dfrac{y^k}{|1-y|^\nu}\, dy-(n+1) \int_{\D}\dfrac{y^{n}}{|1-y|^\nu}\, dy.
		\end{align*}
		Let us define
		\begin{align*}
			\widetilde{c}_k:=\dfrac{1}{2}\int_{\D} \dfrac{y^k}{|1-y|^\nu}\, dy.
		\end{align*}
	
		We show below that 
		\begin{align}\label{eq:ProofAsymptoticsGrav}
			\lim_{k\to \infty} k^{2-\nu} \, \widetilde{c}_k = \gamma_0^\nu, \quad \gamma_0^\nu := \nu\int_0^\infty\int_0^\infty \dfrac{e^{-r}\zeta \sin(\zeta)}{(r^2+\zeta^2)^{(2+\nu)/2}}\, drd\zeta.
		\end{align}
		Note that $ \gamma_0^1=\gamma_0 $ for $ \nu=1 $. With this we infer for $ \nu=1 $
		\begin{align*}
			\lim_{n\to \infty} \dfrac{c_n}{H_n}=\gamma_0, \quad H_n= \sum_{k = 1}^{n} \dfrac{1}{k}.
		\end{align*}
		Since $ H_n= \ln(n) \, (1+o(1)) $ as $ n\to \infty $, this implies the asymptotics \eqref{eq:AsymptoticsCoeffA} for $ \nu=1 $. The claim for $ \nu\in (0,1) $ is also a consequence of the above asymptotics. 
		
		We now prove \eqref{eq:ProofAsymptoticsGrav}. The term $ \widetilde{c}_k $ is real-valued so that
		\begin{align}\label{eq:ProofAsymptoticsGravSplitting}
			\widetilde{c}_k = \dfrac{1}{2}\int_0^1\int_0^{2\pi}\dfrac{r^{k+1}\cos(k\varphi)}{(1+r^2-2r\cos\varphi)^{\nu/2}}\, d\varphi dr = I_k^1+I_k^2.
		\end{align}
		The two terms $ I_k^1 $ and $ I_k^2 $ are defined by splitting the integral \eqref{eq:ProofAsymptoticsGravSplitting} with respect to $ r $ into the regions $ (0,1/2) $ and $ (1/2,1) $. We can readily check that 
		\begin{align}\label{eq:ProofAsymptoticsGravII}
			k^{2-\nu} |I_k^1| \leq \dfrac{Ck^{2-\nu}}{2^{k+1}}\to 0.
		\end{align}
		To deal with the $ I_{k}^2 $ we notice that with the change of variables $ r=1-s $
		\begin{align*}
			k^{2-\nu} \, I_k^2 &= \dfrac{k^{2-\nu}}{2} \int_0^{1/2}\int_0^{2\pi} \dfrac{(1-s)^{k+1}\cos(k\varphi)}{\left(s^2+4(1-s)\sin^2(\varphi/2) \right)^{\nu/2}}\, d\varphi ds
			\\
			&= \dfrac{k^{1-\nu}}{2} \int_0^{k/2}\int_0^{2\pi} \dfrac{\left( 1-\dfrac{r}{k} \right)^{k+1}\cos(k\varphi)}{ \left[  \left(\dfrac{r}{k} \right)^2+4\left( 1-\dfrac{r}{k} \right)\sin^2\left(\dfrac{\varphi}{2}\right) \right]^{\nu/2} }\, d\varphi dr.
		\end{align*}
		In the second equality we used the change of variables $ ks=r $. Furthermore, writing $ k\varphi=\psi $ we get
		\begin{align*}
			k^{2-\nu} \, I_k^2 &= \dfrac{1}{2} \int_0^{k/2}\int_0^{2k\pi} \dfrac{\left( 1-\dfrac{r}{k} \right)^{k+1}\cos(\psi)}{ \left[  r^2+4\left( 1-\dfrac{r}{k} \right)k^2\sin^2\left(\dfrac{\psi}{2k}\right) \right]^{\nu/2}  }\, d\psi dr
			\\
			&=\int_0^{k/2}\int_0^{k\pi} \dfrac{\left( 1-\dfrac{r}{k} \right)^{k+1}\cos(\psi)}{ \left[  r^2+4\left( 1-\dfrac{r}{k} \right)k^2\sin^2\left(\dfrac{\psi}{2k}\right) \right]^{\nu/2} }\, d\psi dr,
		\end{align*}
		where we used the symmetry in the last equality. Let us now define the function $ \zeta_k:(0,k\pi)\to (0,2k) $ by
		\begin{align*}
			\zeta_k(\psi)=2k\sin\left(\dfrac{\psi}{2k}\right),
		\end{align*}
		which is one-to-one and onto. Furthermore, by a Taylor expansion one can see that $ \zeta_k(\psi)\to \psi $ for any $ \psi\in (0,k\pi) $ as $ k\to \infty $. Consequently, we have for the inverse function $ \psi_k(\zeta)\to \zeta $ as $ k\to \infty $. We obtain by the change of variables $ \psi\mapsto \zeta $
		\begin{align*}
			k^{2-\nu} \, I_k^2 = \int_0^{k/2}\int_0^{2k} \dfrac{\left( 1-\dfrac{r}{k} \right)^{k+1}}{ \left[ r^2+\left( 1-\dfrac{r}{k} \right)\zeta^2 \right]^{\nu/2}  } \cos(\psi_k(\zeta))\psi_k'(\zeta)d\zeta dr.
		\end{align*}
		We now use an integration by parts in $ \zeta $ to obtain (note that the boundary terms vanish, due to $ \psi_k(0)=0,\psi_k(2k)=k\pi $)
		\begin{align*}
			k^{2-\nu} \, I_k^2 = \nu\int_0^{k/2}\int_0^{2k} \dfrac{\left( 1-\dfrac{r}{k} \right)^{k+2}\zeta\sin(\psi_k(\zeta))}{\left[ r^2+\left( 1-\dfrac{r}{k} \right)\zeta^2 \right]^{(\nu+2)/2} }d\zeta dr.
		\end{align*}
		The integrand converges pointwise to 
		\begin{align*}
			\dfrac{e^{-r}\zeta \sin(\zeta)}{(r^2+\zeta^2)^{(\nu+2)/2}}
		\end{align*}
		as $ k\to \infty $. Since 
		\begin{align*}
			\left( 1-\dfrac{r}{k} \right)^{k+2} &= \exp\left( (k+2)\ln\left(1-\dfrac{r}{k}\right) \right) \leq e^{-r},
			\\
			\psi_k(\zeta) &= 2k\arcsin\left(  \dfrac{\zeta}{2k}\right) \leq C\zeta
		\end{align*}
		for say $ \zeta\in(0,1) $, a majorant is given by
		\begin{align*}
			\dfrac{e^{-r}\min(C\zeta^2,\zeta)}{(r^2+\zeta^2/2)^{(2+\nu)/2}}.
		\end{align*}
		Hence, we get $ k^{2-\nu} I_k^2\to \gamma_0^\nu $. Combining this with \eqref{eq:ProofAsymptoticsGravII} and \eqref{eq:ProofAsymptoticsGravSplitting} yields \eqref{eq:ProofAsymptoticsGrav}.
	\end{proof}
	
	\subsubsection*{Linearization of the interaction potential, Case \eqref{CaseB}.}\label{subsec:LinearizationPotCaseB}
	By Proposition \ref{pro:InterPotFrechetDerivB} we have for $ x=e^{i\varphi} $
	\begin{align*}
		D_h(U_h\circ f_h)\mid_{h=0}[g](x)= 2\int_{\D} \ln|x-y| \, \Re[g'(y)]\, dy+ \int_{\D} \dfrac{\Re[(\overline{x-y})(g(x)-g(y))]}{|x-y|^2}\, dy.
	\end{align*} 
	Again, we use the power series expansion for $ g $, cf. \eqref{eq:PowerSeriesExp}, yielding
	\begin{align*}
		D_h(U_h\circ f_h)\mid_{h=0}[g]\left( e^{i\varphi} \right) = \sum_{n=0}^{\infty}2\Re \left[ \hat{g}_{n}\int_{\D} \left( (n+1)y^n\ln|x-y|+\dfrac{1}{2}\dfrac{x^{n+1}-y^{n+1}}{x-y} \right) \, dy \right] .
	\end{align*}
	For $ x=e^{i\varphi} $ and applying the change of variables $ y\mapsto e^{i\varphi}y $ gives
	\begin{align*}
		\sum_{n=0}^{\infty}2\Re \left[ \hat{g}_{n} e^{in\varphi}\int_{\D} \left( (n+1)y^n\ln|1-y|+\dfrac{1}{2}\dfrac{1-y^{n+1}}{1-y} \right) \, dy \right] .
	\end{align*}
	As we will see below we have, see \eqref{eq:GravPotLinCoeffB},
	\begin{align}\label{eq:ComputationGravPotLinCoeffB}
		c_n=\int_{\D} \left( (n+1)y^n\ln|1-y|+\dfrac{1}{2}\dfrac{1-y^{n+1}}{1-y} \right) \, dy = \begin{cases}
			\frac{\pi}{2}\left(1-\frac{1}{n}\right) & n\geq 1,
			\\
			\frac{\pi}{2} & n=0.
		\end{cases}
	\end{align}
	Recalling the definition of $ \hat{\xi}_n[g] $ in \eqref{eq:DefAuxilFunct}, we have (see \eqref{eq:ExplicitLinearizedGravPot})
	\begin{align*}
		D_h(U_h\circ f_h)\mid_{h=0}[g](e^{i\varphi}) = \sum_{n\in \Z} c_n\, \hat{\xi}_n[g] \, e^{in\varphi},
	\end{align*}
	where we again define $ c_{n}=c_{|n|} $ for $ n<0 $.
	
	Let us now prove \eqref{eq:ComputationGravPotLinCoeffB}. For $ n=0 $ the integral reduces to $ U_0(1)+\pi/2=\pi/2 $, cf. Lemma \ref{lem:UnperturbedPotential}. For the other cases let us first observe that
	\begin{align}\label{eq:ProofComputationGravPotLinCoeffB}
		\dfrac{1}{2}\int_{\D}\dfrac{1-y^{n+1}}{1-y} \, dy = \dfrac{1}{2} \sum_{k = 0}^n \int_{\D} y^k\, dy = \dfrac{1}{2} \sum_{k = 0}^n \int_0^{2\pi}\int_0^1 r^ke^{ik\varphi}\, rdrd\varphi = \dfrac{\pi}{2}. 
	\end{align}
	Moreover, we can also write
	\begin{align}\label{eq:ProofComputationGravPotLinCoeffB2}
		(n+1)\int_{\D} y^n\ln|1-y| \, dy = (n+1) \int_0^1\int_0^{2\pi}r^ne^{in\varphi}\ln|1-re^{i\varphi}|\, rdrd\varphi.
	\end{align}
	Hence using the following expansion
	\begin{align*}
		\ln|1-re^{i\varphi}| &= \dfrac{1}{2}\left( \ln(1-re^{i\varphi})+\ln(1-re^{-i\varphi})  \right) 
		\\
		&= -\dfrac{1}{2}\left( \sum_{k = 0}^\infty \dfrac{r^{k+1}e^{i(k+1)\varphi}}{k+1}+\sum_{k = 0}^\infty \dfrac{r^{k+1}e^{-i(k+1)\varphi}}{k+1} \right).
	\end{align*}
	and plugging it in \eqref{eq:ProofComputationGravPotLinCoeffB2} we find that
	\begin{align*}
		(n+1)\int_{\D} y^n\ln|1-y| \, dy = -2\pi\dfrac{n+1}{2n}\int_0^1 r^{2n+1}\, dr = -\dfrac{\pi}{2n}.
	\end{align*}	
	Thus, combining \eqref{eq:ProofComputationGravPotLinCoeffB} and \eqref{eq:ProofComputationGravPotLinCoeffB2} we infer \eqref{eq:ComputationGravPotLinCoeffB}.
	
	\begin{rem}
		Let us note that in both Case \eqref{CaseA} and Case \eqref{CaseB} we have $ c_1=0 $. This holds in general since $ (U_{h_\varepsilon}\circ f_{h_\varepsilon})(1)=U_0(1) $, where $ h_\varepsilon(z) = \varepsilon z^2 +\mathcal{O}(\varepsilon^2) $ is associated to translations, see formula \eqref{eq:ConformalMapTranslation} in Remark \ref{rem:ConformalMapPowerSeries}. We hence obtain
		\begin{align*}
			c_1 = D_h(U_h\circ f_h)\mid_{h=0}[z^2](1) =  \dfrac{d}{d\varepsilon} \mid_{\varepsilon=0} (U_{h_\varepsilon}\circ f_{h_\varepsilon})(1) = 0.
		\end{align*}
		As mentioned in Remark \ref{rem:ConformalMapPowerSeries} the effects of conformal mappings due to translations appear to first order in the Fourier modes $ n=\pm 1 $ and thus in the coefficient $ c_1 $. 
	\end{rem}
	
	\subsubsection*{Linearization of the full problem.}\label{subsec:LinearizationFullProb}
	We summarize the full linearized operator at $ (h_0\equiv0,a_0,\lambda_0,m=0) $ in the following lemma.
	\begin{lem}\label{lem:LinearizationZero}
		The operator $ D_{(h,a,\lambda)}\F(0,a_0,\lambda_0,0) $ has the form
		\begin{align*}
			(g,b,\mu)&\mapsto \left( \begin{matrix}
				\Lin g -\mu
				\\
				\Omega_0^2b-\partial_{x_1}^2U_0(a_0,0)b - W_{0,a_0}[g]
				\\
				\pi\hat{h}_0
			\end{matrix}\right) ,
			\\ 
			\Lin g(\varphi) &:= 2\omega_0\hat{g}_0 + \sum_{n\geq1} \omega_n\, \hat{g}_n e^{in\varphi}+\sum_{n\leq -1} \omega_{n}\, \overline{\hat{g}_{|n|}} e^{in\varphi},
			\\
			\omega_n&=-\dfrac{1}{2}\phi_0'(1)^2(|n|+1)+\phi_0'(1)A_n'(1) (|n|+1)-\dfrac{1}{2}\Omega_0^2+c_{|n|},
		\end{align*}
		Here, $ W_{0,a_0}[g] $ is defined in Lemma \ref{lem:LinearizationSecondComponent3D} in both Case \eqref{CaseA} and Case \eqref{CaseB}.
	\end{lem}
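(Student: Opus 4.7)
The proof consists in computing the Fr\'echet derivatives of the three components of $\F$ defined in \eqref{eq:FullFunction} at $(0, a_0, \lambda_0, 0)$ in the direction $(g, b, \mu)$. The second and third components are essentially immediate from the auxiliary results of Section~\ref{linear:sec:fullproblem}. Indeed, from Lemma \ref{lem:LinearizationSecondComponent3D} the second component yields $\Omega_0^2 b - \partial_{x_1}^2 U_0(a_0, 0)\, b - W_{0, a_0}[g]$; from Lemma \ref{lem:LinearizationMass} the third equals $2\Re\int_\D g'(y)\, dy$, and inserting the power series $g'(y) = \sum_{n\geq 0}(n+1)\hat g_n y^n$ together with $\int_\D y^n\, dy = \pi\,\delta_{n, 0}$ produces the claimed constant multiple of $\hat g_0$. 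In both cases the term $m\, U_X\circ f_h$ contributes nothing to the derivative in $(h, a, \lambda)$, since its derivatives in those variables carry the prefactor $m = 0$.

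The bulk of the work is the boundary equation, i.e.\ the first component, which I would split into four pieces. For the kinetic term $\tfrac{1}{2}|\nabla \phi_h|^2/|f_h'|^2$, the chain rule combined with the identity $D_h(|f_h'|^{-2})\mid_{h=0}[g] = -2\Re[g']$ gives on $\partial \D$ the expression $\nabla \phi_0\cdot\nabla\bar\phi - |\nabla\phi_0|^2\, \Re[g'(e^{i\varphi})]$, where $\bar\phi = D_h\phi_h(0)[g]$ solves \eqref{eq:LinEqStreamFunct}. Using the radial symmetry of $\phi_0$, so that $|\nabla\phi_0|^2 = \phi_0'(1)^2$ on $\partial\D$, together with \eqref{eq:ExplicitLinearizedStreamFunction} for $\nabla\phi_0\cdot\nabla\bar\phi$ and the expansion \eqref{eq:DefAuxilFunct} for $\Re[g']$, this piece becomes
\begin{align*}
\sum_{n\in\Z}\Bigl[\phi_0'(1)\, A_n'(1)(|n|+1) - \tfrac{1}{2}\phi_0'(1)^2(|n|+1)\Bigr] \hat\xi_n[g]\, e^{in\varphi}.
\end{align*}
For the centrifugal term $-\tfrac{\Omega_0^2}{2}|f_h|^2$ on $z = e^{i\varphi}$, a direct differentiation gives $-\Omega_0^2\Re[e^{-i\varphi} g(e^{i\varphi})]$; inserting the power series for $g$ and collecting modes using \eqref{coef:gn} produces $-\tfrac{\Omega_0^2}{2}\sum_{n\in\Z} \hat\xi_n[g]\, e^{in\varphi}$. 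The interaction potential $U_h\circ f_h$ is handled via Proposition \ref{pro:InterPotFrechetDeriv} (resp.\ Proposition \ref{pro:InterPotFrechetDerivB}) together with the explicit Fourier computation in \eqref{eq:ExplicitLinearizedGravPot}, yielding $\sum_{n\in\Z} c_{|n|}\, \hat\xi_n[g]\, e^{in\varphi}$. The $-\lambda$ term simply gives $-\mu$. Summing the four contributions and grouping mode-by-mode reproduces $\Lin g - \mu$ with the announced multipliers $\omega_n$.

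The only obstacle worth flagging is the bookkeeping of the Fourier expansions: the direction $g$ enters under three different guises -- as the power-series coefficients $\hat g_n$, as the boundary-trace coefficients $\hat\xi_n[g]$ from \eqref{coef:gn}, and, implicitly through $\bar\phi$, via the radial profiles $A_n$ from \eqref{ode:eq:linearizationstream2}. The point that makes $\Lin$ diagonal in Fourier space is that, after simplification, each of the four contributions to the boundary equation is a series in the same basis $\{\hat\xi_n[g]\, e^{in\varphi}\}_{n\in\Z}$, so that matching Fourier modes directly produces the stated formula for $\omega_n$. Once this alignment is in place, the identification of the three components with the expression in the statement is purely mechanical.
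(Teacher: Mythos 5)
Your proposal follows essentially the same route as the paper: the second and third components are read off directly from Lemmas \ref{lem:LinearizationSecondComponent3D} and \ref{lem:LinearizationMass}, and the first component is decomposed into the four contributions (kinetic, centrifugal, interaction potential, Lagrange multiplier), each of which is then expanded in Fourier modes of the form $\hat{\xi}_n[g]\,e^{in\varphi}$ via \eqref{eq:ExplicitLinearizedStreamFunction}, \eqref{eq:DefAuxilFunct} and \eqref{eq:ExplicitLinearizedGravPot}. The computations are correct, and the remark about $m=0$ killing the $U_X$ contribution is the right observation.

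One small point worth committing to rather than glossing over: carrying through your own computation for the third component via Lemma \ref{lem:LinearizationMass}, one gets $2\Re\int_{\D}\overline{g'}\,dx = 2\pi\hat{g}_0$, not $\pi\hat{g}_0$ as written in the statement of the lemma (where moreover $\hat{h}_0$ should read $\hat{g}_0$). Your phrasing ``produces the claimed constant multiple of $\hat g_0$'' sidesteps this discrepancy, but the factor of $2$ is genuinely there and the paper's stated constant appears to be a typo. Since the multiplier on $\hat{g}_0$ is nonzero in either case, invertibility of the linearized operator in Proposition \ref{pro:InvertLinOp} is unaffected, so this does not impact the subsequent argument; still, it is better to state the constant you actually derive and flag the mismatch.
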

	Note that in the last component of the linearized operator we identify again $ \R^2\backsimeq\C $. Furthermore, the coefficients $ \omega_n $ appeared already in \eqref{eq:FourierMultipliers}.
	\begin{proof}[Proof of Lemma \ref{lem:LinearizationZero}]
		The first component of $ \F $ in \eqref{eq:FullFunction} has the linearization at the point $ (h=0,X_0,\lambda_0,m=0) $
		\begin{align*}
			(g,\mu)\mapsto &\phi_0'(1) \, \partial_r\bar{\phi}(1,\varphi)- \phi_0'(1)^2 \Re[g'(e^{i\varphi})] 
			\\
			&- \Omega_0^2 \, \Re\left[ e^{-i\varphi}g(e^{i\varphi}) \right] +D_h(U_h\circ f_h)\mid_{h=0}[g](e^{i\varphi})- \mu.
		\end{align*}
		Using \eqref{eq:PowerSeriesExp} and \eqref{eq:DefAuxilFunct} we obtain
		\begin{align*}
			\Re\left[ e^{-i\varphi}g(e^{i\varphi}) \right] = \dfrac{1}{2} \sum_{n\in\Z} \hat{\xi}_n[g]\, e^{in\varphi}.
		\end{align*}
		Using both \eqref{eq:ExplicitLinearizedStreamFunction} and \eqref{eq:ExplicitLinearizedGravPot} yields the expression of the first component. Applying the definition of $ \hat{\xi}_n[g] $ in \eqref{eq:DefAuxilFunct} yields the form of the operator $ \Lin g $.
		
		The linearization of the second component $ \F $ in \eqref{eq:FullFunction} is a consequence of Lemma \ref{lem:LinearizationSecondComponent3D}. For the last component, note that the linearization of the mass constraint in Lemma \ref{lem:LinearizationMass} becomes $ g\mapsto \pi\, \Re[\hat{g}_0]= \pi \hat{g}_0 $, since $ g\in H_0^{k+2,\alpha} $. This concludes the proof.
	\end{proof}
	Before providing the proof of Proposition  \ref{pro:InvertLinOp} we need to show the following result on Fourier multipliers on the torus in H\"older spaces.
	\begin{lem}\label{lem:PeriodicPseudoDiffReg}
		Let $ k\in\N $ and $ \alpha\in(0,1) $. Consider a sequence $ \beta = (\beta_n)_n $ of the form $ \beta_n=\kappa/(|n|+b_n) $, $ \beta_0=0 $, $ n\in\Z $ with some real constant $ \kappa\neq0 $. Assume that $ b_n\neq -|n| $ is a sequence satisfying $ \sup_{n\neq0}|b_n||n|^{-\gamma}\leq C $ for some $ 0\leq \gamma\leq1/2 $. Then, the periodic pseudodifferential operator
		\begin{align*}
			\textsf{OP}(\beta)\xi (\varphi) = \sum_{n\in \Z\backslash\{0\}} \beta_n \hat{\xi}_n\, e^{in\varphi}
		\end{align*}
		defines a bounded map $ C_0^{k,\alpha}(\T)\to C_0^{k+1,\alpha}(\T) $.
	\end{lem}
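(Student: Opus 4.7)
The plan is to decompose the symbol around its leading-order behaviour $\kappa/|n|$ via a Neumann-type expansion. Starting from the identity $\beta_n = \kappa/|n| - (b_n/|n|)\beta_n$ and iterating, one obtains, for any $N\in\N$,
\begin{align*}
\beta_n = \sum_{j=0}^{N-1}(-1)^j \frac{\kappa\, b_n^j}{|n|^{j+1}} + R_n^{(N)}, \qquad |R_n^{(N)}| \leq \frac{C_N}{|n|^{N(1-\gamma)+1}}.
\end{align*}
Since $\gamma\leq 1/2$, the decay exponent $N(1-\gamma)+1$ can be made arbitrarily large by taking $N$ large.

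The principal term $\textsf{OP}(\kappa/|n|)$ is, up to a multiplicative constant, the operator $(-\partial_\varphi^2)^{-1/2}$ acting on mean-zero periodic functions; its convolution kernel on $\T$ is a multiple of $\log|2\sin(\varphi/2)|$, and its distributional derivative is the kernel $\cot(\varphi/2)$ of the periodic Hilbert transform. Since the Hilbert transform is bounded on $C^{k,\alpha}(\T)$ for $\alpha\in(0,1)$, the fundamental theorem of calculus and the mean-zero normalisation yield at once that $\textsf{OP}(\kappa/|n|):C_0^{k,\alpha}(\T)\to C_0^{k+1,\alpha}(\T)$ is continuous. For the final remainder $\textsf{OP}(R^{(N)})$, choose $N$ so large that $N(1-\gamma)>k+1+\alpha$; then $\sum_{n}|n|^{k+1+\alpha}|R_n^{(N)}|<\infty$, so the convolution kernel belongs to $C^{k+1,\alpha}(\T)$ and the operator is bounded from $L^\infty(\T)$ into $C^{k+1,\alpha}(\T)$, in particular on our spaces.

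It remains to treat the intermediate symbols $\kappa\, b_n^j/|n|^{j+1}$ with $1\leq j\leq N-1$. The idea is to factor each as $\kappa\,|D|^{-1}(B|D|^{-1})^j$, where $B=\textsf{OP}(b_n)$, and to note that $B|D|^{-1}$ has symbol of magnitude $\mathcal{O}(|n|^{\gamma-1})=\mathcal{O}(|n|^{-1/2})$. The main obstacle lies here: only a pointwise size bound is assumed on $b_n$, with no control on discrete differences $b_{n+1}-b_n$, so the classical Mikhlin--Marcinkiewicz criteria for H\"older boundedness of Fourier multipliers do not apply directly. The plan is to exploit the strict smoothing gain $1-\gamma\geq 1/2$ carried by each factor $|D|^{-1}B$, combined with the one full derivative recovered by the leading $|D|^{-1}$, via bootstrapping in a Besov/Zygmund scale, so as to convert the summable decay of the composite symbol into genuine H\"older regularity of the kernel.
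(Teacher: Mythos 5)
Your Neumann-type expansion $\beta_n = \sum_{j=0}^{N-1}(-1)^j\kappa b_n^j/|n|^{j+1} + R_n^{(N)}$ is correct, and so are your treatments of the leading symbol $\kappa/|n|$ (via the periodic Hilbert transform and one integration) and of the tail $R^{(N)}$ (via absolute summability of the weighted Fourier coefficients once $N(1-\gamma)>k+1+\alpha$). The genuine gap, which you flag but do not close, is the intermediate symbols $\kappa\, b_n^j/|n|^{j+1}$ for $1\le j\le N-1$. These decay only like $|n|^{-(j(1-\gamma)+1)}$ --- for $j=1$ and $\gamma=1/2$ this is $|n|^{-3/2}$ --- which is not nearly enough to make $\sum_n|n|^{k+1+\alpha}|\kappa\, b_n^j/|n|^{j+1}|$ converge, and since nothing is assumed on the discrete differences $b_{n+1}-b_n$, the classical Marcinkiewicz--Mikhlin route for H\"older boundedness of multipliers is unavailable. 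Your proposed Besov/Zygmund bootstrap is a plan, not a proof; as written the argument is incomplete.

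The paper sidesteps the iterated expansion entirely with a one-step factorization. It writes $\beta_n = (\kappa/|n|)(1+r_n)$ with $r_n=-b_n/(|n|+b_n)$, hence $|r_n|\le C|n|^{\gamma-1}\le C|n|^{-1/2}$, and invokes \cite[Theorem 3.1]{Cardona2017HolderPeriodicPseudoDiff}: on the one-dimensional torus, a Fourier multiplier satisfying \emph{only} the size bound $|r_n|\le C|n|^{-1/2}$ (no difference conditions) is bounded on each Besov space $B^s_{\infty,\infty}(\T)$, hence on $C_0^{k,\alpha}(\T)$. This is precisely the ingredient you lack: on a dyadic annulus $|n|\approx 2^j$, Bernstein's inequality $\|g\|_{L^\infty}\le C2^{j/2}\|g\|_{L^2}$ combined with the $\ell^2$ multiplier bound $\sup_{|n|\approx 2^j}|r_n|\le C2^{-j/2}$ makes the blocks of $\textsf{OP}(r)$ uniformly bounded on $L^\infty$ with no symbol smoothness required. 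Once one has this fact, the $N$-term expansion is superfluous: $\textsf{OP}(\beta)=\textsf{OP}(\kappa/|n|)\circ\textsf{OP}(1+r)$, the first factor maps $C_0^{k,\alpha}\to C_0^{k+1,\alpha}$, the second is bounded on $C_0^{k,\alpha}$, and the lemma follows. Filling your gap at the intermediate terms would require establishing or citing this same pseudodifferential boundedness result, at which point your more elaborate decomposition buys nothing over the paper's single factorization.
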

	\begin{proof}
		Recall that the Hilbert transform $ \mathscr{H} $ defined by the Fourier multipliers $ -i\sgn(n) $ is a bounded map $ C_0^{k,\alpha}(\T)\to C_0^{k,\alpha}(\T) $ for all $ k\in \N $, $ \alpha\in (0,1) $. Since the operator with multiplier $ 1/in $ corresponds to integration, we conclude that the operator with multiplier $ 1/|n| = i\sgn(n) / in $ is a bounded map $ C_0^{k,\alpha}(\T)\to C_0^{k+1,\alpha}(\T) $.
		
		We now write
		\begin{align*}
			\beta_n = \dfrac{\kappa}{|n|} - \dfrac{\kappa}{|n|}\cdot \dfrac{b_n}{(|n|+b_n)} = \dfrac{\kappa}{|n|}\left( 1+r_n \right) .
		\end{align*}
		By assumption it holds $ c_1\leq |1+b_n/|n|| $ for some constant $ c_1>0 $. Hence, we have
		\begin{align*}
			|r_n| \leq \dfrac{|b_n|}{c_1|n|}\leq \dfrac{C}{|n|^{1-\gamma}}\leq \dfrac{C}{|n|^{1/2}}.
		\end{align*} 
		Thus, the sequence $ r=(r_n)_n $ satisfies the $ \rho $-condition in \cite[Theorem 3.1]{Cardona2017HolderPeriodicPseudoDiff} with $ \rho=1/2 $ and hence $ \textsf{OP}(r) $ constitutes a bounded map $ C_0^{k,\alpha}(\T)\to C_0^{k,\alpha}(\T) $ for all $ k\in\N $, $ \alpha\in(0,1) $. In the mentioned reference, periodic Besov space $ B^s_{\infty,\infty} $ have been used. Recall that $ B^s_{\infty,\infty} $ coincides with the classical H\"older space $ C^{k,\alpha}(\T) $ for $ s=k+\alpha\not\in\N $. This concludes the proof.
	\end{proof}
	\subsubsection*{Proof of Proposition \ref{pro:InvertLinOp}}
	
	We consider Case \eqref{CaseA} and Case \eqref{CaseB} simultaneously, since the proof is the same. Given $ (S,Z,M)\in \mathbb{Z}^{k+1,\alpha}= C^{k+1,\alpha}(\T)\times \R\times \R $ we want to solve for $ (g,b,\mu) \in H^{k+2,\alpha}_0\times \R\times \R$ the equations
	\begin{align}\label{eq:LinearizedSystem}
		\begin{split}
			\Lin g -\mu &= S,
			\\
			\Omega_0^2b-\partial_{x_1}^2U_0(a_0,0)b - W_{0,a_0}[g] &=Z,
			\\
			\pi\hat{g}_0 &= M.
		\end{split}
	\end{align}
	First, we have $ \hat{g}_0 = M/\pi $. For the first equation in \eqref{eq:LinearizedSystem} we decompose $ S $ in its Fourier coefficients $ (\hat{S}_n)_{n\in \Z} $. Then, the first equation in \eqref{eq:LinearizedSystem} becomes
	\begin{align*}
		\sum_{n\geq1} \omega_n \, \hat{g}_n e^{in\varphi}+\sum_{n\leq-1} \omega_n \, \overline{\hat{g}_{|n|}} e^{in\varphi} = \hat{S}_0- \dfrac{2\omega_0M}{\pi} + \mu +  \sum_{n\geq1}\hat{S}_{n}e^{in\varphi} + \sum_{n\leq-1}\overline{\hat{S}_{|n|}}e^{in\varphi}.
	\end{align*}
	Recall that $ \overline{\hat{S}_{-n}}=\hat{S}_{n} $ for $ n\geq0 $ since $ S $ is a real-valued function. We then choose $ \mu=2\omega_0M/\pi-\hat{S}_0  $. Since the multipliers $ \omega_n $ of $ \Lin $ are non-zero by assumption \eqref{eq:NonResonanceAssumption}, we can define $ \Lin^{-1}=OP(\omega_n^{-1}) $. By Lemmas \ref{lem:AsymptotcisCoeff} and \ref{lem:AsymptotcisCoeffGravPot} we can write 
	\begin{align*}
		\omega_n = \dfrac{|n|+b_n}{\kappa}, \quad \kappa^{-1}:= -\phi_0'(1)^2,
	\end{align*}
	with $ \sup_{n\neq0}|b_n||n|^{-\gamma}\leq C $ for any $ \gamma>0 $. Note that by our assumption in Theorem \ref{thm:MainThm} we also have $ \phi_0'(1)\neq0 $. We can hence apply Lemma \ref{lem:PeriodicPseudoDiffReg} yielding $ F\in C_0^{k+2,\alpha}(\T) $ defined by
	\begin{align*}
		F = OP(\omega_n^{-1}) (S-\hat{S}_0).
	\end{align*}
	Note that $ F $ is real-valued with $ \hat{F}_n= \hat{S}_n/\omega_n $ for $ n\geq 1 $.
	
	The function $ F $ is only defined on the torus. We now define the function $ g $ from $ F $ via
	\begin{align}\label{eq:ProofInvertibilityHolomorphicFunct}
		g(z) = \hat{g}_0z + \sum_{n\geq1} \hat{F}_n z^{n+1} = \dfrac{M}{\pi}z + \sum_{n\geq1} \dfrac{\hat{S}_n}{\omega_n} z^{n+1}.
	\end{align}
	We need to show that $ g\in H_0^{k+2,\alpha} $. To this end, define the function $ \tilde{F}:=\frac{1}{2}(I+\mathscr{H})F $, recalling that  $ \mathscr{H} $ denotes the Hilbert transform. The function $ \tilde{F} $ has the Fourier decomposition
	\begin{align*}
		\tilde{F}(\varphi) = \sum_{n\geq1} \hat{F}_n e^{in\varphi}, \quad \norm[C^{k+2,\alpha}(\T)]{\tilde{F}}\leq \norm[C^{k+2,\alpha}(\T)]{F}.
	\end{align*}
	The last inequality follows from the fact that $ \mathscr{H}: C^{k+2,\alpha}(\T)\to C^{k+2,\alpha}(\T) $ is bounded with $ \norm{\mathscr{H}}=1 $. Since $ \tilde{F} $ contains only Fourier modes $ n\geq0 $, there is a unique holomorphic extension in $ C^{k+2,\alpha}(\overline{\D}) $. This extension has the power series expansion 
	\begin{align*}
		\tilde{F}(z)  = \sum_{n\geq1} \hat{F}_n z^{n+1}.
	\end{align*}
	Consequently, the function $ g(z):=\hat{g}_0z + \tilde{F}(z) \in H^{k+2,\alpha}_0 $ satisfies \eqref{eq:ProofInvertibilityHolomorphicFunct} and hence also \eqref{eq:LinearizedSystem}.
	
	Finally, we determine $ b $ in \eqref{eq:LinearizedSystem}. To this end, we use the radial symmetry of the potential $ U_0 $, yielding
	\begin{align*}
		D_Y^2U_0(Y)= D_Y^2[U_0(|Y|)] = U_0'(|Y|) \left( \dfrac{|Y|^2I-Y\otimes Y}{|Y|^3} \right) + U_0''(|Y|) \dfrac{Y\otimes Y}{|Y|^3}.
	\end{align*}
	Hence, we need to solve, putting $ Y=X_0=(a_0,0) $ in the previous formula,
	\begin{align*}
		\left( \Omega_0^2-\dfrac{U_0''(a_0)}{a_0} \right) b = Z + W_{0,a_0}[g].
	\end{align*}
	At this point $ W_{0,a_0}[g] $ is a determined real number. From Lemma \ref{lem:PropertiesUnpertPot} we have $ U_0''(a_0)\leq 0 $. Thus, we can invert the above equation in terms of $ b $.
	
	The above arguments show that $ D_{(h,a,\lambda)}\F(0,a_0,\lambda_0,0) $ is one-to-one and onto. Hence, it is an isomorphism which concludes the proof.

	\section{Proof of Theorem \ref{thm:MainThm} and consequences}\label{sec:main:proof}
	In this last section, we first provide the proof of Theorem \ref{thm:MainThm}. We also include the details towards Corollary \ref{cor:MainResult} which is a direct consequence of the previous main result. 
	
	\subsubsection*{Proof of Theorem \ref{thm:MainThm}}
	Due to Proposition \ref{pro:MainProbFrechetDeriv} the function $ \F $ is continuously differentiable. Under assumption \eqref{eq:NonResonanceAssumption} we can invert the linearized operator $ D_{(h,a,\lambda)}\F(0,a_0,\lambda_0,0) $ by Proposition \ref{pro:InvertLinOp}. Hence, we can apply the implicit function theorem, see Lemma \ref{lem:IFT}. This concludes the proof.
	
	\subsubsection*{Proof of Corollary \ref{cor:MainResult}}
	For the sake of clarity we divide the proof into three steps.
	
	\paragraph{Step 1: Symmetry.} We first prove the symmetry of the domain $ E_h $. To this end, we show that the function $ g(z):=\overline{h(\overline{z}) }\in H^{k+2,\alpha}_0 $ satisfies $ \F(g,a,\lambda,m)=0 $.  Note that $ g $ induces a conformal map $ f_g $ which parameterizes the domain $ R(E_h) $, where $ R(x_1,x_2)=(x_1,-x_2) $. As a consequence of the uniqueness of solutions to \eqref{eq:StreamFunction}, the stream function satisfies $ \psi_g(x) = \psi_h(Rx) $. Furthermore, we have, recalling $ X=(a,0) $,
	\begin{align*}
		U_g(x) &=  U_{R(E_h)}(x) = U_{h}(Rx),
		\\
		U_X(x) &= U_X(Rx). 
	\end{align*}
	Since $ (h,a,\lambda,m) $ is a solution, we obtain from \eqref{eq:FreeBoundaryEq}, which is equivalent to the first component of $ \F $, and application of $ x\mapsto Rx $
	\begin{align*}
		\dfrac{1}{2}|\nabla^\perp\psi_g(x)|^2 - \dfrac{\Omega_0^2}{2}|x|^2 + U_g(x) +mU_X(x) = \lambda \quad x\in \partial E_g,
	\end{align*}
	The other components of $ \F(g,a,\lambda,m)=0 $ follow in the same manner. By the uniqueness statement of the implicit function theorem we have $ f_h(z)=f_g(z)=\overline{f_h(\overline{z})} $, i.e. the domain $ E_h $ is symmetric. 
	
	\paragraph{Step 2: Solution.} The symmetry of the domain $ E_h $ implies
	\begin{align*}
		\partial_{x_2}U_h(X) = 0 &= \Omega_0^2X_2.
	\end{align*}
	We can now define the non-hydrostatic pressure $ P $ as in \eqref{eq:ReconstructionPressure} and observe that all equations but the last one in the system \eqref{eq:IncompEulerPoisson} are satisfied for $ v=\nabla^\perp \psi_h $, $ X=(a,0) $ and $ P $.
	
	\paragraph{Step 3: Center of mass.}	We now show that the last equation in \eqref{eq:IncompEulerPoisson} is a consequence of the other equations in \eqref{eq:IncompEulerPoisson}. More precisely, they imply that the center of mass is zero
	\begin{align*}
		X_{c} := \dfrac{1}{\pi+m}\left( \int_{E_h}x\, dx +mX\right)  = 0.
	\end{align*}
	Combining the first equation and the fifth equation in \eqref{eq:IncompEulerPoisson} gives
	\begin{align*}
		(\pi+m)\, \Omega_0^2 \, X_{c} = \int_{E_h} \left( (v\cdot \nabla)v  + 2\Omega_0  Jv + \nabla P \right) \, dx + m\nabla U_h(X).
	\end{align*}
	Since $ (v\cdot \nabla)v = \divergence (v\otimes v) $ and $ v\cdot n_h=0 $ on $ \partial E_h $ the first term is zero. Furthermore, due to $ v=\nabla^\perp \psi_h= J\nabla\psi_h $ and $ \psi_h=0 $ on $ \partial E_h $ we have
	\begin{align*}
		\int_{E_h} Jv \, dx = -\int_{E_h} \nabla \psi_h \, dx = -\int_{\partial E_h} \psi_h \, n_h \, dS = 0.
	\end{align*}
	We have for the non-hydrostatic pressure 
	\begin{align*}
		\int_{E_h}\nabla P \, dx = \int_{E_h} P \, n_h \, dS = \int_{\partial E_h} \left( U_h+mU_X \right)  n_h \, dS,
	\end{align*}
	where we used the fourth equation in \eqref{eq:IncompEulerPoisson}. Furthermore, we have in Case \eqref{CaseA}
	\begin{align*}
		m\nabla U_h(X) = -m \int_{E_h} \nabla_y \left[ \dfrac{1}{|X-y|^\nu} \right] \, dy = -m \int_{\partial E_h} \dfrac{n_h}{|X-y|^\nu}\, dS(y) = -m\int_{\partial E_h} U_X \, n_h \, dS.
	\end{align*} 
	In Case \eqref{CaseB} we get a corresponding equality. This yields 
	\begin{align*}
		(\pi+m)\, \Omega_0^2 \, X_{c} =\int_{\partial E_h} U_h\,  n_h \, dS.
	\end{align*}
	By symmetry of the interaction potential we obtain in Case \eqref{CaseA}
	\begin{align*}
		\int_{\partial E_h} U_h\,  n_h \, dS = \int_{E_h} \nabla U_h(x) \, dx = \nu\int_{E_h}\int_{E_h} \dfrac{x-y}{|x-y|^{\nu+2}} \, dxdy = 0.
	\end{align*}
	However, this argument holds only for $ \nu<1 $ due to the singularity. For $ \nu=1 $ we use an approximation. The same conclusion holds in Case \eqref{CaseB}. This implies $ X_c=0 $, since $ \Omega_0\neq 0 $, which concludes the proof.
	
	\bibliographystyle{habbrv}
	\bibliography{References}
\end{document}